\numberwithin{equation}{section}
 \newtheorem{assumption}{Assumption}[section]
\newtheorem{lemma}{Lemma}[section]
\newtheorem{theorem}{Theorem}[section]
\newtheorem{definition}{Definition}[section]
\newtheorem{prop}{Proposition}[section]
\newtheorem{remark}{Remark}[section]
\newlength{\defbaselineskip}
\newcommand{\setlinespacing}[1]%
           {\setlength{\baselineskip}{#1 \defbaselineskip}}
\newcommand{\RR}{{\mathbb R}}
\newcommand{\NN}{{\mathbb N}}
\def\E{\mathbb{E}}
\def\P{\mathbb{P}}
\newcommand{\sG}{{\mathcal{G}}}
\newcommand{\sF}{{\mathcal{F}}}
\newcommand{\beql}[1]{\begin{equation}\label{#1}}
\newcommand{\eeq}{\end{equation}}
\newcommand{\beqal}[1]{\begin{eqnarray}\label{#1}}
\newcommand{\eeqa}{\end{eqnarray}}
\newcommand{\beq}{\begin{displaymath}}
\newcommand{\eeqno}{\end{displaymath}}
\newcommand{\bali}[1]{\begin{align}\label{#1}}
\newcommand{\eali}{\begin{align}}
\newcommand{\balino}{\begin{align*}}
\newcommand{\ealino}{\begin{align*}}
\newcommand{\ep}{\epsilon}
\newcommand{\Cov}{\text{\rm Cov}}
\newcommand{\bone}{{\mathbf 1}}
\newcommand{\qandq}{\quad\mbox{and}\quad}
\newcommand{\qasq}{\quad\mbox{as}\quad}
\newcommand{\qinq}{\quad\mbox{in}\quad}
\newcommand{\non}{\nonumber}
\newcommand{\RA}{\Rightarrow}
\newcommand{\baa}{\begin{eqnarray*}}
\newcommand{\eaa}{\end{eqnarray*}}
\newcommand{\ttl}{\Large Functional Limit Theorems for  Non-Markovian Epidemic Models}
\newcommand{\ttls}{\large Functional Limit Theorems for Non-Markovian Epidemic Models}
\begin{document}

\title[\ttls]{\ttl}

\author[Guodong \ Pang]{Guodong Pang}
\address{The Harold and Inge Marcus Department of Industrial and
Manufacturing Engineering,
College of Engineering,
Pennsylvania State University,
University Park, PA 16802 USA }
\email{gup3@psu.edu}

\author[{\'E}tienne \ Pardoux]{{\'E}tienne Pardoux}
\address{Aix--Marseille Universit{\'e}, CNRS, Centrale Marseille, I2M, UMR \ 7373 \ 13453\ Marseille, France}
\email{etienne.pardoux@univ.amu.fr}


\begin{abstract} 
We study non-Markovian stochastic epidemic models (SIS, SIR, SIRS, and SEIR),  in which the infectious (and latent/exposing, immune) periods have a general distribution. 
We provide a representation of  the evolution dynamics using the time epochs of infection (and latency/exposure, immunity). 
Taking the limit as the size of the population tends to infinity, we prove both a functional law of large number (FLLN) and a functional central limit theorem (FCLT) for the processes of interest in these models. 
In the FLLN, the limits are a unique solution to a system of deterministic Volterra integral equations, while in the FCLT, the limit processes are multidimensional Gaussian solutions of linear Volterra stochastic integral equations. In the proof of the FCLT, we provide an important Poisson random measures representation of the diffusion-scaled processes converging to  Gaussian components driving the limit process. 
\end{abstract}

\keywords{Non-Markovian epidemic models, general infectious periods, functional law of large numbers, functional central limit theorems, Poisson random measure representations}

\maketitle

\allowdisplaybreaks

\section{Introduction}

There have been extensive studies of Markovian epidemic models, including the SIS, SIR, SIRS and SEIR models, see, e.g., \cite{anderson1992infectious,andersson2012stochastic,britton2018stochastic} for an overview. 
Limited work has been done for non-Markovian epidemic models,  with general infectious periods, exposing and/or  immune periods, etc. 
Chapter 3 of \cite{britton2018stochastic} provides a good review of the existing literature on the non-Markovian closed epidemic models. 
There is a lack of  functional law of large numbers (FLLN)  and functional central limit theorems (FCLT) for non-Markovian epidemic models. 

In this paper we study some well known non-Markovian epidemic models, including SIR, SIS, SEIR and SIRS models. In all these models, the process counting the cumulative number of individuals becoming infectious is Poisson as usual with a rate depending on the susceptible and infectious populations. 
In the SIR and SIS models, the infectious periods are assumed to be i.i.d. with any general distribution.
 In the SEIR model, the exposing (latent) and infectious periods are assumed to be i.i.d. random vectors with a general joint distribution (correlation between these two periods for each individual is allowed).

We provide a general representation of the evolution dynamics in these epidemic models, by tracking the time epochs that each individual experiences.  
In the SIR model, each individual has two time epochs, times of becoming infectious and immune (recovered). In the SEIR model, each individual has three time epochs, times of becoming exposed (latent), infectious and immune (recovered). Then the process counting the number of infectious individuals can be simply represented by using these time epochs.

With these representations, we proceed to prove the FLLN and FCLT for these non-Markovian epidemic models. The results for the SIS model directly follow from those of the SIR model, and similarly, the results for the SIRS model follow from those of the SEIR model, so we focus on the studies of the SIR and SEIR models, and the results of the SIS and SIRS are stated without proofs. 
The fluid limits for these non-Markovian models are given as the unique deterministic solution to a system of Volterra integral equations.  We also analyze the equilibrium behaviors in the SIS and SIRS models (see Proposition \ref{prop-SIRS-equilibrium}). 
The limits in the FCLT are solutions of multidimensional linear Volterra stochastic integral equations driven by continuous Gaussian processes. 
These processes are, of course, non-Markovian, but if the initial quantities converge to Gaussian random variables, then the limit processes are
jointly Gaussian.
The Gaussian driving force comes from two independent components.
One corresponds to the initial quantities: in the SIR model, these are initially infected individuals and in the SEIR model, these are initially exposed and infected individuals. 
The other corresponds to the newly infected individuals in the SIR model, and the newly exposed individuals in the SEIR model. These are written as functionals of a white noise with two time dimensions (which can be also regarded as space--time white noise).  
Although the limit processes appear very different in the Markovian case, they are equivalent to the It{\^o} diffusion limit driven by Brownian motions, see, e.g., the proof of Proposition~\ref{prop-SIS-equiv-M} for the SIS model. 

In the proof of the FCLT for the SIR model, we construct a Poisson random measure (PRM) with mean measure depending on the distribution of the infectious periods, such that the diffusion-scaled processes corresponding to the Gaussian process driving the limit can be represented via integrals of white noises. This helps to establish tightness of these diffusion-scaled processes. 
For the SEIR model, the PRM has mean measure depending on the joint distribution of the exposing (latent) and infectious periods. 
It is worth observing the correspondence between the diffusion-scaled processes represented via the PRM and the functionals of the white noise mentioned above. The PRMs are also used to prove tightness in the FLLNs. 
These PRM representations may turn out to be useful for other studies in future work.

 This approach of describing the epidemic dynamics by tracking the ``event" times of each individual and then counting the number of individuals in each compartment with the associated event times, can be used to study many other epidemic models, for example, the SEIJR and SIDARTHE models studied in \cite{chowell2003sars,gumel2004modelling,giordano2020sidarthe}.
It is expected that the FLLN limits for all the compartments will be characterized by solutions to a set of integral equations, where the convolutions of the distribution functions for the durations in the relevant compartments will be used. Similarly, the FCLT limits for the compartments can be characterized by Gaussian-driven stochastic integral equations.

\subsection{Literature review}
The Markovian models, their limiting ODE LLN limit as well as the diffusion approximation of the fluctuations have been well studied in the literature, see the recent survey \cite{britton2018stochastic} and \cite{allen2017primer}.
 Note that a number of papers start from the ODE model, and make it stochastic by replacing some of the coefficients by stochastic processes, see, e.g., \cite{gray2011stochastic}; our work is not connected to this kind of models. 
One common approach to study non-Markovian epidemic models is by 
Sellke \cite{sellke1983asymptotic}. He provided a construction to define the epidemic outbreak in continuous time using two sets of i.i.d. random variables, with which one can find the distribution of the number of remaining uninfected individuals in an epidemic affecting a large population. 
Reinert \cite{reinert1995asymptotic} generalized Sellke's construction, and proved a deterministic limit (LLN) for the empirical measure describing the system dynamics of the generalized SIR model with the infection rate dependent upon time and state of infection, using Stein's method. From her result, we can derive the fluid model dynamics in Theorem \ref{thm-FLLN-SIR}; however, no FCLTs have been establish using her approach.  A deterministic integral equation for the SEIR model is provided in Chapter 4.5.1 of \cite{BCF-2019}; however, the expression for the infectious function $\bar{I}(t)$ is somewhat different from ours and no FLLN has been established.
 While revising the paper, we found the papers by Wang \cite{wang1975limit, wang1977gaussian} which proved an FLLN as well as a Gaussian limit for the SIR model with the infection rate dependent on the number of infectious individuals,  while assuming a somewhat different initial condition.  That paper \cite{wang1977gaussian} assumes a $C^1$ condition on the infectious distribution for the FCLT, while we have no restriction on this distribution. The proof approach in \cite{wang1975limit, wang1977gaussian} is also different from ours, without using PRMs.
For the SIS model with general infectious periods, without proving an FLLN, the Volterra integral equation was developed to describe the proportion of infectious population, see, e.g., \cite{brauer1975nonlinear,cooke1976epidemic,diekmann1977limiting,hethcote1995sis,van2000simple}. 

Ball \cite{ball1986unified} provided a unified approach to derive the distribution of the total size and total area under the  trajectory of infectives using a Wald's identity for the epidemic process. This was extended to multi-type epidemic models in \cite{ball1993final}.  See also the LLN and CLT results for the final size of the epidemic in \cite{britton2018stochastic}.  Barbour \cite{barbour1975duration} proved  limit theorems for the distribution of the time between the first infection and the last removal in the closed stochastic epidemic. 
See also Section 3.4 in \cite{britton2018stochastic}. 

Clancy \cite{clancy2014sir} recently proposed to view the non-Markovian SIR model as a piecewise Markov deterministic process, and  derived the joint distribution of the number of survivors of the epidemic and the area under the trajectory of infectives using martingales constructed from the piecewise deterministic Markov process. G{\'o}mez-Corral and L{\'o}pez-Garc{\'\i}a \cite{gomez2017sir} further study the piecewise deterministic Markov process in  \cite{clancy2014sir} and analyze the population transmission number and the infection probability of a given susceptible individual.

In a followup work, the LLN limit in the SEIR model has been applied to estimate the state of the Covid-19 pandemic in
\cite{FPP2021}, where statistical  methods are developed to estimate the (unobserved) parameters of the model with limited information during the early stages of the pandemic. It is shown that using ODE compartment models without accounting for the general distributions of the infectious durations may underestimate the basic reproduction number $R_0$. Similar observations are made in \cite{FodorKatzKovacs} where ODE models with delays, corresponding to our models with deterministic infectious periods,  are used to estimate $R_0$ in the early-phase of the Covid-19 pandemic.

It may be worth mentioning the connection with the infinite-server queueing literature. 
It may appear that the infectious process in the SIS or SIR model can be regarded as an infinite-server queue with a state-dependent arrival rate, and the infectious process in the SIRS or SEIR model can be regarded as a tandem infinite-server queue with a state-dependent arrival rate; however there are also delicate differences. See detailed discussions in Remark \ref{sec-SIR-queue} and Section \ref{sec-SEIR}. We refer to the study of $G/GI/\infty$ queues with general i.i.d. service times in \cite{krichagina1997heavy}, \cite{decreusefond2008functional},  \cite{pang2010two} and \cite{reed2015distribution}. In particular, the representation of the infectious population dynamics resembles those of the queueing process of the infinite-server queueing models. However, the results in queueing cannot be directly applied to the epidemic models. Given that the infection process is Poisson with a rate being a function of the infectious and susceptible population sizes, we take advantage of the representations of the epidemic evolution dynamics via Poisson random measures (PRM) and use important properties and results on PRMs and stochastic integrals with respect to PRMs to prove the functional limit theorems.

\subsection{Organization of the paper}
In Section \ref{sec-SIR}, we first describe the SIR model in detail, state the FLLN and the FCLT for the SIR model, and then state the results for the SIS model. This is followed by the studies of the SEIR and SIRS models in   Section \ref{sec-SEIR-SIRS}. 
The proofs of the FLLN and FCLT of the SIR model are given in Sections \ref{sec-SIR-FLLN-proof} and  \ref{sec-SIR-FCLT-proof}, respectively.  
We discuss the special cases of Markovian models in Section \ref{sec-Markovian} and models with deterministic durations in Section \ref{sec-deterministic}, and analyze the equilibrium of the SIS and SIRS models in Section \ref{sec-equilibrium}. 
Those for the SEIR model are then given in Sections \ref{sec-SEIR-FLLN-proof} and  \ref{sec-SEIR-FCLT-proof}.  
In the Appendix, we state the auxiliary result of a system of two linear Volterra equations, and also prove Proposition~\ref{prop-SIS-equiv-M}.

\subsection{Notation}
Throughout the paper, $\NN$ denotes the set of natural numbers, and $\RR^k (\RR^k_+)$ denotes the space of $k$-dimensional vectors
with  real (nonnegative) coordinates, with $\RR (\RR_+)$ for $k=1$.  For $x,y \in\RR$, denote $x\wedge y = \min\{x,y\}$ and $x\vee y = \max\{x,y\}$. 
Let $D=D([0,T], \RR)$ denote the space of $\RR$--valued c{\`a}dl{\`a}g functions defined on $[0,T]$. Throughout the paper, convergence in $D$ means convergence in the  Skorohod $J_1$ topology, see chapter 3 of \cite{billingsley1999convergence}. 
 Also, $D^k$ stands for the $k$-fold product equipped with the product topology.  In particular, for $x^n = (x^n_1,\dots, x^n_k)$ and $x=(x_1,\dots,x_k)$, $x^n\to x$ in $D^k$ if $x^n_i\to x_i$ in $D$ for each $i=1,\dots,k$. We write $D([0,T], \RR^k)$ to indicate the convergence in the Skorohod $J_1$ topology. 
 The difference between the topologies of $D([0,T], \RR^k)$ and $D^k$ is that in the first case the implied time-change is the same for all directions, unlike in the second case. See page 83 in \cite{whitt2002stochastic} for further discussions on these topologies.  
 Let $C$ be the subset of $D$ consisting of continuous functions.    Let $C^1$ consist of all differentiable functions whose derivative is continuous. 
 For any function $x\in D$, we use $\|x\|_T= \sup_{t\in [0,T]} |x(t)|$. For two functions $x,y \in D$, we use $x\circ y(t) = x(y(t))$ denote their composition.
 All random variables and processes are defined in a common complete probability space $(\Omega, \sF, \P)$. The notation $\RA$ means convergence in distribution. We use $\bone(\cdot)$ for indicator function. 
 
 \medskip

\section{SIR and SIS Models with general infectious period distributions}  \label{sec-SIR}

\subsection{SIR Model with general infectious periods}
In the SIR model, the population consists of susceptible, infectious and recovered (immune) individuals, where susceptible individuals get infected through interaction with infectious ones, and then experience an infectious period until becoming immune (no longer subject to infection). 
Let $n$ be the population size. Let $S^n(t)$,  $I^n(t)$ and $R^n(t)$ represent the susceptible, infectious and recovered individuals, respectively, at time $t\ge 0$. (The processes and random quantities are indexed by $n$ and we let $n\to\infty$ in the asymptotic analysis.) 
WLOG, assume that $I^n(0)>0$, $S^n(0) = n - I^n(0)$ and $R^n(0) = 0$, that is,
each individual is either infectious or susceptible at time $0$.

An individual $i$ going through the susceptible-infectious-recovered (SIR) process
has the following time epochs: $\tau^n_i$ and $\tau_i^n+\eta_i$, representing the times of becoming infected and immune, respectively. Here we assume that the infectious period distribution is independent of the population size.  
 For the individuals $I^n(0)$ that are infectious at time $0$, let $\eta_i^0$ be the remaining infectious period. 
Assume that the $\eta_i$'s are i.i.d. with c.d.f. $F$, and $\eta_i^0$ are also i.i.d. with c.d.f. $F_0$. Let $F^c=1-F$ and $F_0^c=1-F_0$.  
Let $\lambda$ be the rate at which infectious individuals infect susceptible ones.

  The infection process is generated by the contacts of infectious individuals with susceptible ones according to a Poisson process with rate $\lambda$. Here we assume a homogeneous population and each infectious contact is chosen uniformly at random among the susceptibles. 
Let $A^n(t)$ be the cumulative process of individuals that become infected by time $t$. Then 
we can express it as 
\begin{equation} \label{eqn-An-SIR}
A^n(t) = A_*\left( \lambda n \int_0^t  \frac{S^n(s)}{n} \frac{I^n(s)}{n} ds \right)
\end{equation}
where $A_*$ is a unit rate Poisson process. The process $A^n(t)$ has event times $\tau_i^n$, $i\in\NN$. 
Assume that $A_*$, $I^n(0)$, $\{\eta^0_i\}$ and $\{\eta_i\}$ are mutually independent. 

We first observe the following balance equations:
\begin{align}
n &=S^n(t)+I^n(t) + R^n(t), \non\\
 S^n(t) &= S^n(0) - A^n(t) = n- I^n(0) - A^n(t),  \non\\ 
 I^n(t) & = I^n(0) + A^n(t) - R^n(t), \non
\end{align}
for each $t\ge 0$. The dynamics of $I^n(t)$ is given by
\begin{align}\label{eqn-SIR-In}
I^n(t) = \sum_{j=1}^{I^n(0)} \bone(\eta^0_j > t)+ \sum_{i=1}^{A^n(t)} \bone(\tau^n_i + \eta_i >t), \quad t\ge 0.
\end{align}
Here the first term counts the number of individuals that are initially infected at time $0$ and remain infected at time $t$, and the second term counts the number of individuals that get infected between time $0$ and time $t$, and remain infected at time $t$. $R^n(t)$ counts the number of recovered individuals, and can be represented as 
\begin{align}
R^n(t) = \sum_{j=1}^{I^n(0)} \bone(\eta^0_j \le t)+ \sum_{i=1}^{A^n(t)} \bone(\tau^n_i + \eta_i \le t), \quad t\ge 0. \non 
\end{align}

\begin{remark}\label{sec-SIR-queue}
We remark that the dynamics of $I^n(t)$ resembles that of an $M/GI/\infty$ queue with a ``state-dependent" Poisson arrival process $A^n(t)$ and i.i.d. service times $\{\eta_i\}$ under the initial condition $(I^n(0), \{\eta_j^0\})$. However, the ``state-dependent" arrival rate $ \lambda n  \frac{S^n(s)}{n} \frac{I^n(s)}{n}$ not only depends on the infection (``queueing") state $I^n(t)$, but also upon the susceptible state $S^n(t)$. On the other hand, $S^n(t) = n-I^n(0) - A^n(t)$, so the ``state-dependent'' arrival rate is ``self-exciting" in some sense. 
\end{remark}

\begin{assumption} \label{AS-SIR-1}
There exists a deterministic constant $\bar{I}(0)\in (0,1)$ such that 
 $\bar{I}^n(0) \to \bar{I}(0)$ in probability in $\RR_+$ as $n\to\infty$. 
\end{assumption}

Define the fluid-scaled process $\bar{X}^n := n^{-1} X^n$ for any process $X^n$.

\begin{theorem}\label{thm-FLLN-SIR} 
Under Assumption \ref{AS-SIR-1}, the processes 
$$
(\bar{S}^n, \bar{I}^n, \bar{R}^n) \to ( \bar{S},  \bar{I}, \bar{R})\quad \text{in } D^3$$
in probability 
as $n\to\infty$, 
where the limit process $(\bar{S},\bar{I}, \bar{R})$ is the unique solution to the system of deterministic equations
\begin{align}
\bar{S}(t) &= 1- \bar{I}(0) - \lambda \int_0^t \bar{S}(s) \bar{I}(s) ds, \label{SIR-barS}\\
\bar{I}(t)  & =  \bar{I}(0) F_0^c(t) + \lambda \int_0^t F^c(t-s)  \bar{S}(s) \bar{I}(s) ds,  \label{SIR-barI}\\
\bar{R}(t) &= \bar{I}(0) F_0(t) + \lambda \int_0^t F(t-s) \bar{S}(s) \bar{I}(s) ds, \label{SIR-barR}
\end{align}
for $t\ge 0$.  $\bar{S}$ is in $C$.  If $F_0$ is continuous, then $\bar{I}$ and $\bar{R}$ are in $C$; otherwise, they are in $D$. 
\end{theorem}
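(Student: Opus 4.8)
The plan is to (i) establish existence, uniqueness and regularity for the limiting Volterra system; (ii) rewrite the prelimit quantities through a Poisson random measure; (iii) show the resulting fluctuation terms are negligible uniformly on $[0,T]$; and (iv) close a Gr\"onwall estimate comparing the prelimit to the fluid limit.

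First I would treat the deterministic system \eqref{SIR-barS}--\eqref{SIR-barR}. Equation \eqref{SIR-barS} gives $\bar S'=-\lambda\bar S\bar I$, so $\bar S(t)=(1-\bar I(0))\exp(-\lambda\int_0^t\bar I(u)\,du)$ is an explicit functional of $\bar I$; substituting into \eqref{SIR-barI} produces a single nonlinear Volterra equation for $\bar I$ whose integrand is bounded and Lipschitz in $\bar I$ on the relevant range and whose kernel $F^c$ is bounded by $1$. A contraction argument on $C([0,T_0],\RR)$ for small $T_0$, together with the a priori bounds $0\le\bar S,\bar I\le 1$ read off the equations, yields a global solution; uniqueness on $[0,T]$ follows from Gr\"onwall's inequality. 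The regularity assertions are then immediate: $\bar S\in C$ because $\bar S'=-\lambda\bar S\bar I$ with $\bar I$ bounded; $\int_0^t F^c(t-s)\bar S(s)\bar I(s)\,ds=\int_0^t F^c(u)\bar S(t-u)\bar I(t-u)\,du$ is continuous in $t$ by dominated convergence even when $F$ has atoms; and $\bar I(0)F_0^c(\cdot)$ is continuous iff $F_0$ is, with $\bar R=1-\bar S-\bar I$ inheriting the same property (one checks directly that $1-\bar S-\bar I$ equals the right-hand side of \eqref{SIR-barR}).

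Next, following the strategy announced in the Introduction, I would realize $A^n$ by thinning a Poisson random measure: let $Q$ be a PRM on $\RR_+^3$ with mean measure $\nu(ds\,du\,dv)=ds\,du\,dF(v)$ and set (well-defined since the process is pure-jump with finitely many jumps on bounded intervals)
\begin{equation*}
A^n(t)=\int_0^t\!\!\int_0^\infty\!\!\int_0^\infty \bone\bigl(u\le \lambda n\,\bar S^n(s^-)\bar I^n(s^-)\bigr)\,Q(ds,du,dv),
\end{equation*}
the $v$-marks playing the role of the infectious periods, so that
\begin{equation*}
I^n(t)=\sum_{j=1}^{I^n(0)}\bone(\eta^0_j>t)+\int_0^t\!\!\int_0^\infty\!\!\int_0^\infty \bone\bigl(u\le \lambda n\,\bar S^n(s^-)\bar I^n(s^-)\bigr)\bone(s+v>t)\,Q(ds,du,dv),
\end{equation*}
and similarly for $R^n$. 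Dividing by $n$ and compensating $Q$ by $\nu$, the second term of $\bar I^n(t)$ splits as $\lambda\int_0^t F^c(t-s)\bar S^n(s)\bar I^n(s)\,ds+\bar M^n_t(t)$, where for each fixed $t$ the map $r\mapsto\bar M^n_t(r):=n^{-1}\int_0^r\!\int_0^\infty\!\int_0^\infty \bone(u\le\lambda n\bar S^n(s^-)\bar I^n(s^-))\bone(s+v>t)\,(Q-\nu)(ds,du,dv)$ is a square-integrable martingale; analogous decompositions hold for $\bar A^n$ and $\bar R^n$ (with $\bar A^n$ even giving a genuine martingale in its upper limit). For the initial block, Assumption~\ref{AS-SIR-1} and the Glivenko--Cantelli theorem for the i.i.d.\ $\eta^0_j$ give $\sup_{t\le T}\bigl|n^{-1}\sum_{j=1}^{I^n(0)}\bone(\eta^0_j>t)-\bar I(0)F_0^c(t)\bigr|\to0$ in probability. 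For the martingale block, the predictable quadratic variation of $\bar M^n_t$ is bounded by $n^{-1}\lambda\int_0^t F^c(t-s)\bar S^n(s)\bar I^n(s)\,ds\le\lambda T/n$, so $\E[\bar M^n_t(t)^2]=O(1/n)$ for each fixed $t$; upgrading this to $\sup_{t\le T}|\bar M^n_t(t)|\to0$ in probability requires an extra argument because the integrand depends on $t$ through $\bone(s+v>t)$, and I would obtain it by using monotonicity of $t\mapsto\bone(s+v>t)$ to sandwich $\bar M^n_t(t)$ between values on a finite grid plus a small-interval error in $\bar I^n$, combined with Doob's $L^2$ maximal inequality on each grid martingale. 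Finally, with $\Delta^n(t):=\|\bar S^n-\bar S\|_t+\|\bar I^n-\bar I\|_t$ and $\bar R^n=1-\bar S^n-\bar I^n$, subtracting the Volterra equations from their prelimit analogues and using $F^c,F_0^c\le1$, $0\le\bar S^n,\bar I^n,\bar S,\bar I\le1$ and $|ab-cd|\le|a-c|+|b-d|$ for $a,b,c,d\in[0,1]$ gives $\Delta^n(t)\le\varepsilon^n+2\lambda\int_0^t\Delta^n(s)\,ds$, where $\varepsilon^n$ collects the initial and martingale errors and $\varepsilon^n\to0$ in probability; Gr\"onwall yields $\|\Delta^n\|_T\le\varepsilon^n e^{2\lambda T}\to0$ in probability, which is uniform convergence on $[0,T]$ and hence convergence in $D^3$ in probability, with the stated regularity of the limits (and which sidesteps a separate tightness argument).

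The main obstacle is the uniform-in-$t$ control of the Poisson-measure fluctuation $\sup_{t\le T}|\bar M^n_t(t)|$: because the kernel $F^c(t-s)$ entangles the integration variable with the output time $t$, the elementary per-$t$ second-moment bound does not by itself give a supremum bound, and one needs the PRM representation together with a monotonicity/chaining argument (essentially the tightness-type estimates alluded to in the Introduction). Everything else --- the deterministic theory, the Glivenko--Cantelli step, and the Gr\"onwall closure --- is routine once this is in hand.
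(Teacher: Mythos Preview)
Your proposal is correct and shares the key technical ingredient with the paper --- the PRM representation and the grid/monotonicity argument to control the time-dependent fluctuation uniformly on $[0,T]$ --- but the overall architecture differs. The paper proceeds by \emph{tightness and identification}: it first shows $\{(\bar A^n,\bar S^n)\}$ is tight (via the martingale $\hat M^n_A$), extracts a subsequential limit $\bar A$, proves $\breve I^n_1(t):=\int_0^t F^c(t-s)\,d\bar A^n(s)\to\int_0^t F^c(t-s)\,d\bar A(s)$ by the continuous mapping theorem, shows the centered fluctuation $V^n:=\bar I^n_1-\breve I^n_1\to 0$ uniformly (its Lemma~\ref{lem-Vn-SIR-conv}, the analogue of your $\sup_t|\bar M^n_t(t)|\to 0$), and only at the end identifies $\bar A=\lambda\int_0^\cdot\bar S\bar I$ and invokes uniqueness to pass from subsequences to the full sequence. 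By contrast, you establish the deterministic limit a~priori and run a direct Gr\"onwall comparison between prelimit and limit, which dispenses with tightness, subsequences, and the continuous-mapping step entirely. Your route is more elementary and yields uniform convergence on compacts straight away; the paper's route is the standard weak-convergence template and dovetails naturally with the FCLT proof that follows (where tightness must be argued anyway). Note also a small bookkeeping difference: the paper centers $\bar I^n_1$ by $\int_0^t F^c(t-s)\,d\bar A^n(s)$ (against the actual jump process), whereas you center by $\int_0^t F^c(t-s)\,d\bar\Lambda^n(s)$ (against the compensator); the two differ by $n^{-1/2}\int_0^t F^c(t-s)\,d\hat M^n_A(s)$, which is uniformly $o_P(1)$ by Doob's inequality and integration by parts, so this is harmless.
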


Define the diffusion-scaled processes 
\begin{align} \label{SIR-diff-def}
\hat{S}^n(t) & := \sqrt{n} \left( \bar{S}^n(t) - \bar{S}(t) \right) = \sqrt{n} \left( \bar{S}^n(t) - \left(1 - \bar{I}(0) -  \lambda \int_0^t \bar{S}(s) \bar{I}(s) ds \right) \right), \non\\
\hat{I}^n(t) &:=  \sqrt{n} \left(  \bar{I}^n(t)  -\bar{I}(t) \right) = \sqrt{n} \left(  \bar{I}^n(t) -  \bar{I} (0) F_0^c(t) - \lambda \int_0^t F^c(t-s)  \bar{S}(s) \bar{I}(s) ds \right),\non\\
\hat{R}^n(t) &:=  \sqrt{n} \left(  \bar{R}^n(t) -\bar{R}(t) \right) = \sqrt{n} \left(  \bar{R}^n(t) -  \bar{I} (0) F_0(t) - \lambda \int_0^t F(t-s)  \bar{S}(s) \bar{I}(s) ds \right). 
\end{align}
These represent the fluctuations around the fluid dynamics. 
Observe that 
\begin{equation} 
\hat{S}^n(t) + \hat{I}^n(t) + \hat{R}^n(t) = 0, \quad t\ge 0.  \non
\end{equation}

\begin{assumption} \label{AS-SIR-2}
There exist a deterministic constant $\bar{I}(0)\in (0,1)$ and a random variable $\hat{I}(0)$ such that 
 $\hat{I}^n(0):=\sqrt{n} (\bar{I}^n(0) - \bar{I}(0))  \RA \hat{I}(0) $ in $\RR$ as $n\to\infty$. In addition,  $\sup_n \E\big[\hat{I}^n(0)^2 \big]<\infty$ and thus by Fatou's lemma, $\E\big[\hat{I}(0)^2\big]<\infty$. 
\end{assumption}

 In the next statement, the process $(\hat{S},\hat{I})$ is the unique solution of the system of Volterra integral equations \eqref{SIR-Shat}, \eqref{SIR-Ihat}. Existence and uniqueness for such a system is well--known, see Lemma \ref{lem-Gamma-cont} below. Note that once we have $\hat{S}$ and $\hat{I}$, 
$\hat{R}$ is given by the formula \eqref{SIR-Rhat}.

 \begin{theorem} \label{thm-FCLT-SIR}
Under Assumption~\ref{AS-SIR-2}, the processes
 \begin{equation} \label{eqn-FCLT-conv-SIR}
(\hat{S}^n, \hat{I}^n, \hat{R}^n) \RA (\hat{S}, \hat{I}, \hat{R}) \qinq D^3 \qasq n \to\infty,
\end{equation}
where the limit $(\hat{S}, \hat{I}, \hat{R})$ is the unique solution to the following set of stochastic Volterra integral equations driven by Gaussian processes: 
\begin{align}\label{SIR-Shat}
\hat{S}(t)
&= -\hat{I}(0)  -   \lambda \int_0^t \left( \hat{S}(s) \bar{I}(s)+ \bar{S}(s)\hat{I}(s) \right) ds -  \hat{M}_A(t), 
\end{align}
\begin{equation}\label{SIR-Ihat}
\hat{I}(t) =  \hat{I}(0) F^c_0(t)   + \lambda \int_0^t F^c(t-s) \left( \hat{S}(s) \bar{I}(s) +\bar{S}(s)\hat{I}(s) \right) ds +  \hat{I}_{0}(t) +  \hat{I}_{1}(t),  
\end{equation}
\begin{equation}\label{SIR-Rhat}
\hat{R}(t) = \hat{I}(0) F_0(t)  + \lambda \int_0^t F(t-s) \left( \hat{S}(s) \bar{I}(s) +\bar{S}(s)\hat{I}(s) \right) ds + \hat{R}_{0}(t) +  \hat{R}_{1}(t), 
\end{equation}
with $\bar{S}(t)$ and $\bar{I}(t)$ given in Theorem \ref{thm-FLLN-SIR}.
Here $ (\hat{I}_{0},\hat{R}_0)$, independent of $\hat{I}(0)$, is a mean-zero two-dimensional Gaussian process with the covariance functions: for $t, t'\ge 0$,
 \begin{align*}
\Cov(\hat{I}_{0}(t), \hat{I}_{0}(t')) &= \bar{I}(0) (F_0^c(t\vee t') - F_0^c(t) F_0^c(t')), \\
\Cov(\hat{R}_{0}(t), \hat{R}_{0}(t')) &= \bar{I}(0) (F_0(t\wedge t') - F_0(t) F_0(t')), \\
\Cov(\hat{I}_0(t), \hat{R}_0(t') ) & = \bar{I}(0)\big[ (F_0(t') - F_0(t)) \bone(t'\ge t) - F_0^c(t) F_0(t') \big]. 
\end{align*}
If $F_0$ is continuous, then $ \hat{I}_{0}$ and $\hat{R}_0$ are continuous. 
The limit process
$ (\hat{M}_A, \hat{I}_1,\hat{R}_1)$,  is a continuous three-dimensional Gaussian process, independent of $(\hat{I}_0, \hat{R}_0, \hat{I}(0))$,  and has the representation
\begin{align}
\hat{M}_A(t) = W_F([0,t]\times[0,\infty)), \quad
 \hat{I}_1(t) = W_F([0,t]\times[t,\infty)), \quad
\hat{R}_1(t) = W_F([0,t]\times[0,t]),  \non
\end{align}
where  $W_F$ is
a Gaussian white noise process on $\RR_+^2$ with mean zero and 
$$\E \left[ W_F((a,b]\times (c,d])^2\right] = \lambda \int_a^b (F(d-s)-F(c-s)) \bar{S}(s) \bar{I}(s) ds,
$$
for $0 \le a \le b$ and $0 \le c \le d$. 
 The limit process $\hat{S}$ has continuous sample paths and $\hat{I}$ and $\hat{R}$ have  c{\`a}dl{\`a}g sample paths.  If the c.d.f. $F_0$ is continuous, then $\hat{I}$ and $\hat{R}$ have continuous sample paths. 
 If $\hat{I}(0)$ is a Gaussian random variable, then  $(\hat{S},\hat{I},\hat{R})$ is a Gaussian process. 
 \end{theorem}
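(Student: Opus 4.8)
The plan is to start from the exact decompositions of $\hat{S}^n$, $\hat{I}^n$, $\hat{R}^n$ obtained by combining the balance equations, the representation \eqref{eqn-SIR-In} of $I^n$, and the Poisson representation \eqref{eqn-An-SIR} of $A^n$, subtracting the fluid limit equations of Theorem \ref{thm-FLLN-SIR}, and rescaling by $\sqrt n$. For $\hat S^n$ this immediately gives $\hat S^n(t) = -\hat I^n(0) - \lambda\int_0^t \sqrt n\big(\bar S^n(s)\bar I^n(s) - \bar S(s)\bar I(s)\big)ds - \hat M_A^n(t)$, where $\hat M_A^n(t) := n^{-1/2}\big(A^n(t) - \lambda n\int_0^t \bar S^n(s)\bar I^n(s)ds\big)$ is the diffusion-scaled compensated Poisson term. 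For $\hat I^n$, I would split the initial-condition sum $\sum_{j\le I^n(0)}\bone(\eta_j^0>t)$ from its mean $I^n(0)F_0^c(t)$ to produce $\hat I_0^n$, and split the newly-infected sum $\sum_{i\le A^n(t)}\bone(\tau_i^n+\eta_i>t)$, conditionally on $A^n$ and the $\tau_i^n$, from its conditional mean $\sum_{i\le A^n(t)}F^c(t-\tau_i^n)$; the latter conditional mean, after subtracting $\lambda\int_0^t F^c(t-s)\bar S(s)\bar I(s)ds$, contributes the Volterra integral term plus an error $\hat M_A$-type term (via a change of variables $s=\tau_i^n$ and \eqref{eqn-An-SIR}), while the conditionally-centered part is $\hat I_1^n$. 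The analogous splitting gives $\hat R_0^n$ and $\hat R_1^n$. Thus one arrives at a closed stochastic system for $(\hat S^n,\hat I^n)$ of exactly the form \eqref{SIR-Shat}–\eqref{SIR-Ihat} but with an extra $o_P(1)$ remainder and with $\bar S^n,\bar I^n$ in place of $\bar S,\bar I$ inside the quadratic term.

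The heart of the argument is the joint convergence $(\hat M_A^n, \hat I_1^n, \hat R_1^n, \hat I_0^n, \hat R_0^n, \hat I^n(0)) \RA (\hat M_A,\hat I_1,\hat R_1,\hat I_0,\hat R_0,\hat I(0))$. For the $(\hat I_0^n,\hat R_0^n)$ component, finite-dimensional convergence to the stated Gaussian law follows from the multivariate CLT applied to the i.i.d. $\pm$ increments $\bone(\eta_j^0>t)-F_0^c(t)$, $\bone(\eta_j^0\le t)-F_0(t)$ (using Assumption \ref{AS-SIR-1} for the number of summands $I^n(0)\approx n\bar I(0)$), and tightness in $D$ is the standard empirical-process tightness; independence from $\hat I(0)$ and from the $A^n$-driven terms comes from the independence assumptions on $A_*,I^n(0),\{\eta_j^0\},\{\eta_i\}$. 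The main obstacle is the $(\hat M_A^n,\hat I_1^n,\hat R_1^n)$ triple: here I would introduce, exactly as announced in the introduction, a Poisson random measure on $\RR_+^2$ (time of infection $\times$ infectious period) built from $A_*$ and the marks $\{\eta_i\}$ — precisely, $Q^n = \sum_i \delta_{(\tau_i^n,\eta_i)}$ viewed through the Poisson structure of $A^n$ — so that $\hat M_A^n$, $\hat I_1^n$, $\hat R_1^n$ become $n^{-1/2}$ times integrals of indicator functions $\bone_{[0,t]\times[0,\infty)}$, $\bone_{[0,t]\times[t,\infty)}$, $\bone_{[0,t]\times[0,t]}$ against the compensated PRM $\tilde Q^n$. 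Convergence of these stochastic integrals to the white-noise integrals driven by $W_F$ then follows from a functional CLT for integrals against compensated PRMs, with the variance/covariance computed from the mean measure $\lambda n\,\bar S^n(s)\bar I^n(s)\,ds\,dF(u)$; the covariance formula for $W_F$ is read off directly. Tightness of each of the three processes in $D$ is obtained from moment bounds on increments of these PRM-integrals (a fourth-moment or $L^2$-with-two-time-points estimate), which is exactly where the PRM representation pays off — the increments are integrals over disjoint regions, so their moments are controlled by the mean measure without any smoothness assumption on $F$ or $F_0$.

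Having established the joint convergence of the driving terms and their mutual independence (the PRM-driven block is independent of the $\eta^0$-block and of $\hat I(0)$ by construction), I would close the argument with a continuous-mapping / Volterra-inversion step: Lemma \ref{lem-Gamma-cont} (the auxiliary result on systems of two linear Volterra equations) provides existence, uniqueness, and continuity of the solution map sending the data $(\hat I(0)F_0^c, \hat I_0+\hat I_1, \hat M_A)$ and the coefficients $(\bar S,\bar I)$ to $(\hat S,\hat I)$; applying it to the prelimit system and using $\bar S^n\to\bar S$, $\bar I^n\to\bar I$ (Theorem \ref{thm-FLLN-SIR}) together with a Gronwall estimate to absorb the $o_P(1)$ remainder and the discrepancy between $\bar S^n\bar I^n$ and $\bar S\bar I$, one gets $(\hat S^n,\hat I^n)\RA(\hat S,\hat I)$; then $\hat R^n = -\hat S^n-\hat I^n \RA -\hat S-\hat I = \hat R$ via \eqref{SIR-Rhat}. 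The path regularity claims (continuity of $\hat S$; continuity of $\hat I,\hat R$ when $F_0$ is continuous, càdlàg otherwise) follow from the corresponding regularity of $\hat I_0,\hat R_0$ and of the Volterra integral terms. Finally, if $\hat I(0)$ is Gaussian, then the whole driving vector is Gaussian and the solution map is linear (affine in the data), so $(\hat S,\hat I,\hat R)$ is a Gaussian process. I expect the delicate points to be: (i) the precise bookkeeping that turns the conditional mean $\sum_i F^c(t-\tau_i^n)$ into the Volterra term plus a controllable error, and (ii) verifying the hypotheses of the PRM functional CLT and the attendant tightness uniformly in $n$, since $\bar S^n\bar I^n$ inside the mean measure is itself random and must be replaced by its fluid limit at a cost that Gronwall can swallow.
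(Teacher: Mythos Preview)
Your proposal is correct and follows essentially the same architecture as the paper's proof: the same decomposition into $\hat I^n(0)F_0^c$, $\hat I_0^n$, $\hat I_1^n$ (and analogously for $\hat R^n$), the PRM representation for $(\hat M_A^n,\hat I_1^n,\hat R_1^n)$, the empirical-process CLT for $(\hat I_0^n,\hat R_0^n)$, and the closing via Lemma~\ref{lem-Gamma-cont}. Two execution details differ from the paper and are worth flagging. First, the paper centers the newly-infected sum by its \emph{compensator} $\lambda\sqrt n\int_0^t F^c(t-s)\bar S^n(s)\bar I^n(s)\,ds$ rather than by the conditional mean $\frac{1}{\sqrt n}\sum_i F^c(t-\tau_i^n)$; your choice introduces an extra $\int_0^t F^c(t-s)\,d\hat M_A^n(s)$ term, which is harmless but requires separate handling. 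Second, and more substantively, for tightness of $(\hat M_A^n,\hat I_1^n,\hat R_1^n)$ the paper does \emph{not} use direct fourth-moment bounds on increments. Instead it first replaces the random intensity $\bar S^n\bar I^n$ by the deterministic $\bar S\bar I$ (this step needs a priori uniform bounds $\sup_n\E[\sup_{t\le T}|\hat S^n(t)|^2]<\infty$, $\sup_n\E[\sup_{t\le T}|\hat I^n(t)|^2]<\infty$, proved via the Dvoretzky--Kiefer--Wolfowitz inequality and Gronwall), and then exploits the observation that the resulting $\widetilde M_A^n$ and $\widetilde R_1^n$ are square-integrable martingales with respect to \emph{different} filtrations $\sG^A_t$ and $\sG^R_t$; each is therefore tight by a martingale FCLT, and $\widetilde I_1^n=\widetilde M_A^n-\widetilde R_1^n$ is tight by difference. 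Your claim that ``the increments are integrals over disjoint regions'' is correct for $\hat R_1^n$ but not for $\hat I_1^n$ (the region $[0,t]\times[t,\infty)$ is not monotone in $t$), and a direct prelimit fourth-moment bound picks up an $O(\delta/n)$ Poisson-cumulant term that must be controlled; the martingale route sidesteps this. Finite-dimensional convergence and continuity of the limits are then obtained in the paper exactly as you sketch, via the characteristic-function identity for compensated PRMs and the Gaussian relation $\E[X^4]=3(\E[X^2])^2$.
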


 \begin{remark}
 From the representation of the limit processes $ (\hat{M}_A, \hat{I}_1,\hat{R}_1)$ using the white noise $W_F$, we easily obtain 
their covariance functions: for  $t, t'\ge 0$, 
$$
\Cov(\hat{M}_A(t), \hat{M}_A(t'))  = \lambda \int_0^{t\wedge t'}  \bar{S}(s) \bar{I}(s) ds, \quad
\Cov(\hat{I}_1(t), \hat{I}_1(t')) =  \lambda \int_0^{t\wedge t'} F^c(t\vee t'-s) \bar{S}(s) \bar{I}(s) ds, \non
$$
$$
\Cov(\hat{R}_1(t), \hat{R}_1(t')) = \lambda \int_0^{t\wedge t'} F(t\wedge t'-s) \bar{S}(s) \bar{I}(s) ds, \quad
\Cov(\hat{M}_A(t), \hat{I}_1(t'))  = \lambda \int_0^{t\wedge t'} F^c(t'-s) \bar{S}(s) \bar{I}(s) ds,\non
$$
$$
\Cov(\hat{M}_A(t), \hat{R}_1(t')) = \lambda \int_0^{t\wedge t'} F(t'-s) \bar{S}(s) \bar{I}(s) ds, $$$$
\Cov(\hat{I}_1(t), \hat{R}_1(t')) = \lambda \int_0^{t} (F(t'-s) - F(t-s)) \bone(t'>t)  \bar{S}(s) \bar{I}(s) ds.  \non
$$
 \end{remark}

\begin{remark}
The approach in this paper can be slightly modified to allow the rate $\lambda$ to be non-stationary $\lambda(t)$.  In epidemic models, a non-stationary $\lambda(t)$ can represent seasonal effects. 
The process $A^n$ is written as
$$
A^n(t) = A_*\left(  n \int_0^t \lambda(s) \frac{S^n(s)}{n} \frac{I^n(s)}{n} ds \right).
$$
For the SIR model, the fluid equation for $\bar{I}$ becomes 
$$
\bar{I}(t)  =  \bar{I}(0) F_0^c(t) + \int_0^t \lambda(s) F^c(t-s)  \bar{S}(s) \bar{I}(s) ds,
$$
and the FCLT limit $\hat{I}$ becomes
$$
\hat{I}(t) =  \hat{I}(0) F^c_0(t)   +  \int_0^t \lambda(s) F^c(t-s) \left( \hat{S}(s) \bar{I}(s) +\bar{S}(s)\hat{I}(s) \right) ds +  \hat{I}_{0}(t) +  \hat{I}_{1}(t), \quad t \ge 0, 
$$
where $\hat{I}_0(t)$ is the same as in the stationary case, and $\hat{I}_1(t)$ has covariance function 
$$
\Cov(\hat{I}_1(t), \hat{I}_1(t'))  =   \int_0^{t\wedge t'} \lambda(s) F^c(t\vee t'-s) \bar{S}(s) \bar{I}(s) ds, \quad t, t \ge 0. 
$$
The same applies to the other processes, and the study of other models. 
\end{remark}

\subsection{SIS Model with general infectious periods} \label{sec-SIS}

In the SIS model, individuals become susceptible immediately after they go through the infectious periods. With a population of size $n$, we have $S^n(t) + I^n(t) =n$ for all $t\ge 0$.
The cumulative infectious process $A^n$ has the same expression \eqref{eqn-An-SIR} as in the SIR model. 
Suppose that there are initially $I^n(0)$ infectious individuals whose remaining infectious times are $\eta_j^0$, $j=1,\dots, I^n(0)$, and each individual that become infectious after time $0$ has infectious periods $\eta_i$, corresponding to the infectious time $\tau_i^n$ of $A^n$. We use $F_0$ and $F$ for the distributions of $\eta^0_j$ and $\eta_i$, respectively. 
Then the dynamics of $I^n$ has the same representation  \eqref{eqn-SIR-In} as in the SIR model.
The only difference is that $S^n(0) = n - I^n(0)$ and $S^n(t) = n - I^n(t)$ so that the dynamics of $(S^n,I^n)$ is determined by the one-dimensional process $I^n$.  
Thus we will focus on the process $I^n$ alone. 
We will impose the same condition as in Assumption \ref{AS-SIR-1}. Define the fluid-scaled process $\bar{I}^n = n^{-1}I^n$. 
\begin{theorem}
Under Assumption \ref{AS-SIR-1}, 
$\bar{I}^n \to  \bar{I} $ in $D$
in probability as $n\to\infty$, 
where  
\begin{align} \label{SIS-barI}
\bar{I}(t) =   \bar{I}(0) F_0^c(t) + \lambda \int_0^t F^c(t-s) (1-  \bar{I}(s) ) \bar{I}(s) ds, \quad t\ge 0. 
\end{align}
$\bar{I} \in D$; if  $F_0$ is continuous, then $\bar{I} \in C$. 
\end{theorem}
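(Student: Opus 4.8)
The plan is to reproduce the proof of Theorem~\ref{thm-FLLN-SIR} (carried out in Section~\ref{sec-SIR-FLLN-proof}) essentially verbatim, the only structural change being that the susceptible fraction entering \eqref{eqn-An-SIR} and \eqref{eqn-SIR-In} is now $\bar S^n(s)=1-\bar I^n(s)$ instead of $1-\bar I^n(0)-\bar A^n(s)$, so that the two coupled equations \eqref{SIR-barS}--\eqref{SIR-barI} degenerate to the single scalar Volterra equation \eqref{SIS-barI}. Concretely, I would proceed in five steps: (i) rewrite the dynamics \eqref{eqn-SIR-In} through a Poisson random measure; (ii) extract from it an approximate Volterra equation for $\bar I^n$ with an error term that vanishes uniformly on $[0,T]$; (iii) prove tightness of $\{\bar I^n\}$ in $(D,J_1)$; (iv) identify every subsequential limit as a solution of \eqref{SIS-barI}; and (v) conclude by uniqueness for \eqref{SIS-barI} (convergence in distribution to a constant being equivalent to convergence in probability).

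For step (i), as in the SIR model take a Poisson random measure $Q$ on $\RR_+^3$ with mean measure $\D s\,\D u\,F(\D x)$, independent of $I^n(0)$ and $\{\eta_j^0\}$, whose third coordinate records the infectious period of each successive infection, chosen so that
\begin{equation*}
A^n(t)=\int_0^t\!\!\int_{\RR_+}\bone\{u\le\lambda n(1-\bar I^n(s-))\bar I^n(s-)\}\,Q(\D s,\D u,\RR_+)
\end{equation*}
has the law prescribed by \eqref{eqn-An-SIR}, and then \eqref{eqn-SIR-In} becomes
\begin{equation*}
\bar I^n(t)=\frac1n\sum_{j=1}^{I^n(0)}\bone(\eta^0_j>t)+\frac1n\int_0^t\!\!\int_{\RR_+}\!\!\int_{\RR_+}\bone\{u\le\lambda n(1-\bar I^n(s-))\bar I^n(s-)\}\,\bone(s+x>t)\,Q(\D s,\D u,\D x).
\end{equation*}
Compensating $Q$ by its mean measure and integrating out the $u$ and $x$ coordinates gives, for step (ii),
\begin{equation*}
\bar I^n(t)=\bar I^n(0)\,\bar F_0^{\,n}(t)+\lambda\int_0^t F^c(t-s)\,(1-\bar I^n(s))\,\bar I^n(s)\,\D s+\widetilde M^n(t),
\end{equation*}
where $\bar F_0^{\,n}(t):=(I^n(0))^{-1}\sum_j\bone(\eta_j^0>t)$ is the empirical residual survival function and $\widetilde M^n$ is the integral against the compensated measure $\bar Q$. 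Since the integrand is bounded by $n^{-1}$ and the intensity is at most $\lambda n/4$ on $[0,T]$, the same second-moment and maximal estimates as in the SIR proof give $\|\widetilde M^n\|_T=O_{\P}(n^{-1/2})$, and the law of large numbers (Glivenko--Cantelli) together with Assumption~\ref{AS-SIR-1} gives $\bar I^n(0)\bar F_0^{\,n}\to\bar I(0)F_0^c$ in $(D,J_1)$ in probability.

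For step (iii), every jump of $\bar I^n$ has size $n^{-1}$, while $t\mapsto\lambda\int_0^t F^c(t-s)(1-\bar I^n(s))\bar I^n(s)\,\D s$ is equicontinuous in $n$ uniformly over all $[0,1]$-valued integrands, its increments over $[t,t']$ being bounded by $|t'-t|+\int_0^\infty|F(r+|t'-t|)-F(r)|\,\D r\to0$; hence $\bar I^n$ is the sum of a term converging in $D$ to the fixed function $\bar I(0)F_0^c$, a $C$-tight term, and an $o_{\P}(1)$ term, so $\{\bar I^n\}$ is tight in $(D,J_1)$. For step (iv), along a subsequence with $\bar I^n\to y$, dominated convergence (using $\bar I^n(s)\to y(s)$ at all but countably many $s$) shows $\int_0^t F^c(t-s)(1-\bar I^n(s))\bar I^n(s)\,\D s\to\int_0^t F^c(t-s)(1-y(s))y(s)\,\D s$, so $y$ satisfies \eqref{SIS-barI}. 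For step (v), \eqref{SIS-barI} has a unique $D([0,T],[0,1])$-valued solution: writing $\Phi(y)$ for its right-hand side, $|x(1-x)-y(1-y)|\le|x-y|$ on $[0,1]$ and $F^c\le1$ yield $\|\Phi(y_1)-\Phi(y_2)\|_t\le\lambda\int_0^t\|y_1-y_2\|_s\,\D s$, so Gronwall's inequality gives uniqueness (existence is by Picard iteration, or simply from the tightness just established). Therefore the full sequence $\bar I^n$ converges to $\bar I$ in probability; and since $\int_0^t F^c(t-s)(1-\bar I(s))\bar I(s)\,\D s$ is continuous in $t$ while $\bar I(0)F_0^c$ has exactly the discontinuities of $F_0$, $\bar I\in C$ when $F_0$ is continuous and $\bar I\in D$ in general.

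The main obstacle is the self-referential (``self-exciting'') structure already noted in Remark~\ref{sec-SIR-queue}: the intensity driving $A^n$ depends on $\bar I^n$, so the continuous mapping theorem cannot be applied directly, and the circular dependence can be broken only after tightness and the vanishing of the PRM error $\widetilde M^n$ are in hand, at which point uniqueness for \eqref{SIS-barI} pins down the limit. Within that programme the one genuinely technical point is the uniform-in-$t$ control of $\widetilde M^n$: for fixed $t$ it is the terminal value of a martingale in the $s$-variable, but the family $\{\widetilde M^n(t)\}_{t\ge0}$ is not itself a martingale (the factor $\bone(s+x>t)$ couples the two time arguments), and this is exactly where the Poisson-random-measure representation is used, reducing the bound via a maximal estimate for the associated two-parameter martingale to a second-moment computation against the deterministic mean measure of $Q$. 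All of these estimates are identical to those in the SIR proof, with $\bar S^n$ replaced by $1-\bar I^n$ and with the a priori bound $0\le\bar I^n\le1$ (immediate, since $I^n(t)\le n$) playing the role formerly played by nonnegativity of $\bar S^n$.
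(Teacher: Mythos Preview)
Your proposal is correct and follows essentially the same route as the paper, which does not give a separate proof for the SIS theorem but states that it follows directly from the SIR argument in Section~\ref{sec-SIR-FLLN-proof} upon replacing $\bar S^n$ by $1-\bar I^n$. Your decomposition differs only cosmetically: the paper splits the error into $n^{-1/2}\hat M_A^n$ (the martingale part of $\bar A^n-\bar\Lambda^n$) and $V^n=\bar I_1^n-\breve I_1^n$ (Lemma~\ref{lem-Vn-SIR-conv}), whereas you merge both into a single compensated-PRM term $\widetilde M^n$; the chaining/maximal estimate you invoke for $\sup_t|\widetilde M^n(t)|$ is exactly the content of Lemma~\ref{lem-Vn-SIR-conv}, and your equicontinuity bound for the convolution term is the computation in~\eqref{deltaVn-bound-p3}.
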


Define the diffusion-scaled process $\hat{I}^n=\sqrt{n}(\bar{I}^n-\bar{I})$. Then we have the following FCLT.

\begin{theorem}
Under Assumptions \ref{AS-SIR-2},  $\hat{I}^n \RA  \hat{I}$ in  $D$ as $n \to \infty$,
where  
\begin{equation}\label{SIS-Ihat}
\hat{I}(t) =  \hat{I}(0) F^c_0(t)   + \lambda \int_0^t F^c(t-s)   (1- 2 \bar{I}(s))   \hat{I}(s) ds +  \hat{I}_{0}(t) +  \hat{I}_{1}(t), \quad t \ge 0, 
\end{equation}
where $ \hat{I}_{0}(t)$ is a mean-zero Gaussian process with the covariance function 
$$
\Cov(\hat{I}_{0}(t), \hat{I}_{0}(t')) = \bar{I}(0) (F_0^c(t\vee t') - F_0^c(t) F_0^c(t')), \quad t, t' \ge 0, 
$$
and $\hat{I}_1(t)$ is a continuous mean-zero Gaussian process with covariance function
$$
\Cov(\hat{I}_{1}(t), \hat{I}_{1}(t')) =  \lambda \int_0^{t\wedge t'} F^c(t\vee t'-s) (1-\bar{I}(s)) \bar{I}(s) ds, \quad t, t' \ge 0. 
$$
$\hat{I}(0)$, $ \hat{I}_{0}(t)$ and $\hat{I}_1(t)$ are mutually independent. $\hat{I}$ has c{\`a}dl{\`a}g sample paths;
if $F_0$ is continuous, then $ \hat{I}_{0}(t)$  is continuous and thus, $\hat{I}$ has continuous sample paths. 
If $\hat{I}(0)$ is a Gaussian random variable, then $\hat{I}$ is a Gaussian process. 
\end{theorem}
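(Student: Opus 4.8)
The plan is to imitate the proof of Theorem~\ref{thm-FCLT-SIR}, exploiting the fact that the representation \eqref{eqn-SIR-In} of $I^n$ and the formula \eqref{eqn-An-SIR} for the cumulative infection process $A^n$ are the very same in the SIS and SIR models; the sole structural difference is that here $S^n(t)=n-I^n(t)$, so that $\bar S^n=1-\bar I^n$, $\bar S=1-\bar I$, and, after centering at the fluid limit, $\hat S^n=-\hat I^n$. In particular the two-dimensional SIR fixed point for $(\hat S,\hat I)$ degenerates to a single Volterra equation for $\hat I$. Concretely, I would start from the Poisson random measure construction of Section~\ref{sec-SIR-FCLT-proof}: realize $A^n$ together with its i.i.d.\ marks $\{\eta_i\}$ by thinning a PRM $Q$ on $\RR_+^3$ with mean measure $n\,ds\,du\,F(d\zeta)$, so that
\[
\sum_{i=1}^{A^n(t)}\bone(\tau^n_i+\eta_i>t)=\int_{[0,t]\times\RR_+\times\RR_+}\bone\bigl(u\le\lambda\bar S^n(s^-)\bar I^n(s^-)\bigr)\,\bone(s+\zeta>t)\,Q(ds,du,d\zeta).
\]
Subtracting the compensator $\lambda n\int_0^t F^c(t-s)\bar S^n(s)\bar I^n(s)\,ds$, handling the conditionally binomial term $\sum_j\bone(\eta^0_j>t)$ as in the SIR proof, and centering by $\bar I(t)=\bar I(0)F_0^c(t)+\lambda\int_0^t F^c(t-s)(1-\bar I(s))\bar I(s)\,ds$, I obtain the pre-limit identity
\[
\hat I^n(t)=\hat I^n(0)F_0^c(t)+\hat I_0^n(t)+\hat I_1^n(t)+\lambda\sqrt n\int_0^t F^c(t-s)\bigl(\bar S^n(s)\bar I^n(s)-\bar S(s)\bar I(s)\bigr)\,ds+o_P(1)
\]
uniformly on $[0,T]$, where $\hat I_0^n(t)=n^{-1/2}\sum_{j=1}^{I^n(0)}(\bone(\eta^0_j>t)-F_0^c(t))$ and $\hat I_1^n$ is the $n^{-1/2}$-scaled integral of $\bone(u\le\lambda\bar S(s)\bar I(s))\bone(s+\zeta>t)$ against the compensated measure $Q-n\,ds\,du\,F(d\zeta)$.

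I would then use the SIS-specific algebra $\bar S^n\bar I^n-\bar S\bar I=(1-\bar I^n)\bar I^n-(1-\bar I)\bar I=(\bar I^n-\bar I)(1-\bar I^n-\bar I)$, so that $\sqrt n(\bar S^n\bar I^n-\bar S\bar I)=(1-\bar I^n-\bar I)\,\hat I^n$, and invoke the FLLN for the SIS model ($\bar I^n\to\bar I$ in probability, uniformly on $[0,T]$) to see that the coefficient $1-\bar I^n-\bar I$ converges uniformly to $1-2\bar I$. Hence $\hat I^n$ satisfies the linear Volterra equation $\hat I^n(t)=\Phi^n(t)+\lambda\int_0^t F^c(t-s)(1-\bar I^n(s)-\bar I(s))\hat I^n(s)\,ds$ with free term $\Phi^n:=\hat I^n(0)F_0^c+\hat I_0^n+\hat I_1^n+o_P(1)$. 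It then remains to prove the joint convergence $\Phi^n\RA\hat I(0)F_0^c+\hat I_0+\hat I_1$ in $D$ and to pass to the limit in the Volterra equation. The convergence $\hat I_0^n\RA\hat I_0$ is the functional CLT for empirical processes indexed by $t\in[0,T]$ with a random sample size $I^n(0)$ whose scaled value converges to $\bar I(0)$, which yields the stated covariance and, when $F_0$ is continuous, continuity of $\hat I_0$; the convergence $\hat I_1^n\RA\hat I_1$ follows from the CLT for stochastic integrals against a compensated PRM, equivalently from the space--time white-noise representation $\hat I_1(t)=W_F([0,t]\times[t,\infty))$ with $\E[W_F((a,b]\times(c,d])^2]=\lambda\int_a^b(F(d-s)-F(c-s))(1-\bar I(s))\bar I(s)\,ds$, from which continuity of $\hat I_1$ follows by a Kolmogorov-type estimate. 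Mutual independence of $\hat I(0)$, $\hat I_0$ and $\hat I_1$ in the limit, together with the asymptotic independence of $\hat I_0^n$ from $\hat I^n(0)$ despite both depending on $I^n(0)$, is inherited from the independence of $I^n(0)$, $\{\eta^0_j\}$, $\{\eta_i\}$ and $A_*$, exactly as in the SIR proof.

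For the last step I would invoke the continuity of the solution map of a one-dimensional linear Volterra integral equation with respect to the free term and to a uniformly convergent kernel coefficient --- the scalar case being the analogue of (and simpler than) Lemma~\ref{lem-Gamma-cont}, together with a Gronwall estimate to absorb the perturbation $\|\bar I^n-\bar I\|_T\to0$ in the coefficient --- and conclude $\hat I^n\RA\hat I$ by the continuous mapping theorem, $\hat I$ being the unique solution of \eqref{SIS-Ihat}. The stated path regularity and the Gaussianity of $\hat I$ when $\hat I(0)$ is Gaussian then follow because $\hat I$ is a bounded linear image, through the resolvent of the Volterra operator, of the jointly Gaussian triple $(\hat I(0),\hat I_0,\hat I_1)$, while $\hat I_0$ (hence $\hat I$) is continuous precisely when $F_0$ is. I expect the only genuinely substantive point to be the control of the error incurred when the state-dependent thinning level $\lambda\bar S^n(s^-)\bar I^n(s^-)$ is replaced by its deterministic limit $\lambda\bar S(s)\bar I(s)$ inside the PRM integral: one must verify that the stochastic fluctuation of this error is $o_P(1)$ uniformly in $t\in[0,T]$ --- which holds because the affected $u$-region has width of order $\|\bar I^n-\bar I\|_T\to0$ --- while its compensator contributes exactly the drift term displayed above. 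This is handled by the same argument as the corresponding step in the proof of Theorem~\ref{thm-FCLT-SIR}, which is precisely why the SIS statement can be recorded without a separate proof.
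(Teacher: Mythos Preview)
Your proposal is correct and takes essentially the same approach as the paper. The paper explicitly states (in the introduction and Section~\ref{sec-SIS}) that the SIS FCLT follows directly from the SIR proof and gives no separate argument; your sketch spells out precisely this reduction---substituting $\bar S^n=1-\bar I^n$, $\hat S^n=-\hat I^n$ so that $\hat S^n\bar I^n+\bar S\,\hat I^n=(1-\bar I^n-\bar I)\hat I^n\to(1-2\bar I)\hat I$, collapsing the two-dimensional Volterra system of Lemma~\ref{lem-Gamma-cont} to a scalar equation, and reusing Lemmas~\ref{lem-hatInitials-conv-SIR}, \ref{lem-hatSn-t-2-bound}, \ref{lem-hat-tilde-diff-SIR} and \ref{lem-hatMARI1-conv} verbatim with $\bar S\bar I=(1-\bar I)\bar I$.
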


\section{Non-Markovian SEIR and SIRS Models} \label{sec-SEIR-SIRS} 

\subsection{ SEIR Model with general exposing and infectious periods} \label{sec-SEIR}

The SEIR model is described as follows. 
There are four groups in the population: Susceptible, Exposed, Infectious and Recovered (Immune). 
Susceptible individuals get infected through interactions with infectious ones. 
After getting infected, they become exposed and remain so during a latent period of time, and then transit to the infectious period. 
Afterwards, these individuals become  recovered and immune, and will not be susceptible or infected in the future. 

Let $n$ be the population size. 
Let $S^n(t)$, $E^n(t)$, $I^n(t)$ and $R^n(t)$ represent the susceptible, exposed, infectious and recovered individuals, respectively, at time $t$. Assume that $I^n(0)>0$, $E^n(0)>0$,  $R^n(0)=0$, and $S^n(0)=n - I^n(0) - E^n(0)$. 
An individual $i$ going through the S-E-I-R process  has the following time epochs: $\tau_i^n$, $\tau_i^n+\xi_i$, $\tau_i^n+\xi_i+\eta_i$, representing the times of becoming exposed, infectious and recovered (immune), respectively; namely, $\xi_i$ is the exposure period and $\eta_i$ is the infectious period. (It is reasonable to assume that $\xi_i$ and $\eta_i$ are independent of the population size $n$.) 
For the individuals $I^n(0)$ that are infectious at time $0$, let $\eta_j^0$ be the remaining infectious period. For the individuals $E^n(0)$ that are exposed at time $0$, let $\xi_j^0$ be the remaining exposure time.

Assume that $(\xi_i, \eta_i)$'s are i.i.d. bivariate random vectors with a joint distribution $H(du,dv)$, which has marginal c.d.f.'s $G$ and $F$ for $\xi_i$ and $\eta_i$, respectively, and a conditional c.d.f. of $\eta_i$, $F(\cdot |u)$ given that $\xi_i=u$. Assume that $(\xi^0_j, \eta_j)$'s are i.i.d. bivariate random vectors with a joint distribution $H_0(du,dv)$, which has marginal c.d.f.'s $G_0$ and $F$ for $\xi_j^0$ and $\eta_j$, respectively, and a conditional c.d.f. of $\eta_j$, $F_0(\cdot|u)$ given that  $\xi^0_j=u$.  (Note that the pair $(\xi^0_j, \eta_j)$ is the remaining exposing time and the subsequent infectious period for the $i^{\rm th}$ individual initially being exposed.)
In addition, we assume that $(\xi_i, \eta_i)$ and $(\xi^0_i, \eta_j)$ are independent for each $i$, and they are also independent of $\{\eta^0_j\}$ (that is, the remaining infectious times of the initially infected individuals are independent of all the other exposing and infectious times). 
We use the notation $G^c=1-G$, and similarly for $G_0^c$, $F^c$ and $F_0^c$. 
Define 
\begin{align}
 \Phi_0(t)&:= \int_0^t \int_0^{t-u} H_0(du, dv) =\int_0^t \int_0^{t-u} F_0(dv|u) d G_0(u),  \label{Phi-0-def}\\
  \Psi_0(t) &:= \int_0^t \int_{t-u}^\infty H_0(du, dv)  = \int_0^t \int_{t-u}^\infty F_0(dv| u) d G_0(u) = G_0(t)- \Phi_0(t), \label{Psi-0-def} 
\end{align}
and 
\begin{align}
 \Phi(t)&:= \int_0^t \int_0^{t-u} H(du, dv) =\int_0^t \int_0^{t-u} F(dv|u) d G(u),  \label{Phi-def}\\
  \Psi(t) &:= \int_0^t \int_{t-u}^\infty H(du, dv)  = \int_0^t \int_{t-u}^\infty F(dv| u) d G(u) = G(t)- \Phi(t). \label{Psi-def}
\end{align}
Note that in the case of independent $\xi_i$ and $\eta_i$, letting $F(dv) = F(dv|u)$, we have 
\begin{align} \label{Psi-def-ind}
\Phi(t) = \int_0^t F(t-u) d G(u), \quad \Psi(t) = \int_0^t F^c(t-u) d G(u) = G(t) - \Phi(t). 
\end{align} 
Similarly, with independent $\xi_j^0$ and $\eta_j$, letting  $F_0(dv) = F_0(dv|u)=F(dv)$, we have 
\begin{align}  \label{Psi-0-def-ind}
\Phi_0(t) = \int_0^t F(t-u) d G_0(u), \quad \Psi_0(t) = \int_0^t F^c(t-u) d G_0(u) = G_0(t) - \Phi_0(t). 
\end{align}

Let $A^n(t)$ be the cumulative process of individuals that become exposed between time 0 and time $t$. Let $\lambda$ be the rate of susceptible patients that become exposed. 
Then we can express it as 
\begin{equation} \label{eqn-An}
A^n(t) = A_*\left( \lambda n \int_0^t  \frac{S^n(s)}{n} \frac{I^n(s)}{n} ds \right)
\end{equation}
where $A_*$ is a unit rate Poisson process. 
(This has the same expression as the cumulative process $A^n$ in \eqref{eqn-An-SIR} of individuals becoming infectious in the SIR model.) 
The process $A^n(t)$ has event times $\tau_i^n$, $i\in\NN$.   
Assume that the quantities $A_*$,  $\{(\xi_j^0,\eta_j^0)\}$, $\{(\xi_i, \eta_i)\}$, and 
the initial quantities $(E^n(0), I^n(0))$ are mutually independent. 

We represent the dynamics of $(S^n, E^n, I^n, R^n)$ as follows: for $t\ge 0$, 
\begin{align}
S^n(t) &= S^n(0) - A^n(t) = n- I^n(0) - E^n(0)  - A^n(t), \label{SEIR-Sn}\\
E^n(t) &= \sum_{j=1}^{E^n(0)} \bone (\xi_j^0 >t) +  \sum_{i=1}^{A^n(t)} \bone(\tau_i^n + \xi_i >t), \label{SEIR-En}\\
I^n(t) &= \sum_{j=1}^{I^n(0)} \bone(\eta_j^0 > t) + \sum_{j=1}^{E^n(0)} \bone (\xi_j^0 \le t) \bone(\xi_j^0+ \eta_j >t ) \non \\
& \qquad + \sum_{i=1}^{A^n(t)} \bone(\tau_i^n+ \xi_i \le t) \bone(\tau_i^n+ \xi_i + \eta_i >t), \label{SEIR-In}\\
R^n(t) & = \sum_{j=1}^{I^n(0)} \bone(\eta_j^0 \le t) +  \sum_{j=1}^{E^n(0)} \bone(\xi_j^0+ \eta_j \le t ) + \sum_{i=1}^{A^n(t)} \bone(\tau_i^n+ \xi_i + \eta_i \le t). 
\label{SEIR-Rn}
\end{align}
Note that we are abusing notation of $\eta_j$ and $\eta_i$ in the second and third terms of $I^n(t)$ and $R^n(t)$. The variables $\eta_j$ (more precisely, $\eta_j^E$) in the second term of $I^n(t)$ correspond to the infectious periods of initially exposed individuals that have become infectious by time $t$, while the variables $\eta_i$
(more precisely, $\eta_i^A$) in the third term correspond to the infectious periods of individuals that has become exposed and infectious after time 0 and before time $t$. We drop the superscripts $E$ and $A$, since it should not cause any confusion.
 
We also let $L^n$ be the cumulative process that counts individuals that have 
 become infectious by time $t$. Then its dynamics can be represented by
$$
L^n(t) = \sum_{j=1}^{E^n(0)} \bone (\xi_j^0 \le t) +  \sum_{i=1}^{A^n(t)} \bone(\tau_i^n + \xi_i \le t), \quad t\ge 0. 
$$
We have the following balance equations: for each $t\ge 0$, 
\begin{align}
n&=S^n(t) + E^n(t) +  I^n(t) + R^n(t) , \non\\
E^n(t) &=  E^n(0) + A^n(t) - L^n(t), \non\\
 I^n(t) &= I^n(0) + L^n(t) - R^n(t). \non
\end{align}

Observe that the dynamics of the exposure process $E^n(t)$ is similar to the infectious process $I^n(t)$ in \eqref{eqn-SIR-In} in the SIR model. 
The dynamics of the infectious process $I^n(t)$ resembles the dynamics of the second service station of a tandem infinite-server queue $G/GI/\infty-GI/\infty$, where the arrival process is $A^n$, and the first station has initial customers $E^n(0)$ with remaining service times $\{\xi^0_j\}$ and the second station has the initial customers $I^n(0)$ with remaining service times $\{\eta^0_j\}$. 
The processes $L^n$ and $R^n$ correspond to the departure processes from the first and second stations (service completions), respectively. 
Similar to the SIR model, the arrival process is Poisson with a ``state-dependent" arrival rate $\lambda n \frac{S^n(s)}{n} \frac{I^n(s)}{n}$, which depends not only on the state of $I^n(s)$ (state of the second ``station" in the tandem queueing model), but also on the state of susceptible individuals, $S^n(s)= n- I^n(0) - E^n(0)  - A^n(t)$. However it is independent of the state of the exposure individuals $E^n(t)$.

\begin{assumption} \label{AS-SEIR-1}
There exist deterministic constants $\bar{I}(0)\in (0,1)$ and $\bar{E}(0) \in (0,1)$ such that  $\bar{I}(0) + \bar{E}(0) <1$ and
 $(\bar{I}^n(0),\bar{E}^n(0))  \to ( \bar{I}(0), \bar{E}(0)) \in \RR^2 $ in probability  as $n\to\infty$. 
\end{assumption}

Define the fluid-scaled processes as in the SIR model. 
We have the following FLLN for the fluid-scaled processes $(\bar{S}^n, \bar{E}^n,  \bar{I}^n, \bar{R}^n)$. 

\begin{theorem}\label{thm-FLLN-SEIR}
Under Assumption \ref{AS-SEIR-1},  
\begin{equation} \label{SIR-FLLN-conv}
\left(\bar{S}^n, \bar{E}^n,  \bar{I}^n, \bar{R}^n \right) \to \left(\bar{S}, \bar{E},  \bar{I}, \bar{R} \right) \qinq D^4
\end{equation}
in probability as $n\to\infty$, where the limit process $(\bar{S}, \bar{E}, \bar{I}, \bar{R}) $ is the unique solution to the system of deterministic equations: for each $t\ge 0$, 
\begin{align}
\bar{S}(t) &= 1- \bar{I}(0) - \bar{E}(0) - \bar{A}(t)  = 1- \bar{I}(0) - \bar{E}(0) -  \lambda \int_0^t \bar{S}(s) \bar{I}(s) ds,   \label{SEIR-barS} \\
\bar{E}(t) &= \bar{E}(0) G_0^c(t) +  \lambda \int_0^t G^c(t-s) \bar{S}(s) \bar{I}(s) ds,   \label{SEIR-barE}\\
\bar{I}(t) 
& =  \bar{I}(0) F^c_0(t) +\bar{E}(0) \Psi_0(t)  + \lambda   \int_0^t \Psi(t-s) \bar{S}(s) \bar{I}(s) ds,   \label{SEIR-barI} \\
\bar{R}(t) 
& =  \bar{I}(0) F_0(t) +\bar{E}(0) \Phi_0(t)  + \lambda   \int_0^t \Phi(t-s)  \bar{S}(s) \bar{I}(s) ds .  \label{SEIR-barR} 
\end{align} 
The limit $\bar{S}$ is in $C$ and $\bar{E}$, $\bar{I}$ and $\bar{R}$ are in $D$.  If $G_0$ and $F_0$ are continuous, then they are in $C$. 
\end{theorem}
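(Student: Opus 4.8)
The plan follows the same pattern as the proof of Theorem~\ref{thm-FLLN-SIR}, with the extra kernels $\Psi,\Phi$ of \eqref{Phi-def}, \eqref{Psi-def} bookkeeping the exposed compartment. In outline: (a) record a priori bounds and a Poisson random measure (PRM) representation of the mark-weighted sums in \eqref{SEIR-En}--\eqref{SEIR-Rn}; (b) prove relative compactness of $(\bar S^n,\bar E^n,\bar I^n,\bar R^n)$ in $D^4$; (c) identify every subsequential limit as a solution of \eqref{SEIR-barS}--\eqref{SEIR-barR} and prove uniqueness for that Volterra system, from which convergence in probability and the stated sample-path regularity follow. For step (a): the four fluid-scaled processes are nonnegative and sum to $1$, hence all lie in $[0,1]$, and $\bar S^n$ is non-increasing; writing $\bar\Upsilon^n(t):=\lambda\int_0^t\bar S^n(s)\bar I^n(s)\,ds$ we have $A^n(t)=A_*(n\bar\Upsilon^n(t))$, $\bar A^n:=\bar S^n(0)-\bar S^n$ is non-decreasing, and $\bar\Upsilon^n$ is Lipschitz with constant $\le\lambda$. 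By construction the marks $(\xi_i,\eta_i)$ carried by the successive jumps of $A^n$ form an i.i.d.\ sequence with law $H$ independent of the driving Poisson process $A_*$; writing $\bar Q^n:=n^{-1}\sum_i\delta_{(\tau_i^n,\xi_i,\eta_i)}$, whose marginal in the first coordinate is $\bar A^n$, each sum over $i\le A^n(t)$ in \eqref{SEIR-En}--\eqref{SEIR-Rn} is the integral of a fixed bounded kernel against $\bar Q^n$ --- for instance $n^{-1}\sum_{i\le A^n(t)}\bone(\tau_i^n+\xi_i>t)=\int\bone(s+u>t)\,\bar Q^n(ds,du,dv)$, with $\bone(s+u\le t<s+u+v)$ for the $I^n$-sum and $\bone(s+u+v\le t)$ for the $R^n$-sum --- while the initial-condition sums are functionals of the empirical measures of the i.i.d.\ families $\{(\xi_j^0,\eta_j^0)\}$ and $\{\eta_j^0\}$.

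For step (b): $\{\bar A^n\}$ is $C$-tight, being non-decreasing, bounded, and asymptotically Lipschitz (by the functional LLN for the Poisson process, $\sup_{0\le r\le C}|n^{-1}A_*(nr)-r|\to0$ a.s.\ for each $C$), hence $\{\bar S^n\}=\{\bar S^n(0)-\bar A^n\}$ is $C$-tight by Assumption~\ref{AS-SEIR-1}. For $\bar E^n,\bar I^n,\bar R^n$ the initial terms converge uniformly in $t$ a.s.\ by Glivenko--Cantelli (to $\bar E(0)G_0^c$, to $\bar I(0)F_0^c+\bar E(0)\Psi_0$, and to $\bar I(0)F_0+\bar E(0)\Phi_0$), so the work is in the PRM terms. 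For these one shows, via a two-parameter empirical-process estimate --- Glivenko--Cantelli for $(\xi_i,\eta_i)$ applied along the first $\sim n\bar A^n(s)$ jumps, uniformly in $s$, together with the tightness of $\bar A^n$ --- that the $\bar I^n$ PRM term is within $o(1)$, uniformly in $t$ and in probability, of the convolution $\int_0^t\Psi(t-s)\,d\bar A^n(s)$, and likewise for $\bar E^n$ (kernel $G^c$) and $\bar R^n$ (kernel $\Phi$); since $\{\bar A^n\}$ is tight and $G^c,\Psi,\Phi$ are bounded and of bounded variation, these convolution functionals are tight in $D$. This yields relative compactness of the quadruple in $D^4$, the limits inheriting discontinuities only from the deterministic initial-term functions.

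For step (c): pass to a convergent subsequence and, by the Skorokhod representation theorem, assume the convergence of the quadruple and all the Glivenko--Cantelli convergences above hold almost surely. Since $\bar S^n,\bar I^n$ are uniformly bounded, converge in $D$, and the limit $\bar S$ is continuous, $\bar S^n\bar I^n\to\bar S\bar I$ in $L^1([0,T])$; hence $\bar\Upsilon^n(t)\to\lambda\int_0^t\bar S(s)\bar I(s)\,ds=:\bar A(t)$ uniformly, so $\bar A^n\to\bar A$ uniformly and \eqref{SEIR-barS} holds. Passing to the limit in the PRM terms --- using that the limit measure $d\bar A(s)\,H(du,dv)$ charges no boundary $\{s+u=t\}$ or $\{s+u+v=t\}$, because $d\bar A\ll ds$ while $G,F$ (and the diagonals of $H$) have at most countably many atoms --- gives $n^{-1}\sum_{i\le A^n(t)}\bone(\tau_i^n+\xi_i>t)\to\lambda\int_0^t G^c(t-s)\bar S(s)\bar I(s)\,ds$, and similarly the $\bar I^n$ PRM term $\to\lambda\int_0^t\Psi(t-s)\bar S(s)\bar I(s)\,ds$ and the $\bar R^n$ term $\to\lambda\int_0^t\Phi(t-s)\bar S(s)\bar I(s)\,ds$; combined with the initial-term limits this is \eqref{SEIR-barE}--\eqref{SEIR-barR}. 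The pair $(\bar S,\bar I)$ then solves the closed subsystem \eqref{SEIR-barS}, \eqref{SEIR-barI}; since any solution stays in $[0,1]^2$ and $(x,y)\mapsto xy$ is Lipschitz there, a Gronwall estimate gives uniqueness on every $[0,T]$, after which \eqref{SEIR-barE}, \eqref{SEIR-barR} determine $\bar E,\bar R$ uniquely. Hence every subsequential limit equals the same deterministic $(\bar S,\bar E,\bar I,\bar R)$, so $(\bar S^n,\bar E^n,\bar I^n,\bar R^n)\RA(\bar S,\bar E,\bar I,\bar R)$ in $D^4$ and, the limit being deterministic, the convergence is in probability. Regularity: $\bar S$ is Lipschitz, hence in $C$; the convolution terms $\int_0^t G^c(t-s)\bar S(s)\bar I(s)\,ds$, $\int_0^t\Psi(t-s)\bar S(s)\bar I(s)\,ds$, $\int_0^t\Phi(t-s)\bar S(s)\bar I(s)\,ds$ are continuous in $t$ for bounded integrands, so the discontinuities of $\bar E,\bar I,\bar R$ are exactly those of $G_0^c$, of $F_0^c$ and $\Psi_0$, and of $F_0$ and $\Phi_0$ respectively --- all of which vanish when $G_0$ and $F_0$ are continuous.

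The one genuinely delicate point is the limit passage in the PRM-weighted sums under the self-exciting feedback: $A^n$, hence the jump locations $\tau_i^n$, is a functional of the very marks $(\xi_i,\eta_i)$ it carries, so the usual mapping computation for a marked Poisson point process does not apply directly. The resolution is to exploit that the \emph{sequence} $\{(\xi_i,\eta_i)\}$ is i.i.d.\ and independent of $A_*$, combining the uniform (Glivenko--Cantelli) law of large numbers for the marks with the uniform convergence of $\bar A^n$, while being careful that $t$ enters both as the summation bound $A^n(t)$ and inside the indicators --- so a uniform-in-$t$ estimate is needed --- and that atoms of $G,F,G_0,F_0$ must be tracked to obtain the c\`adl\`ag (and, under the stated hypotheses, continuous) regularity of the limit.
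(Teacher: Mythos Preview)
Your outline matches the paper's proof closely: both establish tightness of $\bar A^n$ (hence of $\bar S^n$) via the Poisson martingale decomposition, split each of $\bar E^n,\bar I^n,\bar R^n$ into initial-condition terms (handled by the empirical-process LLN) plus a ``PRM term'', show the PRM term is uniformly close to the convolution $\int_0^t K(t-s)\,d\bar A^n(s)$ for the appropriate kernel $K\in\{G^c,\Psi,\Phi\}$, pass to the limit in the convolutions by continuous mapping, and close with Gronwall uniqueness for the $(\bar S,\bar I)$ subsystem.

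The one substantive difference is in how you execute the key ``centering'' step, i.e., showing $V^n(t):=\bar I^n_1(t)-\int_0^t\Psi(t-s)\,d\bar A^n(s)\to0$ uniformly in $t$ (and likewise for the $G^c$ and $\Phi$ kernels). You propose a Glivenko--Cantelli argument for the mark sequence $\{(\xi_i,\eta_i)\}$ combined with uniform convergence of $\bar A^n$; the paper instead exploits the conditional structure directly: given $\sF^n_t$, each summand $\kappa^n_i(t)=\bone(\tau^n_i+\xi_i\le t<\tau^n_i+\xi_i+\eta_i)-\Psi(t-\tau^n_i)$ is centered and they are uncorrelated, giving the pointwise bound $\E[V^n(t)^2]\le\lambda t/n$; the uniform-in-$t$ control is then obtained by a $\delta$-partition of $[0,T]$ together with explicit second-moment bounds on the increments $\sup_{u\le\delta}|V^n(t+u)-V^n(t)|$ computed from the PRM representation. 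This conditional-variance route sidesteps the feedback issue you flag (that $\tau^n_i$ depends on earlier marks) without invoking a uniform-in-both-index-and-argument empirical-process estimate, which your sketch leaves implicit. Your approach is workable, but filling in the ``uniform-in-$t$ estimate'' you mention would in practice lead you to essentially the same $\delta$-partition plus moment bound that the paper carries out.
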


We remark that given the input data $\bar{I}(0)$ and $\bar{E}(0)$ and the distribution functions, the solution to the set of equations above can be determined by the two equations \eqref{SEIR-barS} and \eqref{SEIR-barI} for $\bar{S}$ and $\bar{I}$, which is a $2$--dimensional system of linear Volterra integral equations. 
It is easy to check that we have the balance equation for the FLLN limits: 
\begin{align}
1=\bar{S}(t) + \bar{E}(t) +  \bar{I}(t) + \bar{R}(t), \non
\end{align}
As a consequence, we have the joint convergence with 
$(\bar{A}^n, \bar{L}^n) \to (\bar{A}, \bar{L})$ in $ D^2 $
in probability as $n\to \infty$, where
\begin{align*}
\bar{A}(t)  &=  \bar{E}(t) + \bar{L}(t)-  \bar{E}(0), \qquad 
  \bar{L}(t)  = \bar{I}(t)+ \bar{R}(t) - \bar{I}(0). 
\end{align*}
In particular, we have
\begin{align*}
\bar{A}(t) =  \lambda \int_0^t \bar{S}(s) \bar{I}(s) ds, 
\qquad
\bar{L}(t) = \bar{E}(0) G_0(t) +  \lambda \int_0^t G(t-s) \bar{S}(s) \bar{I}(s) ds. 
\end{align*}

Define the diffusion-scaled processes:
\begin{align}\label{SEIR-diff-def}
\hat{S}^n(t) & := \sqrt{n} \left( \bar{S}^n(t) - \bar{S}(t) \right) = \sqrt{n} \left( \bar{S}^n(t) -  1 + \bar{I}(0) +  \lambda \int_0^t \bar{S}(s) \bar{I}(s) ds  \right), \\
\hat{E}^n(t) &:=\sqrt{n} \left( \bar{E}^n(t) -\bar{E}(t) \right)  = \sqrt{n} \left( \bar{E}^n(t)  -  \bar{E}(0) G_0^c(t) -  \lambda \int_0^t G^c(t-s) \bar{S}(s) \bar{I}(s) ds \right), \non\\
\hat{I}^n(t) &:= \sqrt{n} \left( \bar{I}^n(t) - \bar{I}(t) \right) =  \sqrt{n} \Bigg( \bar{I}^n(t)  - \bar{I}(0) F^c_0(t) -   \bar{E}(0) \Psi_0(t)
- \lambda \int_0^t  \Psi(t-s)
\bar{S}(s) \bar{I}(s) ds \Bigg) , \non \\
\hat{R}^n(t)  &:=  \sqrt{n} \left( \bar{R}^n(t)  - \bar{R}(t) \right)  = \sqrt{n} \Bigg( \bar{R}^n(t) - \bar{I}(0) F_0(t) 
-   \bar{E}(0) \Phi_0(t)
- \lambda \int_0^t   \Phi(t-s)
\bar{S}(s) \bar{I}(s) ds \Bigg).  \non
\end{align}
It is clear that 
\begin{equation}
\hat{S}^n(t) + \hat{E}^n(t) + \hat{I}^n(t) + \hat{R}^n(t) =0, \quad t\ge 0.  \non
\end{equation}

We will establish a FCLT for the diffusion-scaled processes 
$(\hat{A}^n, \hat{S}^n, \hat{E}^n,\hat{L}^n, \hat{I}^n, \hat{R}^n)$. 
 For that purpose, we make the following assumption on the initial condition and on the law of the exposure / infectious periods. 

 \begin{assumption} \label{AS-SEIR-2}
 There exist deterministic constants $\bar{I}(0)\in (0,1)$ and $\bar{E}(0) \in (0,1)$  and random variables $\hat{I}(0)$ and $\hat{E}(0)$ such that  $\bar{I}(0) + \bar{E}(0) <1$ and
 $$
 \left(\sqrt{n}(\bar{I}^n(0) - \bar{I}(0)), \sqrt{n}(\bar{E}^n(0) - \bar{E}(0)) \right) \RA (\hat{I}(0), \hat{E}(0)) \qinq \RR^2 \qasq n\to\infty.
 $$
 In addition, $\sup_n \E\big[\hat{E}^n(0)^2 \big]<\infty$ and $\sup_n \E\big[\hat{I}^n(0)^2 \big]<\infty$,
  and thus by Fatou's lemma, $\E\big[\hat{E}(0)^2\big]<\infty$ and $\sup_n \E\big[\hat{I}^n(0)^2 \big]<\infty$. 
 \end{assumption}
 
In the next statement, $(\hat{S},\hat{I})$ is the unique solution of the system of linear integral equations \eqref{SEIR-Shat}, \eqref{SEIR-Ihat}, whose existence and uniqueness follows from an obvious extension of the first part of Lemma \ref{lem-Gamma-cont} below.  Once $(\hat{S},\hat{I})$ is specified, 
$\hat{E}$ and  $\hat{R}$ are given by the formulas \eqref{SEIR-Ehat}  and \eqref{SEIR-Rhat}.

 \begin{theorem} \label{thm-FCLT-SEIR}
Under Assumption~\ref{AS-SEIR-2}, 
 \begin{equation} \label{eqn-FCLT-conv-SEIR}
(\hat{S}^n, \hat{E}^n,\hat{I}^n, \hat{R}^n) \RA (\hat{S}, \hat{E}, \hat{I}, \hat{R}) \qinq D^4 \qasq n \to\infty,
\end{equation}
where the limit processes $(\hat{S}, \hat{E}, \hat{I}, \hat{R}) $ are the unique solution to the following set of stochastic Volterra integral equations driven by Gaussian processes: 
\begin{align}\label{SEIR-Shat}
\hat{S}(t) 
&= -\hat{I}(0)  -   \lambda \int_0^t \left( \hat{S}(s) \bar{I}(s)+ \bar{S}(s)\hat{I}(s) \right) ds - \hat{M}_A(t), 
\end{align}
\begin{equation} \label{SEIR-Ehat}
\hat{E}(t) =  \hat{E}(0) G^c_0(t)  + \lambda \int_0^t G^c(t-s) \left( \hat{S}(s) \bar{I}(s) +\bar{S}(s)\hat{I}(s) \right) ds  + \hat{E}_0(t) + \hat{E}_1(t),
\end{equation}
\begin{align} \label{SEIR-Ihat}
\hat{I}(t) & =\hat{I}(0) F^c_0(t) +  \hat{E}(0) \Psi_0(t) +   \hat{I}_{0,1}(t) + \hat{I}_{0,2}(t) +  \hat{I}_{1}(t) \non\\
& \qquad +  \lambda \int_0^t  \Psi(t-s)   \left( \hat{S}(s) \bar{I}(s) + \bar{S}(s) \hat{I}(s) \right) ds , 
\end{align}
\begin{align} \label{SEIR-Rhat}
\hat{R}(t) & =\hat{I}(0) F_0(t) +  \hat{E}(0) \Phi(t)
 +  \hat{R}_{0,1}(t) + \hat{R}_{0,2}(t) +  \hat{R}_{1}(t)  \non\\
& \qquad +  \lambda \int_0^t \Phi(t-s) \left( \hat{S}(s) \bar{I}(s) + \bar{S}(s) \hat{I}(s) \right) ds, 
\end{align}
with  $\bar{S}(t)$ and $\bar{I}(t)$ given in Theorem \ref{thm-FLLN-SEIR}.
Here $( \hat{E}_0,  \hat{I}_{0,1}, \hat{I}_{0,2}, \hat{R}_{0,1}, \hat{R}_{0,2})$, independent of $\hat{E}(0)$ and $\hat{I}(0)$,  is a mean-zero Gaussian process with covariance functions: for $t, t'\ge 0$, 
\begin{align*}
\Cov(\hat{E}_0(t), \hat{E}_0(t')) &= \bar{E}(0) (G_0^c(t\vee t') - G^c_0(t) G^c_0(t')), \\
\Cov(\hat{I}_{0,1}(t), \hat{I}_{0,1}(t')) &= \bar{I}(0) (F_0^c(t\vee t') - F_0^c(t) F_0^c(t')), \\
\Cov(\hat{I}_{0,2}(t), \hat{I}_{0,2}(t')) & = \bar{E}(0) \left(\Psi_0(t\wedge t') - \Psi_0(t) \Psi_0(t')\right),\\
\Cov(\hat{R}_{0,1}(t), \hat{R}_{0,1}(t')) & = \bar{I}(0) (F_0(t\wedge t') - F_0(t) F_0(t')), \\
\Cov(\hat{R}_{0,2}(t), \hat{R}_{0,2}(t')) & = \bar{E}(0) \left( \Phi_0(t\wedge t') -   \Phi_0(t) \Phi_0(t')  \right), \\
\Cov( \hat{E}_{0}(t),  \hat{I}_{0,2}(t')) &= \bar{E}(0)\bone(t'\ge t) \bigg( \int_t^{t'}  F^c_0(t'-s|s)dG_0(s) - G_0^c(t) \Psi_0(t') \bigg),  \non\\
\Cov( \hat{E}_{0}(t),  \hat{R}_{0,2}(t')) &= \bar{E}(0) \bone(t'\ge t) \bigg(\int_t^{t'}F_{0}(t'-s|s) d G_0(s) - G_0^c(t)  \Phi_0(t')\bigg),    \non\\
\Cov( \hat{I}_{0,2}(t),  \hat{R}_{0,2}(t')) &= \bar{E}(0) \bone(t'\ge t) \bigg( \int_0^{t} (F_0(t'-s|s) - F_0(t-s|s))d G_0(s) - \Psi_0(t) \Phi_0(t') \bigg).
\end{align*} 
$ \hat{I}_{0,1}(t)$ and $\hat{I}_{0,2}(t)$ are independent, so are the pairs $ \hat{R}_{0,1}(t)$ and $\hat{R}_{0,2}(t)$, 
$\hat{E}_{0}(t)$ and $ \hat{I}_{0,1}(t)$, $\hat{E}_{0}(t)$ and $ \hat{R}_{0,1}(t)$,  $\hat{I}_{0,1}(t)$ and $ \hat{R}_{0,j}(t)$ for $j=1,2$. 
 
The limit 
$ (\hat{M}_A, \hat{E}_1, \hat{I}_1,\hat{R}_1)$ is a four-dimensional continuous Gaussian process, independent of $\hat{E}_0$, $\hat{I}_{0,1}$, $\hat{I}_{0,2}$, $\hat{R}_{0,1}$, $\hat{R}_{0,2}$ and $\hat{I}(0)$,   and can be written as
\begin{align}
&\hat{M}_A(t) = W_H([0,t]\times[0,\infty)\times [0,\infty)), \quad
\hat{E}_1(t) = W_H([0,t]\times[t,\infty)\times [0,\infty)), \non\\
& \hat{I}_1(t) = W_H([0,t]\times[0,t)\times[t,\infty)), \quad
\hat{R}_1(t) = W_H([0,t]\times[0,t)\times [0,t)),  \non
\end{align}
where  $W_H$ is
a continuous Gaussian white noise process on $\RR_+^3$ with mean zero and 
\begin{align}
& \E\left[ W_H([s,t)\times [a,b)\times [c,d))^2\right] \non\\
& =  \lambda \int_s^t \left( \int_{a-s}^{b-s} (F(d-y-s|y) - F(c-y-s|y)) G(dy) \right)  \bar{S}(s) \bar{I}(s) ds,
\end{align}
for $0 \le s \le t$,  $0 \le a \le b$ and $0 \le c \le d$.

The limit process $\hat{S}$ has continuous sample paths and $\hat{E}_1$, $\hat{I}_1$ and $\hat{R}_1$ have   c{\`a}dl{\`a}g sample paths.
If the c.d.f.'s $G_0$ and $F_0$ are continuous, then $\hat{E}_0$, $\hat{I}_{0,1}$, $\hat{I}_{0,2}$, $\hat{R}_{0,1}$ and $\hat{R}_{0,2}$ are continuous, and thus, $\hat{E}_1$, $\hat{I}_1$ and $\hat{R}_1$ have continuous sample paths. 
  If $(\hat{I}(0),\hat{E}(0))$ is a Gaussian random vector, then  $(\hat{S}, \hat{E}, \hat{I},\hat{R})$ is a Gaussian process. 
 \end{theorem}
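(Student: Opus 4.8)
The plan is to adapt the argument used for Theorem~\ref{thm-FCLT-SIR} (carried out in Section~\ref{sec-SIR-FCLT-proof}) to the three-stage structure of the SEIR model. Start from the exact identities \eqref{SEIR-Sn}--\eqref{SEIR-Rn}, subtract the fluid centering terms of Theorem~\ref{thm-FLLN-SEIR}, and decompose each diffusion-scaled process into three groups of contributions. The fluctuation $\hat A^n:=\sqrt n(\bar A^n-\bar A)$ of the cumulative exposure process splits into a compensated-Poisson martingale term, which converges to $\hat M_A$, plus $\lambda\int_0^\cdot \sqrt n(\bar S^n\bar I^n-\bar S\,\bar I)\,ds$; expanding the latter as $\lambda\int_0^\cdot(\hat S^n\bar I^n+\bar S^n\hat I^n)\,ds$ and using the FLLN produces the drift integrals $\lambda\int_0^t(\hat S(s)\bar I(s)+\bar S(s)\hat I(s))\,ds$ of \eqref{SEIR-Shat}--\eqref{SEIR-Rhat}. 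The second group consists of the empirical-process fluctuations of the i.i.d.\ initial marks $\{\eta^0_j\}_{j\le I^n(0)}$ and $\{(\xi^0_j,\eta_j)\}_{j\le E^n(0)}$; conditioning on $(I^n(0),E^n(0))$ and invoking the classical Donsker theorem together with Assumption~\ref{AS-SEIR-2}, these converge jointly to the Gaussian fields $\hat I_{0,1},\hat E_0,\hat I_{0,2},\hat R_{0,1},\hat R_{0,2}$ with the stated covariances. Here the two pieces $\hat I_{0,2}$ and $\hat R_{0,2}$ genuinely involve two parameters, since an initially exposed individual becomes infectious at time $\xi^0_j$ and recovers at $\xi^0_j+\eta_j$, so these are functionals of the joint law $H_0$, which explains mixed covariances such as $\Cov(\hat E_0(t),\hat I_{0,2}(t'))$. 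The third group is the marked sum over newly infected individuals, handled by the PRM representation.

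The key step is the construction, exactly as in the SIR case, of a Poisson random measure representation of $A^n$ together with its marks. Because $A^n$ in \eqref{eqn-An} is a Poisson process with intensity $\lambda n\bar S^n(s)\bar I^n(s)$ and the marks $(\xi_i,\eta_i)$ are i.i.d.\ with law $H$, the point process $\sum_i\delta_{(\tau^n_i,\xi_i,\eta_i)}$ can be realized as a random time-change of a Poisson random measure on $\RR_+\times\RR_+^2$ with mean measure $\lambda n\,ds\,H(du,dv)$, whose compensator given $(\bar S^n,\bar I^n)$ is $\lambda n\bar S^n(s)\bar I^n(s)\,ds\,H(du,dv)$. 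Rewriting the last sums in \eqref{SEIR-En}--\eqref{SEIR-Rn} as integrals against this measure, subtracting the compensator, and multiplying by $\sqrt n$, the resulting centered random measure converges---because $\bar S^n\bar I^n\to\bar S\,\bar I$ by the FLLN---to the Gaussian white noise $W_H$ on $\RR_+^3$ with the variance stated in the theorem. The driving terms $\hat M_A,\hat E_1,\hat I_1,\hat R_1$ are then precisely the ``rectangular'' linear functionals of $W_H$ written in the statement, and their independence of the initial-mark limits and of $\hat I(0)$ follows from the independence assumptions. This also exhibits the correspondence, mentioned in the Introduction, between the PRM-based diffusion-scaled processes and the white-noise functionals.

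With the joint convergence of all driving terms in hand, it remains to prove tightness of the diffusion-scaled vector and to identify the limit through the Volterra system. For tightness I would derive moment bounds for the stochastic integrals against the compensated PRM using Doob's inequality and the explicit predictable quadratic variation, combine them with Gronwall-type estimates coming from the linear Volterra structure of \eqref{SEIR-Shat}--\eqref{SEIR-Ihat}, and control the oscillations; when $G_0$ or $F_0$ has atoms one obtains tightness in $D$ rather than $C$, with fixed discontinuities only at the atoms. To identify the limit, note that $(\hat S^n,\hat I^n)$ solves a two-dimensional linear stochastic Volterra system whose inhomogeneous term converges by the above; the continuity of the solution map---the obvious two-dimensional extension of Lemma~\ref{lem-Gamma-cont}---gives $(\hat S^n,\hat I^n)\RA(\hat S,\hat I)$, and then $\hat E^n$ and $\hat R^n$ converge by applying the continuous-mapping theorem to \eqref{SEIR-Ehat} and \eqref{SEIR-Rhat}, establishing \eqref{eqn-FCLT-conv-SEIR}; joint convergence of all six processes $(\hat A^n,\hat S^n,\hat E^n,\hat L^n,\hat I^n,\hat R^n)$ follows along the way, and uniqueness of the limiting system is again Lemma~\ref{lem-Gamma-cont}. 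Finally, if $(\hat I(0),\hat E(0))$ is Gaussian then every driving term is Gaussian (white-noise integrals and Donsker limits are Gaussian, jointly with the independent initial vector), and the solution of a linear Volterra equation with Gaussian inputs is Gaussian, which gives the last assertion.

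The main obstacle is the bookkeeping for the three stages combined with establishing the joint weak convergence of the full vector of Gaussian driving components with exactly the covariance and independence structure stated---in particular, keeping the three independent sources (the initially infectious marks $\{\eta^0_j\}$, the initially exposed marks $\{(\xi^0_j,\eta_j)\}$, and the newly infected marks carried by the PRM) separated and verifying the cross-covariances. The two-parameter nature of the initially exposed contributions, where one individual feeds successively into $\hat E_0$, $\hat I_{0,2}$ and $\hat R_{0,2}$ through the shared transition time $\xi^0_j$, is the genuinely new feature relative to the SIR proof. A secondary technical point is obtaining tightness uniformly in $n$ despite the self-exciting, state-dependent arrival rate and the absence of any smoothness assumption on $F$, $G$, $F_0$, $G_0$.
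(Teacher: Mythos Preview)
Your overall architecture matches the paper's: the same three-way decomposition (initial empirical processes, PRM-driven fluctuations, Volterra drift), the same PRM representation converging to the white noise $W_H$, and the same closing step via the continuity of the linear Volterra solution map (Lemma~\ref{lem-Gamma-cont}). The treatment of the initial-mark contributions and the identification of the limit are essentially what the paper does in Lemmas~\ref{lem-hatInitials-conv-SEIR} and the ``Completing the proof'' paragraph.

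The genuine gap is in your tightness argument for the PRM terms. You write that you would ``derive moment bounds for the stochastic integrals against the compensated PRM using Doob's inequality and the explicit predictable quadratic variation,'' but neither $\hat I^n_1$ nor $\hat E^n_1$ is a martingale in $t$: the integration region $\{s\le t,\ s+\xi\le t,\ s+\xi+\eta>t\}$ varies with $t$ in a non-adapted way, and moreover the intensity $\lambda n\bar S^n\bar I^n$ is itself state-dependent and not adapted to the natural filtration generated by the marked arrivals up to the relevant epoch. The paper resolves this in two steps that are the real technical content of Section~\ref{sec-SEIR-FCLT-proof}. First, it replaces the random intensity by the deterministic fluid $\lambda n\bar S\,\bar I$ to define auxiliary processes $\widetilde M^n_A,\widetilde E^n_1,\widetilde L^n_1,\widetilde I^n_1,\widetilde R^n_1$, and proves (Lemma~\ref{lem-hat-tilde-diff-SEIR}, using the moment bounds of Lemma~\ref{lem-hatSn-t-2-bound-SEIR}) that the hat--tilde differences vanish. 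Second---and this is the step that is genuinely new relative to the SIR proof---it introduces the cumulative-infectious process $L^n$ precisely so that $\widetilde M^n_A$, $\widetilde L^n_1$ and $\widetilde R^n_1$ are each martingales, with respect to three \emph{different} filtrations $\sG^A_t,\sG^L_t,\sG^R_t$; tightness of the non-martingale pieces then comes for free from the identities $\widetilde E^n_1=\widetilde M^n_A-\widetilde L^n_1$ and $\widetilde I^n_1=\widetilde L^n_1-\widetilde R^n_1$. Because the three martingales live on different filtrations, there is no joint martingale FCLT available; the paper instead establishes finite-dimensional convergence directly by computing characteristic functions of PRM integrals over boxes (formula~\eqref{eqn-char-SEIR}), exploiting the independence of the PRM on disjoint sets. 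Your proposal mentions $\hat L^n$ only in passing and does not identify this role; without it, the tightness step as you describe it would not go through.
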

  
  \begin{remark}
 The  processes $(\hat{S}(t),\hat{E}(t), \hat{I}(t), \hat{R}(t))$ in \eqref{SEIR-Shat}, \eqref{SEIR-Ehat}, \eqref{SEIR-Ihat} and \eqref{SEIR-Rhat} can be regarded as the solution of a four-dimensional Gaussian-driven linear Volterra stochastic integral equation. The existence and uniqueness of solution can be easily verified.
 From the representations of 
the limit processes $ (\hat{M}_A,\hat{E}_1, \hat{I}_1,\hat{R}_1)$ 
using the white noise $W_H$, we easily obtain 
the covariance functions: for  $t, t'\ge 0$, 
$$
\Cov(\hat{M}_A(t), \hat{M}_A(t')) = \lambda \int_0^{t\wedge t'}  \bar{S}(s) \bar{I}(s) ds, \quad
\Cov(\hat{E}_1(t), \hat{E}_1(t'))  =  \lambda \int_0^{t\wedge t'} G^c(t\vee t'-s) \bar{S}(s) \bar{I}(s) ds, 
$$
$$
\Cov(\hat{I}_1(t), \hat{I}_1(t'))  =  \lambda   \int_0^{t\wedge t'}\int_0^{t\wedge t'-s} F^c(t\vee t'-s-u|u)dG(u)  \bar{S}(s) \bar{I}(s) ds, 
$$
$$
\Cov(\hat{R}_1(t), \hat{R}_1(t')) = \lambda \int_0^{t\wedge t'} \Phi(t\wedge t'-s) \bar{S}(s) \bar{I}(s) ds, 
$$
\begin{align}
\Cov(\hat{E}_1(t), \hat{I}_1(t')) & = \lambda \int_0^{t\wedge t'}  (G^c(t-s) - \Psi(t'-s))\bone(t'\ge t)  \bar{S}(s) \bar{I}(s) ds,\non\\
\Cov(\hat{E}_1(t), \hat{R}_1(t')) & = \lambda \int_0^{t\wedge t'}(G^c(t-s) - \Phi(t'-s))\bone(t'\ge t) \bar{S}(s) \bar{I}(s) ds,\non\\
\Cov(\hat{I}_1(t), \hat{R}_1(t')) & = \lambda \int_0^{t\wedge t'} \int_0^{t'-s}  (F(t'-s-y|y) - F(t-s-y|y)) \bone(t'\ge t) d G(y) \bar{S}(s) \bar{I}(s) ds, \non 
\end{align}
$$
\Cov(\hat{M}_A(t), \hat{E}_1(t'))  = \lambda \int_0^{t\wedge t'} G^c(t'-s) \bar{S}(s) \bar{I}(s) ds,\,\,
\Cov(\hat{M}_A(t), \hat{I}_1(t'))  = \lambda \int_0^{t\wedge t'} \Psi(t'-s) \bar{S}(s) \bar{I}(s) ds,
$$
$$
\Cov(\hat{M}_A(t), \hat{R}_1(t')) = \lambda \int_0^{t\wedge t'} \Phi(t'-s) \bar{S}(s) \bar{I}(s) ds.\non
$$
 \end{remark}

  \begin{remark}
  We remark that the exposing and infectious periods are allowed to be dependent, and the effect of such dependence is exhibited in the covariances of the functions of the limit processes $ (\hat{M}_A, \hat{E}_1, \hat{I}_1,\hat{R}_1)$ and in the drift of $\hat{I}$ and $\hat{R}$. Of course, the dependence also affects the deterministic equations for $(\bar{S}, \bar{I})$.  
  
It is also worth noting that the FLLN and FCLT limits for the SIR model can be derived from those for the SEIR model by setting $G=\delta_0$.  Similarly the limits for the SIS model can be also derived from those for the SIRS model, see the next subsection. 
  \end{remark}

\subsection{SIRS model with general infectious and immune periods} 
In the SIRS model, there are three groups in the population: Susceptible, Infectious, Recovered (Immune). Susceptible individuals get infected through interactions with infectious ones, and they become infectious immediately (no exposure period like in the SEIR model). The infectious individuals become recovered and immune, and after the immune periods, they become susceptible. 
This has a lot of resemblance with the SEIR model, where the exposure and infectious periods in the SEIR model correspond to the infectious and immune periods in the SIRS model, respectively. 
We let $S^n(t), I^n(t), R^n(t)$ represent the susceptible, infectious and immune individuals, respectively at each time $t$ in the SIRS model. Note that $I^n(t)$ (resp. $R^n(t)$) in the SIRS model corresponds to  $E^n(t)$ (resp. $I^n(t)$) in the SEIR model, and $S^n(t)$ in the SIRS model satisfies the balance equation: 
$$
n = S^n(t) + I^n(t) + R^n(t), \quad t\ge 0. 
$$
Since $S^n(t) = n - I^n(t) - R^n(t)$, it suffices to only study the dynamics of the two processes $(I^n, R^n)$. 
We use the variables $\xi_i, \eta_i$ represent the infectious and immune periods, respectively, in the SIRS model, and similarly for the initial quantities $\xi^0_j, \eta_j$. 
We also use the same distribution functions associated with these variables as in the SEIR model. We impose the same conditions in Assumptions \ref{AS-SEIR-1}--\ref{AS-SEIR-2}, where the quantities $E^n(0)$ and $I^n(0)$ are replaced by $I^n(0)$ and $R^n(0)$, respectively. To distinguish the differences, we refer to these as Assumptions \ref{AS-SEIR-1}'--\ref{AS-SEIR-2}'.

We first obtain the following FLLN for the fluid-scaled processes $(\bar{I}^n, \bar{R}^n)$. 
\begin{theorem}\label{thm-FLLN-SIRS}
Under Assumption \ref{AS-SEIR-1}',  $\left(\bar{I}^n,  \bar{R}^n \right) \to \left(\bar{I},  \bar{R} \right)$ in  $D^2$
in probability as $n\to\infty$, where the limits $(\bar{I}, \bar{R}) $ are the unique solution to the system of deterministic equations:  
\begin{align}
\bar{I}(t) &= \bar{I}(0) G_0^c(t) +  \lambda \int_0^t G^c(t-s) (1-\bar{I}(s) -\bar{R}(s))  \bar{I}(s)  ds,   \label{SIRS-barE}\\
\bar{R}(t) 
& =  \bar{R}(0) F^c_0(t) +\bar{I}(0) \Psi_0(t)  + \lambda   \int_0^t \Psi(t-s)  (1-\bar{I}(s) -\bar{R}(s)) \bar{I}(s) ds, \label{SIRS-barI} 
\end{align} 
for each $t\ge 0$.
  If $G_0$ and $F_0$ are continuous, then $\bar{I}$ and $\bar{R}$  are in $C$.
\end{theorem}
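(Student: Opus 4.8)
The plan is to run the argument used for the FLLN of the SEIR model (proof of Theorem~\ref{thm-FLLN-SEIR}), with only the bookkeeping changes dictated by the cyclic structure of the SIRS model: the two tracked processes are $(I^n,R^n)$, the susceptibles satisfy $S^n(t)=n-I^n(t)-R^n(t)$, the infection process $A^n$ of \eqref{eqn-An} has stochastic intensity $\lambda n\,\bar S^n(s)\bar I^n(s)$, and each infection event at $\tau_i^n$ carries a fresh pair $(\xi_i,\eta_i)\sim H$ (infectious then immune period), with no individual counted twice in the counting identities since a re-infection of individual $i$ can occur only after its previous cycle $\xi_i+\eta_i$ is complete. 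I would first record the Poisson random measure representation, exactly as for the SEIR model: let $N$ be a PRM on $\RR_+\times\RR_+\times\RR_+^2$ with mean measure $\D s\,\D u\,H(\D\xi,\D\eta)$, so that
\[
A^n(t)=\int_{[0,t]\times\RR_+\times\RR_+^2}\bone\!\big(u\le\lambda n\,\bar S^n(s^-)\bar I^n(s^-)\big)\,N(\D s,\D u,\D\xi,\D\eta),
\]
\[
I^n(t)=\sum_{j=1}^{I^n(0)}\bone(\xi_j^0>t)+\int_{[0,t]\times\RR_+\times\RR_+^2}\bone\!\big(u\le\lambda n\,\bar S^n(s^-)\bar I^n(s^-)\big)\bone(s+\xi>t)\,N(\D s,\D u,\D\xi,\D\eta),
\]
and $R^n(t)$ has the analogous form with indicator $\bone(s+\xi\le t,\,s+\xi+\eta>t)$ inside the integral and the two initial sums $\sum_{j=1}^{R^n(0)}\bone(\eta_j^0>t)$ and $\sum_{j=1}^{I^n(0)}\bone(\xi_j^0\le t,\,\xi_j^0+\eta_j>t)$. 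The elementary a priori bounds $\bar S^n,\bar I^n,\bar R^n\in[0,1]$, $\bar S^n+\bar I^n+\bar R^n\le1$, hence $\bar S^n\bar I^n\le1/4$ and $\bar A^n(t)\le n^{-1}A_*(\lambda n t/4)$, provide the compactness needed below.

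Dividing by $n$ and compensating the Poisson integrals yields
\[
\bar I^n(t)=\bar I^n(0)\,\widehat G_0^{\,n}(t)+\lambda\int_0^t G^c(t-s)\bar S^n(s)\bar I^n(s)\,\D s+\varepsilon_I^n(t),
\]
\[
\bar R^n(t)=\bar R^n(0)\,\widehat F_0^{\,n}(t)+\bar I^n(0)\,\widehat\Psi_0^{\,n}(t)+\lambda\int_0^t\Psi(t-s)\bar S^n(s)\bar I^n(s)\,\D s+\varepsilon_R^n(t),
\]
where $\widehat G_0^{\,n},\widehat F_0^{\,n},\widehat\Psi_0^{\,n}$ are the empirical counterparts of $G_0^c,F_0^c,\Psi_0$ built from the initial periods and $\varepsilon_I^n,\varepsilon_R^n$ collect the centered PRM integrals. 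Since $\bar I(0),\bar R(0)>0$, the numbers of initially infectious and initially immune individuals tend to infinity, so by Glivenko--Cantelli and Assumption~\ref{AS-SEIR-1}' the empirical terms converge uniformly in probability to $\bar I(0)G_0^c$, $\bar R(0)F_0^c$ and $\bar I(0)\Psi_0$ respectively. For the errors, the $L^2$-isometry for integrals against $N$ minus its mean measure together with $\bar S^n\bar I^n\le1/4$ gives $\E[\varepsilon_\bullet^n(t)^2]=O(1/n)$ for each fixed $t$; uniformity on $[0,T]$ is obtained, as in the SEIR FLLN proof, by a modulus-of-continuity estimate for the two-parameter fields obtained by replacing the running time $t$ inside the indicators with a free parameter $t'$, decomposing the $t'$-increments into monotone pieces such as $\bone(t_1<s+\xi\le t_2)$, bounding their second moments via the mean measure, and finally restricting to $t'=t$. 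Hence $\|\varepsilon_I^n\|_T\vee\|\varepsilon_R^n\|_T\to0$ in probability.

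It remains to identify the limit and close the estimate. The system \eqref{SIRS-barE}--\eqref{SIRS-barI} with $\bar S=1-\bar I-\bar R$ is a two-dimensional nonlinear Volterra integral equation whose nonlinearity $(\bar I,\bar R)\mapsto(1-\bar I-\bar R)\bar I$ is Lipschitz on the bounded set where solutions live, so a standard Picard iteration gives existence and uniqueness of a $[0,1]$-valued solution on $[0,T]$; moreover $\bar I$ and $\bar R$ inherit the regularity of $G_0^c,F_0^c,\Psi_0$, hence are continuous when $G_0,F_0$ are, the convolution terms being always continuous. Subtracting the limit equations from the prelimit ones, bounding $\|\bar S^n-\bar S\|_t\le\|\bar I^n-\bar I\|_t+\|\bar R^n-\bar R\|_t$ and using that all factors are bounded by $1$, one arrives at
\[
\|\bar I^n-\bar I\|_t+\|\bar R^n-\bar R\|_t\le\rho^n+C\int_0^t\big(\|\bar I^n-\bar I\|_s+\|\bar R^n-\bar R\|_s\big)\,\D s,
\]
where $\rho^n\to0$ in probability (it collects the errors $\varepsilon_I^n,\varepsilon_R^n$ and the convergence of the empirical and initial-condition terms) and $C=C(\lambda,T)$; Grönwall's inequality then yields $\|\bar I^n-\bar I\|_T+\|\bar R^n-\bar R\|_T\to0$ in probability. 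Because the limits are deterministic, this uniform convergence implies $(\bar I^n,\bar R^n)\to(\bar I,\bar R)$ in $D^2$ in probability, which is convergence in $C^2$ when $G_0,F_0$ are continuous; the joint convergence of $(\bar S^n,\bar I^n,\bar R^n)$, and of $(\bar A^n,\bar L^n)$, follows from the balance relations.

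The step I expect to be the main obstacle is the uniform-in-$t$ control of the error terms $\varepsilon_I^n,\varepsilon_R^n$. Because the integrand $\bone(s+\xi\le t,\,s+\xi+\eta>t)$ depends on the running time $t$, these are not martingales in $t$, so one cannot apply Doob's inequality directly; instead one must pass through the two-time-parameter field, establish tightness via moment bounds on its increments in both parameters, and only then specialise to the diagonal. The remaining ingredients --- the a priori simplex bounds, Glivenko--Cantelli for the initial and empirical terms, and the Picard/Grönwall step --- are routine, the Grönwall constant staying finite precisely because all quantities are bounded on $[0,T]$.
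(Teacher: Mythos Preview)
Your proposal is correct and follows the route the paper intends: the paper gives no separate proof of Theorem~\ref{thm-FLLN-SIRS}, stating that the SIRS result follows from the SEIR analysis by the identification $I^n\leftrightarrow E^n$, $R^n\leftrightarrow I^n$, $S^n=n-I^n-R^n$; your adaptation of the SEIR FLLN proof is exactly this. The technical core --- the PRM representation, the Glivenko--Cantelli convergence of the initial terms, and the uniform-in-$t$ control of the centered PRM integrals via a $\delta$-partition/monotone-increment argument --- matches the paper's Lemmas in Sections~\ref{sec-SIR-FLLN-proof} and~\ref{sec-SEIR-FLLN-proof}.

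There is one organisational difference worth flagging. In its SIR and SEIR proofs the paper does \emph{not} subtract the limiting solution and apply Gr\"onwall to $\|\bar I^n-\bar I\|_t+\|\bar R^n-\bar R\|_t$ directly, as you do. Instead it (i) establishes tightness of $(\bar A^n,\bar S^n)$ and works along a convergent subsequence with limit $\bar A$, (ii) uses the continuous-mapping theorem on the integrated-by-parts form $\breve I^n_1(t)=\bar A^n(t)-\int_0^t\bar A^n(s)\,d\Psi(t-s)$ to identify the limit of the compensator, (iii) shows uniqueness of the limiting Volterra system via Gr\"onwall (applied to two putative limit solutions), and only then concludes convergence of the full sequence. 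Your direct Gr\"onwall-on-the-prelimit argument is a legitimate and somewhat more streamlined alternative; it works precisely because all kernels are bounded by $1$ and the nonlinearity is Lipschitz on the simplex, so the sup-norm estimate you wrote closes. Either route is fine here.
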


We define the diffusion-scaled processes $\hat{I}^n$ and $\hat{R}^n$ as in the SEIR model, but replacing $\bar{S}= 1-\bar{I} -\bar{R}$. 
We impose similar conditions on the initial quantities as in Assumption  \ref{AS-SEIR-2}, which refer to as Assumption~\ref{AS-SEIR-2}'.

 \begin{theorem} \label{thm-FCLT-SIRS}
Under Assumption~\ref{AS-SEIR-2}', 
 $(\hat{I}^n,\hat{R}^n) \RA (\hat{I}, \hat{R})$ in $D^2$ as $n \to\infty$,
where 
\begin{equation} \label{SIRS-Ehat}
\hat{I}(t) =  \hat{I}(0) G^c_0(t)  + \lambda \int_0^t G^c(t-s) \left( - \hat{I}(s) \bar{R}(s) + (1-\bar{I}(s) - 2 \bar{R}(s)) \hat{R}(s) \right) ds  + \hat{I}_0(t) + \hat{I}_1(t),
\end{equation}
\begin{align} \label{SIRS-Ihat}
\hat{R}(t) & =\hat{R}(0) F^c_0(t) +  \hat{I}(0) \Psi_0(t) +  \lambda \int_0^t  \Psi(t-s)    \left( - \hat{I}(s) \bar{R}(s) + (1-\bar{I}(s) - 2 \bar{R}(s)) \hat{R}(s) \right)  ds \non\\
& \qquad  \qquad  +   \hat{R}_{0,1}(t) + \hat{R}_{0,2}(t) +  \hat{R}_{1}(t),  
\end{align}
where  $ \hat{I}_0(t)$, $\hat{I}_1(t)$, $ \hat{R}_{0,1}(t)$ and $\hat{R}_{0,2}(t)$ are 
as given as $ \hat{E}_0(t)$, $\hat{E}_1(t)$, $ \hat{I}_{0,1}(t)$ and $\hat{I}_{0,2}(t)$, respectively,  in Theorem \ref{thm-FCLT-SEIR}. 
If the c.d.f.'s $G_0$ and $F_0$ are continuous, then  $ \hat{I}_0(t)$,  $ \hat{R}_{0,1}(t)$ and $\hat{R}_{0,2}(t)$ are continuous, and thus, the limit processes $\hat{I}_1$ and $\hat{R}_1$  have continuous sample paths. 
  If $(\hat{I}(0),\hat{R}(0))$ is a Gaussian random vector, then  $(\hat{I}, \hat{R})$ is a Gaussian process. 
   \end{theorem}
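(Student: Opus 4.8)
\medskip
\noindent\textbf{Proof proposal.}
The plan is to deduce Theorem~\ref{thm-FCLT-SIRS} from the argument already used for Theorem~\ref{thm-FCLT-SEIR}, after making precise the structural dictionary between the two models. Relabel the SIRS compartments so that the infectious class (sojourn time $\xi_i$, marginal law $G$) plays the role of $E^n$ in the SEIR model and the immune class (sojourn time $\eta_i$, marginal law $F$) plays the role of $I^n$; the SIRS balance $S^n(t)=n-I^n(t)-R^n(t)$ replaces the SEIR identity, and the ``susceptible again'' individuals require no separate bookkeeping. Under this dictionary the cumulative infection process $A^n$ has the Poissonian form \eqref{eqn-An}, and the occupancy representations of the SIRS processes $I^n$ and $R^n$ are exactly \eqref{SEIR-En} and \eqref{SEIR-In} with the SEIR initial counts $E^n(0),I^n(0)$ replaced by the SIRS initial counts $I^n(0),R^n(0)$; the one genuine difference is that here the Poisson infection rate is $\lambda n\tfrac{S^n(s)}{n}\tfrac{I^n(s)}{n}$ with $I^n$ the \emph{first} post-infection compartment rather than the second.

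First I would decompose $(\hat I^n,\hat R^n)$ as in the SEIR proof: each coordinate is a sum of a term produced by the random initial data $(I^n(0),R^n(0))$, a term produced by the fluctuations of $A^n$ about its fluid limit, and a ``sampling'' term coming from the i.i.d.\ vectors $\{(\xi_i,\eta_i)\}$. Following the SEIR construction I would introduce a Poisson random measure on $\RR_+\times\RR_+^2$ whose mean measure encodes the joint law $H(du,dv)$ of $(\xi,\eta)$ together with the fluid infection intensity $s\mapsto\lambda\bar S(s)\bar I(s)$, so that the centered occupancy sums over $i\le A^n(t)$ in $\hat I^n$ and $\hat R^n$ become stochastic integrals against the compensated measure, while the initial-condition sums over $j\le I^n(0)$ and $j\le R^n(0)$ are controlled by the usual empirical-process central limit theorem. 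This delivers joint convergence, with tightness, of the driving terms $(\hat M_A,\hat I_1,\hat R_1,\hat I_0,\hat R_{0,1},\hat R_{0,2})$ to the Gaussian limit in the statement; since these limits depend only on $G_0,F_0,G,F$ and on the fluid path $s\mapsto\lambda\bar S(s)\bar I(s)=\lambda(1-\bar I(s)-\bar R(s))\bar I(s)$, they are exactly the objects of Theorem~\ref{thm-FCLT-SEIR} read through the dictionary, which is why they can simply be quoted.

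Next I would close the system. Linearizing the infection rate $\lambda n\tfrac{S^n(s)}{n}\tfrac{I^n(s)}{n}$ about the fluid limit, using $\hat S^n=-\hat I^n-\hat R^n$ together with $\bar S=1-\bar I-\bar R$, produces the drift kernels in \eqref{SIRS-Ehat}--\eqref{SIRS-Ihat} and leads to a closed two-dimensional linear stochastic Volterra system
\[
(\hat I^n,\hat R^n)=\Lambda^n+\Gamma\bigl(\hat I^n,\hat R^n\bigr),
\]
where $\Gamma$ is a bounded deterministic linear Volterra operator built from the kernels $G^c,\Psi,\Phi$ and the bounded coefficients $\bar I,\bar R$, and $\Lambda^n\RA\Lambda$, the Gaussian forcing assembled from the limits of the previous paragraph and $(\hat I(0),\hat R(0))$. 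By an obvious two-dimensional extension of the first part of Lemma~\ref{lem-Gamma-cont}, the map $\Lambda\mapsto(\mathrm{Id}-\Gamma)^{-1}\Lambda$ is well defined and continuous on $D^2$, so the continuous mapping theorem gives $(\hat I^n,\hat R^n)\RA(\hat I,\hat R)$ with $(\hat I,\hat R)$ the unique solution of \eqref{SIRS-Ehat}--\eqref{SIRS-Ihat}; the path-regularity and Gaussianity assertions are then inherited from those of $\Lambda$, exactly as in the SEIR case.

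The main obstacle is bookkeeping rather than conceptual: because infection is now driven by the first post-infection compartment and because the SIRS balance $S^n=n-I^n-R^n$ replaces an absorbing recovered class, the linearized drift couples $\hat I$ and $\hat R$ differently than in the SEIR model, so the drift coefficients in \eqref{SIRS-Ehat}--\eqref{SIRS-Ihat} must be recomputed from scratch and checked against the fluid equations of Theorem~\ref{thm-FLLN-SIRS}; once that is settled, every remaining ingredient --- the PRM representation, the tightness estimates, the identification of the Gaussian limits, and the continuous-mapping conclusion --- transfers from the proof of Theorem~\ref{thm-FCLT-SEIR} with only notational changes. A secondary routine point is to record that the above two-dimensional linear Volterra system has a unique $D^2$-valued solution depending continuously on its forcing, which follows from the boundedness of all kernels and coefficients by a standard Gr\"onwall/Picard argument.
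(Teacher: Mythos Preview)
Your proposal is correct and is exactly the route the paper takes: it gives no separate proof of this theorem, stating explicitly that the SIRS results follow from those for the SEIR model via the dictionary $E^n\leftrightarrow I^n$, $I^n\leftrightarrow R^n$, $S^n=n-I^n-R^n$, together with the one structural modification you correctly single out (the Poisson infection rate is driven by the first post-infection compartment rather than the second). All of the PRM representation, tightness estimates, finite-dimensional identification, and the continuous-mapping step via the two-dimensional Volterra lemma transfer verbatim, with only the linearized drift needing to be recomputed as you note.
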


\section{Special cases} \label{sec-special}

\subsection{Markovian models} \label{sec-Markovian}

We recall the Markovian SEIR model, 
with independent $\xi_i$ and $\eta_i$ for each $i$, and independent $\xi_j^0$ and $\eta_j^0$ for each $j$,   assuming that $G_0(t) = G(t) = 1-e^{-\gamma t}$ and $F_0(t)=F(t) = 1- e^{-\mu t}$.  It is well know that the FLLN limit $(\bar{S}, \bar{E}, \bar{I}, \bar{R})$ satisfies the following ODEs: 
 \begin{align}\label{SEIR-fluid-M}
\bar{S}'(t) &= -\lambda \bar{S}(t) \bar{I}(t), \quad
\bar{E}'(t) = \lambda \bar{S}(t) \bar{I}(t) - \gamma \bar{E}(t), \quad
\bar{I}'(t) =  \gamma \bar{E}(t) - \mu \bar{I}(t), \quad
\bar{R}'(t) = \mu \bar{I}(t). 
\end{align}
These ODEs are  referred to as the Kermack-McKendrick equations \cite{anderson1992infectious,britton2018stochastic}.

It is easy to see that the FLLN in Theorem \ref{thm-FLLN-SEIR} reduces to the above ODEs in this case. In particular, we obtain 
$$
\bar{E}(t) = \bar{E}(0) e^{-\gamma t} +\lambda \int_0^t e^{-\gamma(t-s)} \bar{S}(s) \bar{I}(s) ds, 
$$
and
\begin{align}
\bar{I}(t) & =  \bar{I}(0) e^{-\mu t} +\bar{E}(0) \int_0^t e^{-\mu (t-s)} \gamma e^{-\gamma s} d s    + \lambda \int_0^t \int_0^{t-s} e^{-\mu(t-s-u)} \gamma e^{-\gamma u} du \bar{S}(s) \bar{I}(s) ds,  \non
\end{align}
which lead to 
\begin{align}
\bar{E}'(t) &= -\gamma e^{-\gamma t} \bar{E}(0) + \lambda \bar{S}(t) \bar{I}(t) + \lambda \int_0^t (-\gamma) e^{-\gamma (t-s)} \bar{S}(s) \bar{I}(s)ds  = \lambda  \bar{S}(t) \bar{I}(t) - \gamma \bar{E}(t), \non
\end{align}
and
\begin{align}
\bar{I}'(t) &=  -\mu  e^{-\mu t} \bar{I}(0) +\bar{E}(0) \gamma e^{-\gamma t} + \bar{E}(0)
\int_0^t (-\mu)e^{-\mu (t-s)} \gamma e^{-\gamma s} d s   \non\\
& \qquad + \lambda \int_0^t \gamma e^{-\gamma (t-s)}  \bar{S}(s) \bar{I}(s) ds +
\lambda \int_0^t \int_0^{t-s} (-\mu) e^{-\mu(t-s-u)} \gamma e^{-\gamma u} d u   \bar{S}(s) \bar{I}(s) ds \non \\
&= \gamma \bar{E}(t) -\mu \bar{I}(t). \non 
\end{align}
Similarly we also get $\bar{R}'(t) = \mu \bar{I}(t)$. 
Together with $\bar{S}'(t) = - \lambda \bar{S}(t) \bar{I}(t)$, we obtain the ODEs in \eqref{SEIR-fluid-M}.

It is well known (see \cite{britton2018stochastic}) that  Theorem~\ref{thm-FCLT-SEIR} holds with the limits $\hat{E}$ and $\hat{I}$ given by
\begin{align} \label{SEIR-hatE-M}
\hat{E}(t) &=\hat{E}(0)  + \lambda  \int_0^t ( \hat{S}(s) \bar{I}(s) + \bar{S}(s) \hat{I}(s) ) ds -  \gamma \int_0^t \hat{E}(s)ds \non\\
& \qquad \qquad  +  B_A\left( \lambda  \int_0^t  \bar{S}(s) \bar{I}(s)ds\right) -B_K\left( \gamma\int_0^t  \bar{E}(s) ds\right), 
\end{align}
and
\begin{align} \label{SEIR-hatI-M}
\hat{I}(t) &=\hat{I}(0)  + \gamma \int_0^t \hat{E}(s)ds - \mu\int_0^t \hat{I}(s) ds  +  B_K\left( \gamma \int_0^t \bar{E}(s) ds \right) - B_L\left( \mu \int_0^t \bar{I}(s) ds \right), 
\end{align}
where $B_A$, $B_K$ and $B_L$ are independent Brownian motions. 
It can be shown that the Volterra stochastic integral equations are equivalent to these linear SDEs in distribution. For brevity, we present the detailed proof for the simpler SIS model in the following proposition.

For the Markovian SIS model with exponential infectious periods of rate $\mu$, we get the limit
\begin{align} \label{SIS-Ihat-M}
\hat{I}(t) &=\hat{I}(0)  +\int_0^t  \Big( \lambda   (1-2 \bar{I}(s)) -  \mu\Big)  \hat{I}(s) ) ds  \non\\
& \qquad \qquad  +  B_A\left( \lambda  \int_0^t  (1- \bar{I}(s))  \bar{I}(s)ds\right) -B_I\left( \mu\int_0^t  \bar{I}(s) ds\right), 
\end{align}
where $B_A$ and $B_I$ are independent Brownian motions.The following Proposition states an equivalence property, whose proof is given in the  Section~\ref{sec-proof-equivalence}.

\begin{prop} \label{prop-SIS-equiv-M}
The expressions of $\hat{I}(t)$ in \eqref{SIS-Ihat} and  \eqref{SIS-Ihat-M}    for the SIS model are equivalent in distribution. 
\end{prop}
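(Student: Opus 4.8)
\emph{Proof proposal.} The plan is to exploit that, when $F_0(t)=F(t)=1-e^{-\mu t}$, the kernel $F^c(t-s)=e^{-\mu(t-s)}$ in \eqref{SIS-Ihat} is the Green's function of $\tfrac{d}{dt}+\mu$; consequently the linear SDE \eqref{SIS-Ihat-M} can be recast, by applying the variation-of-constants formula to its $-\mu\hat I(s)\,ds$ term alone, as a linear Volterra integral equation of exactly the form \eqref{SIS-Ihat}. It then remains only to check that the two driving noise processes agree in law, after which the conclusion follows from uniqueness of the solution of this Volterra equation.

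First I would write \eqref{SIS-Ihat-M} as $d\hat I(t)=\big(\lambda(1-2\bar I(t))-\mu\big)\hat I(t)\,dt+dM(t)$ with initial value $\hat I(0)$, where $M(t):=B_A\big(\lambda\int_0^t(1-\bar I(s))\bar I(s)\,ds\big)-B_I\big(\mu\int_0^t\bar I(s)\,ds\big)$ is a continuous Gaussian martingale (a difference of independent time-changed Brownian motions with deterministic clocks) with $\langle M\rangle_t=\int_0^t a(s)\,ds$, $a(s):=\lambda(1-\bar I(s))\bar I(s)+\mu\bar I(s)$. Solving the linear part $-\mu\hat I(t)\,dt$ by variation of constants gives
$$\hat I(t)=e^{-\mu t}\hat I(0)+\lambda\int_0^t e^{-\mu(t-s)}(1-2\bar I(s))\hat I(s)\,ds+N(t),\qquad N(t):=\int_0^t e^{-\mu(t-s)}\,dM(s).$$
Since $F_0^c(t)=F^c(t)=e^{-\mu t}$, equation \eqref{SIS-Ihat} reads $\hat I(t)=e^{-\mu t}\hat I(0)+\lambda\int_0^t e^{-\mu(t-s)}(1-2\bar I(s))\hat I(s)\,ds+\hat I_0(t)+\hat I_1(t)$. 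Thus both \eqref{SIS-Ihat} and \eqref{SIS-Ihat-M} present their solution as the solution of the same linear Volterra equation of the second kind, with bounded continuous kernel $\lambda e^{-\mu(t-s)}(1-2\bar I(s))$ and forcing of the form $e^{-\mu\cdot}\hat I(0)$ plus a centered Gaussian process independent of $\hat I(0)$ ($\hat I_0+\hat I_1$ in one case, $N$ in the other; recall $B_A,B_I$ are independent of $\hat I(0)$, and $N$ is Gaussian as a Wiener integral against $M$). As the solution map of such a Volterra equation is a continuous functional of the forcing, it suffices to show $\hat I_0+\hat I_1$ and $N$ have the same law, i.e., since both are centered Gaussian, to match their covariance functions.

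For the covariance I would invoke the fluid equation. Differentiating \eqref{SIS-barI} with $F^c=e^{-\mu\cdot}$ yields $\bar I'(t)=\lambda(1-\bar I(t))\bar I(t)-\mu\bar I(t)$, hence $a(s)=\bar I'(s)+2\mu\bar I(s)$ and $e^{2\mu s}a(s)=\tfrac{d}{ds}\big(e^{2\mu s}\bar I(s)\big)$, so that
$$\Cov(N(t),N(t'))=e^{-\mu(t+t')}\int_0^{t\wedge t'}e^{2\mu s}a(s)\,ds=e^{-\mu(t+t')}\big(e^{2\mu(t\wedge t')}\bar I(t\wedge t')-\bar I(0)\big).$$
On the other side, $\hat I_0$ and $\hat I_1$ being independent I add their covariances from the SIS theorem; rewriting \eqref{SIS-barI} as $\lambda\int_0^{\tau}e^{\mu s}(1-\bar I(s))\bar I(s)\,ds=e^{\mu\tau}\bar I(\tau)-\bar I(0)$, a short computation (treating $t\le t'$, then symmetrizing) gives $\Cov(\hat I_0(t)+\hat I_1(t),\hat I_0(t')+\hat I_1(t'))=e^{-\mu(t+t')}\big(e^{2\mu(t\wedge t')}\bar I(t\wedge t')-\bar I(0)\big)$ as well. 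Hence $N\deq\hat I_0+\hat I_1$, and by uniqueness of the Volterra equation the processes in \eqref{SIS-Ihat} and \eqref{SIS-Ihat-M} coincide in law; in particular, if $\hat I(0)$ is Gaussian then so is the common process.

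The variation-of-constants step and the two covariance computations are routine; the only genuine point — the one I would double-check — is the covariance matching, where the fluid ODE for $\bar I$ must be used precisely to telescope $\int_0^{t\wedge t'}e^{2\mu s}a(s)\,ds$ and to re-express $\lambda\int_0^{\tau}e^{\mu s}(1-\bar I(s))\bar I(s)\,ds$; a stray sign or factor of $\mu$ there would break the identity, and this is exactly where the special exponential structure makes the two representations agree. I would also note that, $F_0$ being continuous here, $\hat I_0$, $\hat I_1$ and $N$ all have continuous sample paths, so the whole comparison is between continuous Gaussian processes and equality of covariance functions does give equality in law.
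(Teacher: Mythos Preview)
Your argument is correct and, in fact, somewhat cleaner than the paper's. Both proofs reduce to the same core: show that the two representations solve the \emph{same} linear equation with forcings that agree in law, then invoke uniqueness. The paper, however, runs the reduction in the opposite direction: starting from the Volterra form \eqref{SIS-Ihat}, it decomposes $\hat I$ into $\hat I(0)F_0^c + \lambda\int_0^\cdot F^c(\cdot-s)(1-2\bar I)\hat I\,ds + \hat I_0 + \hat I_1$, shows that each of the four pieces separately satisfies a linear ODE of the form $X'= -\mu X + \text{(noise/drift)}$, and so obtains that $\hat I$ solves the SDE \eqref{SIS-Ihat-M} with driving noise $W_0+W_1$; the bulk of the work is then a four-term expansion verifying that the covariance of $\hat I_1$ matches that of the Ornstein--Uhlenbeck transform of $W_1$. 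Your route---applying variation of constants to \eqref{SIS-Ihat-M} to land immediately on the Volterra form, and then matching $\Cov(N(t),N(t'))$ with $\Cov(\hat I_0+\hat I_1)(t,t')$---avoids that decomposition entirely, and your telescoping identity $e^{2\mu s}a(s)=\tfrac{d}{ds}\big(e^{2\mu s}\bar I(s)\big)$ together with $\lambda\int_0^\tau e^{\mu s}(1-\bar I)\bar I\,ds=e^{\mu\tau}\bar I(\tau)-\bar I(0)$ makes the covariance check a two-line computation rather than a page. What the paper's direction buys is an explicit identification of the martingale increments $W_0,W_1$ driving $\hat I_0,\hat I_1$ individually, which could be useful elsewhere; your direction buys brevity and makes transparent \emph{why} the exponential kernel is exactly what is needed.
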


\subsection{Deterministic infectious periods} \label{sec-deterministic}

When the infectious periods are deterministic, that is, $\eta_i$ is equal to a positive constant $\eta$ with probability one,  it is natural to assume that the remaining infectious duration for the initially infected individual at time zero has a uniform distribution on the interval $[0,\eta]$, that is, $F_0(t)=t/\eta$ for $t \in [0, \eta]$. In fact, $F_0(t)=t/\eta$ is the equilibrium (stationary excess) distribution (see the definition in \eqref{eqn-Fe}) of the deterministic distribution $F(t) = \bone(t\ge \eta)$ for $t \in [0, \eta]$. We can write down the explicit expressions for the FLLN limits in all the models discussed in the paper. We use the SIRS model to illustrate below. 

In the SIRS model, suppose both the infectious and immune times are deterministic, taking values $\xi$ and $\eta$, respectively. 
The remaining infectious and immune times of the initially infected and immune individuals at time 0, $\xi_j^0$ and $\eta^0_j$, have uniform distributions on the intervals $[0, \xi]$ and $[0,\eta]$, respectively. 
That is, $G(t) = \bone(t\ge \xi)$, $F(t)= \bone(t\ge \eta)$,  for $t\ge 0$,   $G_0 (t) = t/\xi$ for $t\in [0, \xi]$ and $F_0(t)=t/\eta$ for $t \in [0, \eta]$. Thus we have  $\Psi_0(t) = \xi^{-1} \int_0^t \bone(t-u < \eta) d u = \xi^{-1}(t- (t-\eta)^+)$, and 
$\Psi(t) = \bone(\xi \le t < \xi+\eta)$ for $t\ge 0$. 
We can write 
\begin{align}
I^n(t) &= \sum_{j=1}^{I^n(0)}\bone(\xi^0_j >t) +A^n(t) - A^n((t-\xi)^+), \non\\
R^n(t) &= \sum_{j=1}^{R^n(0)}\bone(\eta^0_j >t) +  \sum_{j=1}^{I^n(0)}\bone((t- \eta)^+ <\xi^0_j \le t)   + A^n((t-\xi)^+) - A^n((t-\xi-\eta)^+). \non
\end{align}
In the FLLN, we have the deterministic equations (ODEs with delay): 
\begin{align}
\bar{I}(t) &= \bar{I}(0) (1-t/\xi)^+ + \lambda \int_{((t-\xi)^+, t]} (1-\bar{I}(s) - \bar{R}(s)) \bar{I}(s) ds,  \non\\
\bar{R}(t) &= \bar{R}(0) (1-t/\eta)^+ + \bar{I}(0) \xi^{-1} (t-(t-\eta)^+) +  \lambda \int_{((t-\xi-\eta)^+, (t-\xi)^+]} (1-\bar{I}(s) - \bar{R}(s)) \bar{I}(s) ds. \non
\end{align}
In the FCLT, we obtain 
\begin{equation}
\hat{I}(t) =  \hat{I}(0) (1-t/\xi)^+ + \lambda \int_{((t-\xi)^+, t]} \left( - \hat{I}(s) \bar{R}(s) + (1-\bar{I}(s) - 2 \bar{R}(s)) \hat{R}(s) \right) ds  + \hat{I}_0(t) + \hat{I}_1(t), \non 
\end{equation}
\begin{align*}
\hat{R}(t) & =\hat{R}(0)  (1-t/\eta)^+ +  \hat{I}(0) \xi^{-1} (t-(t-\eta)^+)  \non\\
& \quad +  \lambda \int_{((t-\xi-\eta)^+, (t-\xi)^+]}   \left( - \hat{I}(s) \bar{R}(s) + (1-\bar{I}(s) - 2 \bar{R}(s)) \hat{R}(s) \right)  ds   +   \hat{R}_{0,1}(t) + \hat{R}_{0,2}(t) +  \hat{R}_{1}(t),  
\end{align*}
where $\bar{I}$ and $\bar{R}$ are the fluid equations given above, and 
$\hat{I}_0(t)$, $\hat{I}_1(t)$, $ \hat{R}_{0,1}(t)$, $ \hat{R}_{0,2}(t)$ and $ \hat{R}_{1}(t)$ have the covariance functions: for $t, t'\ge 0$, 
\begin{align}
\Cov(\hat{I}_0(t),\hat{I}_0(t')) &= \bar{I}(0) ((1-t\vee t'/\xi)^+ - (1-t/\xi)^+ (1-t'/\xi)^+  ),  \non\\
\Cov(\hat{I}_1(t),\hat{I}_1(t')) &= \lambda \int_0^{t\wedge t'} \bone(t\vee t' -s <\xi) (1-\bar{I}(s) - \bar{R}(s)) \bar{I}(s) ds, \non\\
\Cov(\hat{R}_{0,1}(t),\hat{R}_{0,1}(t')) &= \bar{R}(0)  ((1-t\vee t'/\eta)^+ - (1-t/\eta)^+ (1-t'/\eta)^+  ), \non\\
\Cov(\hat{R}_{0,2}(t),\hat{R}_{0,2}(t'))  &= \bar{I}(0)  \xi^{-1}[ (t\vee t'- (t\vee t'-\eta)^+) -  (t- (t-\eta)^+) (t'- (t'-\eta)^+) ] , \non\\
\Cov(\hat{R}_{1}(t),\hat{R}_{1}(t'))  &= \lambda  \int_0^{t\wedge t'} \bone ( \xi \le t\vee t'-s < \xi +\eta)(1-\bar{I}(s) - \bar{R}(s)) \bar{I}(s) ds, \non
\end{align}
and similarly for the covariances between them.

\subsection{Equilibrium analysis for the SIS and SIRS models} \label{sec-equilibrium}

For a general distributions $F$ on $\RR_+$, its equilibrium  (stationary excess) distribution is defined by 
\begin{equation} \label{eqn-Fe}
F_e(t) := \frac{\int_0^t F^c(s)ds}{\int_0^\infty F^c(s) ds} = \mu\int_0^t F^c(s)ds, \quad t\ge 0,
\end{equation}
where $\mu^{-1} = \int_0^\infty F^c(s) ds$ is the mean of $F$. 

For the SIS model, in the Markovian case with $F_0(t) = F(t) = 1- e^{-\mu t}$, it is well known that  the ODE for $\bar{I}$, $\bar{I}' = \lambda (1-\bar{I}) \bar{I} - \mu \bar{I}$, has two equilibria, $\bar{I}^*=0$ or $\bar{I}^* = 1- \mu/\lambda$ if $\mu < \lambda$. 
For a general distributions $F$, if $F_0=F_e$,  by \eqref{SIS-barI}, 
an equilibrium $\bar{I}^*$ must satisfy 
\[ \bar{I}^*=\mu\bar{I}^*\int_t^\infty F^c(s)ds+\lambda\bar{I}^*(1-\bar{I}^*)\int_0^tF^c(s)ds,\]
hence either $\bar{I}^*=0$, or else by differentiating the last expression we find again $\bar{I}^*=1-\mu/\lambda$.

For the SIRS model, we obtain the following proposition for the nontrivial equilibrium point. 
\begin{prop} \label{prop-SIRS-equilibrium}
In the SIRS model with independent infectious and immune times,  assuming
$\E[\xi_1]= \gamma^{-1}$ and $\E[\eta_1]= \mu^{-1}$ satisfy $\lambda>\gamma$, 
  if $G_0(t)= G_e(t) $ and $F_0(t) =F_e(t)$, 
  there exists a unique nontrivial equilibrium $(\bar{S}^*,\bar{I}^*,\bar{R}^*)$, given by
\begin{equation}\label{eqn-SIRS-eql}
\bar{S}^*=\frac{\gamma}{\lambda}, \quad \bar{I}^*=\frac{1-\gamma/\lambda}{1+\gamma/\mu}, \quad \text{and} \quad \bar{R}^*=\frac{\gamma}{\mu} \bar{I}^* .
\end{equation}
\end{prop}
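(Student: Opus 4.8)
The plan is to look for a constant solution $(\bar I(t),\bar R(t))\equiv(\bar I^*,\bar R^*)$, $t\ge0$, of the Volterra system \eqref{SIRS-barE}--\eqref{SIRS-barI} with matching initial data $\bar I(0)=\bar I^*$, $\bar R(0)=\bar R^*$, taking $G_0=G_e$ and $F_0=F_e$; such a constant solution is by definition an equilibrium, and ``nontrivial'' means $\bar I^*\neq 0$, so throughout we may divide by $\bar I^*$. Write $\bar S^*:=1-\bar I^*-\bar R^*$. The mechanism is that the equilibrium choice of the initial laws, together with the elementary substitutions $\int_0^tG^c(t-s)\,ds=\int_0^tG^c(r)\,dr$, $G_e^c(t)=1-\gamma\int_0^tG^c(r)\,dr$ (and the analogues with $F,\mu$, recalling $\gamma^{-1}=\int_0^\infty G^c$, $\mu^{-1}=\int_0^\infty F^c$ and $dG_e(u)=\gamma G^c(u)\,du$), reduces each of the two Volterra identities to a scalar identity in $t$, which in turn forces one algebraic relation among $\bar S^*,\bar I^*,\bar R^*$.

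First I would handle \eqref{SIRS-barE}. Substituting the constants and using the identities above collapses it to
\[
0=(\lambda\bar S^*-\gamma)\int_0^tG^c(r)\,dr\qquad\text{for all }t\ge0.
\]
Since $\xi_1$ has finite positive mean, $\int_0^tG^c(r)\,dr=\E[\xi_1\wedge t]>0$ for every $t>0$, so this holds iff $\lambda\bar S^*=\gamma$, i.e. $\bar S^*=\gamma/\lambda$; the hypothesis $\lambda>\gamma$ is exactly what makes $\bar S^*\in(0,1)$, so that a nontrivial equilibrium can live in the simplex.

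Next I would handle \eqref{SIRS-barI}. Using $\lambda\bar S^*=\gamma$, $F_e^c(t)=1-\mu\int_0^tF^c(r)\,dr$, $dG_e(u)=\gamma G^c(u)\,du$, and $\int_0^t\Psi(t-s)\,ds=\int_0^t\Psi(r)\,dr$, it reduces to
\[
\mu\bar R^*\int_0^tF^c(r)\,dr=\bar I^*\Big(\Psi_0(t)+\gamma\int_0^t\Psi(r)\,dr\Big),\qquad t\ge0.
\]
The crux is the convolution identity $\Psi_0(t)+\gamma\int_0^t\Psi(r)\,dr=\gamma\int_0^tF^c(r)\,dr$. To see it, write $\Psi_0(t)=\gamma\int_0^tF^c(t-u)G^c(u)\,du$ from $dG_e=\gamma G^c\,du$; apply Fubini to $\gamma\int_0^t\Psi(r)\,dr=\gamma\int_0^t\!\int_0^{t-v}F^c(w)\,dw\,dG(v)$ and a Stieltjes integration by parts to rewrite it as $\gamma\int_0^tF^c(t-u)G(u)\,du$ (here one uses that $\xi_1>0$ a.s., so $G$ has no atom at $0$); then add, and the cancellation $G^c(u)+G(u)=1$ leaves $\gamma\int_0^tF^c(t-u)\,du$. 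Since $\int_0^tF^c(r)\,dr>0$ for $t>0$, the displayed identity forces $\mu\bar R^*=\gamma\bar I^*$, i.e. $\bar R^*=(\gamma/\mu)\bar I^*$.

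Finally, inserting $\bar S^*=\gamma/\lambda$ and $\bar R^*=(\gamma/\mu)\bar I^*$ into $\bar S^*+\bar I^*+\bar R^*=1$ gives $\bar I^*(1+\gamma/\mu)=1-\gamma/\lambda$, hence the stated values of $\bar I^*,\bar R^*,\bar S^*$; since $\lambda>\gamma$ all three lie in $(0,1)$, so this is indeed an equilibrium, and because each reduction above was an equivalence it is the only nontrivial one. The main obstacle is the convolution identity in the $\bar R$-equation: one has to be careful with the order of integration and with the Stieltjes integration by parts (and the no-atom-at-$0$ convention) so that the $G^c+G$ cancellation is clean; once that identity is in hand, everything else is linear algebra.
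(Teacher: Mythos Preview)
Your proposal is correct and follows essentially the same route as the paper: both reduce the two Volterra equations at constant $(\bar I^*,\bar R^*)$ to the scalar relations $\lambda\bar S^*=\gamma$ and $\mu\bar R^*=\gamma\bar I^*$, and both hinge on the same convolution identity $\gamma^{-1}\Psi_0(t)+\int_0^t\Psi(s)\,ds=\int_0^tF^c(s)\,ds$, proved via Fubini and a Stieltjes integration by parts. The only cosmetic differences are that the paper obtains $\lambda\bar S^*=\gamma$ by differentiating in $t$ (whereas you argue via positivity of $\int_0^tG^c$), and the paper verifies the convolution identity by first writing $\gamma^{-1}\Psi_0(t)=\int_0^tF^c(s)\,ds-\int_0^tF^c(t-u)G(u)\,du$ and then showing the last integral equals $\int_0^t\Psi(s)\,ds$, rather than adding the two pieces directly as you do; neither difference is substantive.
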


\begin{proof}
We prove the following two identities: 
\begin{align}
 \lambda(1-\bar{I}^*-\bar{R}^*)&=\gamma,  \label{SIRS-eqlb-one}\\
 \mu \bar{R}^* &= \gamma \bar{I}^*.  \label{SIRS-eqlb-two}
\end{align}
From these, by the identity $\bar{S}^*+\bar{I}^*+\bar{R}^*=1$, we obtain \eqref{eqn-SIRS-eql}. (The equations \eqref{SIRS-eqlb-one} and \eqref{SIRS-eqlb-two} are easily seen from the ODEs in the Markovian case.) 
By the equations for $\bar{I}(t)$ in \eqref{SIRS-barE}  and $\bar{R}(t)$ in \eqref{SIRS-barI}, 
the equilibrium quantities must satisfy 
 \begin{align*} 
 \bar{I}^*&=\gamma\bar{I}^*\int_t^\infty G^c(s)ds+\lambda\bar{I}^*(1-\bar{I}^*-\bar{R}^*)\int_0^tG^c(s)ds,\\
 \bar{R}^*&= \mu\bar{R}^* \int_t^\infty F^c(s)ds + \bar{I}^*\Psi_0(t) + \lambda (1-\bar{I}^* -\bar{R}^*)  \bar{I}^*   \int_0^t \Psi(s)  ds .  
 \end{align*}
 This system has the trivial solution $\bar{I}^*= \bar{R}^*=0$. We now look for another solution. Dividing the first identity by $\bar{I}^*$ and differentiating, we 
 obtain  \eqref{SIRS-eqlb-one}, and the second identity becomes
\[ \bar{R}^*= \mu\bar{R}^* \int_t^\infty F^c(s)ds + \bar{I}^*\Psi_0(t) + \gamma  \bar{I}^*   \int_0^t \Psi(s)  ds\, .\]
\eqref{SIRS-eqlb-two} now follows from the identity $\gamma^{-1}\Psi_0(t)+\int_0^t\Psi(s)ds=\int_0^t F^c(s)ds$.
To verify this, first note that from the definitions of $\Psi_0$ in the independent case, and of $G_0$,
\begin{align*}
\gamma^{-1}\Psi_0(t)=\int_0^tF^c(t-u)G^c(u)du=\int_0^tF^c(s)ds-\int_0^tF^c(t-u)G(u)du\,.
\end{align*}
It remains to note that by integration by parts and interchange of orders of integration
\begin{align*}
\int_0^tF^c(t-u)G(u)du&=\int_0^t\int_0^{t-u}F^c(v)dv dG(u) =\int_0^t\int_u^tF^c(v-u)dv dG(u)\\
&=\int_0^t\int_0^vF^c(v-u)dG(u)dv=\int_0^t\Psi(s)ds
\end{align*}
This completes the proof.
\end{proof}

For the FCLT in the SIRS model, if the system starts from the equilibrium, then  we can define the diffusion-scaled processes $\hat{I}^n= \sqrt{n} (\bar{I}^n- \bar{I}^*)$ and $\hat{R}^n= \sqrt{n} (\bar{R}^n- \bar{R}^*)$ and the FCLT holds with the limit processes $\hat{I}$ and $\hat{R}$ as given in Theorem \ref{thm-FCLT-SIRS} where the fluid limits $\bar{I}$ and $\bar{R}$ are replaced by $\bar{I}^*$ and $\bar{R}^*$. The same is true for the FCLT in the SIS model starting from the equilibrium.

\section{Proof of the FLLN for the SIR model}  \label{sec-SIR-FLLN-proof}

In this section we prove Theorem~\ref{thm-FLLN-SIR}.

We write the process $\bar{A}^n$ as 
\begin{equation}\label{barAn-rep}
\bar{A}^n(t) =\frac{1}{\sqrt{n}} \hat{M}^n_A(t)  +  \bar \Lambda^n(t), 
\end{equation}
where
$$
\bar{\Lambda}^n(t) := \lambda  \int_0^t  \bar{S}^n(s) \bar{I}^n(s) ds,
$$
and
\begin{align}\label{hatMn-def}
\hat{M}^n_A(t) :=  \frac{1}{\sqrt{n}} \left(A_*\left( n \bar\Lambda^n(t) \right) -  n \bar\Lambda^n(t) \right). 
\end{align}
The process $\{\hat{M}^n_A(t): t\ge 0\}$ is a square-integrable martingale with respect to the filtration $\{\sF^n_t: t \ge 0\}$ defined by 
\begin{align}
\sF^n_t := \sigma\left\{I^n(0), A_*\left(n \bar{\Lambda}^n(u) \right): 0 \le u \le t \right\}, \non
\end{align}
with the predictable quadratic variation 
\begin{align} \label{hatMn-QV}
\langle \hat{M}^n_A\rangle (t) =   \bar{\Lambda}^n(t),  \quad t \ge 0. 
\end{align}
These properties are straightforward to verify; see, e.g. \cite{pang2007martingale} or \cite{britton2018stochastic}. 
Note that   by the simple bound 
\begin{equation} \label{barSIn-bound}
\bar{S}^n(t) \le 1,\quad \bar{I}^n(t) \le 1, \quad \forall t \ge 0,  
\end{equation}
we have, w.p.1., for $0 < s \le t$,
\begin{equation} \label{barLambda-n-Lip}
0 \le \bar{\Lambda}^n(t) -  \bar{\Lambda}^n(s) \le \lambda (t-s). 
\end{equation}

\begin{lemma}\label{le:tightS} 
The sequence $\{(\bar{A}^n, \bar{S}^n): n \ge 1\}$ is tight in  $D^2$. 
\end{lemma}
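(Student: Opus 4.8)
The plan is to establish tightness of $\{(\bar{A}^n, \bar{S}^n): n \ge 1\}$ in $D^2$ by showing that each coordinate is tight and, in fact, that both sequences are asymptotically continuous (indeed asymptotically Lipschitz up to a vanishing martingale term), which is more than enough for $J_1$-tightness. The starting point is the decomposition \eqref{barAn-rep}, $\bar{A}^n(t) = n^{-1/2}\hat{M}^n_A(t) + \bar{\Lambda}^n(t)$, together with the relation $\bar{S}^n(t) = \bar{S}^n(0) - \bar{A}^n(t)$, so that $\bar{S}^n$ and $\bar{A}^n$ differ only by the random constant $\bar{S}^n(0) \in [0,1]$; hence it suffices to treat $\bar{A}^n$.

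First I would handle the drift part $\bar{\Lambda}^n$. By \eqref{barLambda-n-Lip}, $\bar{\Lambda}^n$ is (deterministically, on the whole sample space) Lipschitz with constant $\lambda$ and satisfies $\bar{\Lambda}^n(0)=0$, so $\{\bar{\Lambda}^n\}$ is a sequence of uniformly Lipschitz functions vanishing at $0$; this family is relatively compact in $C([0,T],\RR)$ by the Arzelà--Ascoli theorem, hence tight in $D([0,T],\RR)$. Second I would handle the martingale part. Since $\langle \hat{M}^n_A\rangle(t) = \bar{\Lambda}^n(t) \le \lambda t \le \lambda T$ uniformly in $n$ and $t\in[0,T]$, the Doob/Burkholder--Davis--Gundy inequality gives $\E\big[\|\hat M^n_A\|_T^2\big] \le 4\,\E[\langle \hat{M}^n_A\rangle(T)] \le 4\lambda T$, so $n^{-1/2}\hat{M}^n_A$ has second moment of order $1/n$ and therefore $n^{-1/2}\|\hat M^n_A\|_T \to 0$ in probability (and in $L^2$). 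Thus $\bar{A}^n$ is a uniformly Lipschitz process perturbed by a term that is uniformly small on $[0,T]$, which yields tightness in $D$ (for instance via the modulus-of-continuity criterion: $w_T(\bar A^n,\delta) \le \lambda\delta + 2 n^{-1/2}\|\hat M^n_A\|_T$, whose $\sup$ over $n$ can be made small by first choosing $\delta$ small and then $n$ large, together with the trivial bound $\|\bar A^n\|_T \le 1$ that controls the compact-containment condition). Joint tightness of the pair in $D^2$ then follows from coordinatewise tightness since $D^2$ carries the product topology.

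I do not anticipate a genuine obstacle here: the key structural facts — the martingale decomposition, the a priori bound $\bar S^n, \bar I^n \le 1$, and the resulting uniform Lipschitz bound \eqref{barLambda-n-Lip} on $\bar\Lambda^n$ — are all already in hand from the excerpt, and the argument is a standard tightness-via-uniform-modulus-of-continuity argument. The only point requiring a little care is that $\bar{A}^n$ itself need not be continuous (it jumps by $1/n$), so one should phrase the modulus-of-continuity / oscillation estimate for $\bar A^n$ directly rather than pretending it lives in $C$; the jump sizes being $O(1/n)$ makes this harmless. If one prefers, one can instead note that $\bar A^n = n^{-1/2}\hat M^n_A + \bar\Lambda^n$ where the first summand $\RA 0$ uniformly and the second is $C$-tight, and invoke the fact that a sum of a $C$-tight sequence and a sequence converging uniformly to $0$ is tight in $D$.
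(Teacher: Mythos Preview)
Your proposal is correct and follows essentially the same route as the paper: both use the decomposition \eqref{barAn-rep}, the uniform Lipschitz bound \eqref{barLambda-n-Lip} on $\bar\Lambda^n$, and the fact that $n^{-1/2}\hat M^n_A\to 0$ (the paper obtains this by citing \cite[Lemma~5.8]{pang2007martingale} from the quadratic-variation bound, whereas you derive it directly via Doob's inequality), and then transfer tightness to $\bar S^n$ via $\bar S^n = 1 - \bar I^n(0) - \bar A^n$. Your self-contained treatment of the martingale term and explicit modulus-of-continuity estimate are a slightly more detailed version of the same argument.
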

\begin{proof}
By \eqref{barLambda-n-Lip},
 we have $\langle \hat{M}^n_A\rangle (t) \le  \lambda t $,  w.p.1. 
Thus, by \cite[Lemma 5.8]{pang2007martingale}, the martingale $\{\hat{M}^n_A(t): t\ge 0\}$ is stochastically bounded in $D$.  Then by \cite[Lemma 5.8]{pang2007martingale}, we have 
 \begin{equation} \label{hatMnA-conv}
\frac{1}{\sqrt{n}} \hat{M}^n_A \RA 0 \qinq D   \qasq n \to\infty. 
\end{equation}
Then, by \eqref{barAn-rep}, the tightness of the sequence $\{\bar{A}^n: n \ge 1\}$ follows directly by \eqref{barLambda-n-Lip}. Since $\bar{S}^n=1 - \bar{I}^n(0) -\bar{A}^n$, we obtain the tightness of $\{\bar{S}^n:n\ge 1\}$ in $D$ immediately.  
\end{proof}

\medskip

We work with a convergent subsequence of $(\bar{A}^n, \bar{S}^n)$. 
We denote the limit of $\bar{A}^n$ along the subsequence by $\bar{A}$.  It is clear from \eqref{barAn-rep} that the limit $\bar{A}$ satisfies
\begin{equation} \label{barAn-conv1}
\bar{A} = \lim_{n\to\infty} \bar{A}^n =  \lim_{n\to\infty} \bar{\Lambda}^n = \lim_{n\to\infty} \lambda \int_0^{\cdot} \bar{S}^n(s)\bar{I}^n(s) ds, 
\end{equation}
and for $0 < s \le t$, w.p.1, 
\begin{equation} \label{barA-Lip}
0 \le \bar{A}(t) - \bar{A}(s) \le \lambda (t-s). 
\end{equation}
By definition and Assumption~\ref{AS-SIR-1}, we have 
\begin{equation}\label{barSn-conv1}
\bar{S}^n = 1- \bar{I}^n(0) - \bar{A}^n \RA \bar{S}= 1- \bar{I}(0) - \bar{A}\  \text{ in } D, \qasq n\to\infty. 
\end{equation}

We next consider the process $\bar{I}^n$. Recall the expression of $I^n$ in \eqref{eqn-SIR-In}. 
Let 
$$
\bar{I}_0^n(t)  := \frac{1}{n} \sum_{j=1}^{n\bar{I}^n(0)}  \bone(\eta^0_j > t), 
\qandq
\breve{I}_0^n(t)  := \frac{1}{n} \sum_{j=1}^{n\bar{I}(0)}  \bone(\eta^0_j > t), \quad t\ge 0.
$$
We clearly have
\begin{equation}\label{barIn0-rep}
\left| \bar{I}_0^n(t) - \breve{I}_0^n(t) \right| \le \frac{1}{n} \sum_{j=n(\bar{I}^n(0) \wedge \bar{I}(0))}^{n(\bar{I}^n(0) \vee \bar{I}(0))}  \bone(\eta^0_j > t), \quad  t\ge 0.
\end{equation}
Note that by Assumption~\ref{AS-SIR-1}, the right--hand side satisfies
\begin{equation}\label{barIn0-breveIn0-conv}
\E\left[\frac{1}{n} \sum_{j=n(\bar{I}^n(0) \wedge \bar{I}(0))}^{n(\bar{I}^n(0) \vee \bar{I}(0))}  \bone(\eta^0_j > t) \Big| \sF^n_0\right] \le F_0^c(t) |\bar{I}^n(0) - \bar{I}(0)|  \to 0
\end{equation}
in probability as $n\to\infty$. 
Thus, by the FLLN of empirical processes  (that is, for a sequence of i.i.d. random variables $\{\xi_i\}$ with c.d.f. $F$,  $ n^{-1}\sum_{i=1}^{n} \bone_{\xi_i \le t} \to F(t)$ in $D$ in probability as $n\to \infty$; this follows from the FCLT in Theorem 14.3 in \cite{billingsley1999convergence}), we obtain that in probability,
\begin{align} \label{barIn0-conv}
\bar{I}_0^n   \to \bar{I}_0 = \bar{I}(0) F_0^c(\cdot) \qinq D \qasq n \to\infty. 
\end{align}

Let 
$$
\bar{I}_1^n(t):= \frac{1}{n}  \sum_{i=1}^{n\bar{A}^n(t)} \bone(\tau^n_i + \eta_i >t), \quad t\ge 0, 
$$
and its conditional expectation
$$
\breve{I}_1^n(t) := \E[\bar{I}_1^n(t)|\sF^n_t]  = \frac{1}{n}  \sum_{i=1}^{n\bar{A}^n(t)} F^c(t-\tau^n_i ) = \int_0^t F^c(t-s) d\bar{A}^n(s), \quad t\ge 0.
$$
By integration by parts, we have
\begin{align}
\breve{I}_1^n(t)  = \bar{A}^n(t) - \int_0^t \bar{A}^n(s) d F^c(t-s). \non 
\end{align}
Here $d F^c(t-s)$ is the differential of the map $s\to F^c(t-s)$. 
By the continuous mapping theorem applied to the map $x\in D \to  x-\int_0^{\cdot}  x(s)dF^c(\cdot -s) \in D$, exploiting the fact that 
$\bar{A}=\lim\bar{A}^n\in C$ a.s., 
\begin{equation} \label{breve-In1-conv}
\breve{I}_1^n\to \bar{I}_1 \qinq D
\end{equation}
in probability as $n\to\infty$,  where
\begin{align}
\bar{I}_1(t)&=\bar{A}(t)- \int_0^t \bar{A}(s) d F^c(t-s)=\int_0^t F^c(t-s)d\bar{A}(s)\,, \quad t \ge 0\,. \non
\end{align}

Let 
$$
V^n(t) := \bar{I}_1^n(t) - \breve{I}_1^n(t) =  \frac{1}{n}  \sum_{i=1}^{n\bar{A}^n(t)} \chi^n_i(t), \quad t\ge 0,
$$
where
$$
\chi^n_i(t) :=  \bone(\tau^n_i + \eta_i >t) - F^c(t-\tau^n_i ). 
$$

We next show the following lemma.

\begin{lemma} \label{lem-Vn-SIR-conv}
For any $\ep>0$,
\begin{equation} \label{supVn-conv}
\P\left(\sup_{t \in [0,T]} |V^n(t)| \ge \ep\right) \to 0 \qasq n \to \infty. 
\end{equation}
\end{lemma}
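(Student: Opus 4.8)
The plan is to show that $V^n$ converges to zero uniformly on $[0,T]$ in probability by controlling the conditional second moment of $V^n(t)$ and then upgrading the pointwise estimate to a uniform one via a maximal inequality. First I would observe that, conditionally on $\sF^n_t$, the summands $\chi^n_i(t)$ are centered; moreover, for the full sum up to a \emph{deterministic} index $m$, the process $t\mapsto \sum_{i=1}^{m}\chi^n_i(t)$ evaluated along the event times behaves like a sum of conditionally independent bounded terms, so that $\E[(\sum_{i=1}^{m}\chi^n_i(t))^2\mid \sF^n_t]\le m\cdot \tfrac14$ using $|\chi^n_i(t)|\le 1$ and the martingale/orthogonality structure in $i$. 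The key subtlety is that the upper summation limit $n\bar A^n(t)$ is itself random and $\sF^n_t$-measurable, and $\bar A^n(t)\le \lambda t$ a.s.\ by \eqref{barLambda-n-Lip}; combining these, one gets a bound of the form $\E[V^n(t)^2]\le C_T/n$ for a constant $C_T$ depending only on $\lambda$ and $T$, which already gives $V^n(t)\to 0$ in probability for each fixed $t$.

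Next I would promote this to the supremum over $t\in[0,T]$. The cleanest route is to recognize $V^n$ as (a time-change of) a martingale in an appropriate two-parameter sense, or, more concretely, to introduce the auxiliary process obtained by replacing the random index $n\bar A^n(t)$ with the Poisson clock driving $A^n$: write $\tau^n_i$ as the $i$-th jump of $A^n$, and consider, for fixed $n$, the process indexed jointly by the jump number and by $t$. For each fixed $t$, $\{\sum_{i=1}^{k}\chi^n_i(t): k\ge 0\}$ is an $\sF^n$-martingale in $k$ (the $\chi^n_i(t)$ are martingale differences once we condition on the arrival times and use independence of the $\eta_i$), so Doob's inequality handles the fluctuation in the number of summands; the remaining fluctuation in $t$ can be controlled by partitioning $[0,T]$ into $O(n)$ subintervals, bounding the oscillation of $\breve I^n_1$ and of $\bar I^n_1$ on each subinterval (each has jumps of size $1/n$ and is monotone-like in the relevant sense), and applying the pointwise $L^2$ bound at the grid points together with a union bound. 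Alternatively, and perhaps more in the spirit of the paper, one can invoke the PRM representation: $V^n$ equals a stochastic integral of a bounded predictable integrand against a compensated Poisson random measure on $\RR_+^2$ with intensity dominated by $\lambda\,ds\,F(dv)$ truncated to $[0,T]$, and then standard $L^2$ maximal inequalities for such integrals (e.g.\ the Doob/BDG inequality for the resulting martingale) give $\E[\sup_{t\le T}|V^n(t)|^2]\le C_T/n\to 0$, which implies \eqref{supVn-conv} at once.

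The main obstacle I anticipate is the bookkeeping around the random upper limit $n\bar A^n(t)$: one must be careful that $\bar A^n$ is $\sF^n_t$-adapted and of bounded variation with the a.s.\ Lipschitz bound $\lambda$, so that conditioning is legitimate and the ``number of terms'' never exceeds $\lambda n T$; and one must make sure that the centering $F^c(t-\tau^n_i)$ is exactly the conditional mean of $\bone(\tau^n_i+\eta_i>t)$ given $\sF^n_t$ (which requires independence of $\eta_i$ from $\sF^n_t$, guaranteed by the independence assumptions on $A_*$, $I^n(0)$, $\{\eta^0_j\}$, $\{\eta_i\}$). Once these measurability and independence points are pinned down, the second-moment computation and the maximal inequality are routine. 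I would therefore structure the proof as: (i) verify the conditioning setup and the a.s.\ bound on $\bar A^n$; (ii) compute the conditional $L^2$ bound giving $\E[V^n(t)^2]\le C_T/n$; (iii) apply a martingale maximal inequality (in the jump index) plus a grid argument in $t$, or the PRM stochastic-integral maximal inequality, to obtain $\E[\sup_{t\le T}|V^n(t)|^2]\to 0$; (iv) conclude \eqref{supVn-conv} by Markov's inequality.
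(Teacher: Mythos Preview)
Your pointwise $L^2$ bound is essentially the paper's: conditional orthogonality of the $\chi^n_i(t)$ gives $\E[V^n(t)^2]\le \lambda t/n$. (One small slip: $\bar A^n(t)\le\lambda t$ is \emph{not} an a.s.\ bound---\eqref{barLambda-n-Lip} is for $\bar\Lambda^n$, not $\bar A^n$; the paper gets the constant $\lambda t$ by taking expectation, using that the $d\hat M^n_A$ integral has mean zero.)

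The gap is in the passage to the supremum. Neither of your two routes works as stated. For the PRM route, $V^n$ is \emph{not} a martingale in $t$: in the $M_1$-representation the integration region is $[0,t]\times[t,\infty)$, which is not monotone in $t$, and if you try to split off the martingale piece over $[0,t]\times[0,t]$ you end up with $\hat R^n_1$, which (as the paper notes explicitly after Lemma~\ref{lem-hat-tilde-diff-SIR}) is not a martingale because the intensity $\lambda n\bar S^n\bar I^n$ is not $\sG^R$-adapted. So Doob/BDG does not deliver $\E[\sup_t|V^n(t)|^2]\le C/n$. For the grid route, $O(n)$ subintervals combined with the pointwise bound $\P(|V^n(t_i)|>\ep)\le C/(n\ep^2)$ and a union bound gives only $O(1)$, not $o(1)$; and neither $\bar I^n_1$ nor $\breve I^n_1$ is monotone, so ``monotone-like'' oscillation control is not available without further work.

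What the paper does instead is take a \emph{fixed} mesh $\delta$ (independent of $n$), use the pointwise bound at the $T/\delta$ grid points, and---this is the key step you are missing---bound the increment $|V^n(t+u)-V^n(t)|$ by an explicit sum of three terms each of which is \emph{monotone in $u$} (see \eqref{deltaVn-bound}): a counting term $\tfrac1n\sum_i\bone(t<\tau^n_i+\eta_i\le t+u)$, an integral of $F^c(t-s)-F^c(t+u-s)$ against $d\bar A^n$, and $\bar A^n(t+u)-\bar A^n(t)$. Monotonicity lets one replace $\sup_{u\in[0,\delta]}$ by evaluation at $u=\delta$; the PRM representation is then used to bound the \emph{second moments} of these three quantities, each of which is shown to be $o(\delta)$ after letting $n\to\infty$ (via \eqref{deltaVn-bound-p2}--\eqref{deltaVn-bound-p4}). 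The proof concludes with the double limit $n\to\infty$ then $\delta\to0$ in \eqref{supVn-decomp-ineq}. Your sketch does not supply this monotone decomposition of the increment, and without it the uniform control does not go through.
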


\begin{proof}

Note that  by partitioning $[0,T]$ into intervals of length $\delta$, that is, $[t_i, t_{i+1})$, $i =0,\dots, [T/\delta]$ with $t_0=0$,  we have 
\begin{equation} \label{supVn-decomp}
\sup_{t \in [0,T]} |V^n(t)|  \le  \sup_{i=1, \dots, [T/\delta]} |V^n(t_i)| + \sup_{i=1, \dots, [T/\delta]} \sup_{u \in [0,\delta]}  |V^n(t_i+u) - V^n(t_i)|. 
\end{equation}

It is easy to check that
$$
\E[\chi^n_i(t)| \sF^n_t] = 0, \quad \forall i;
\quad
\E[\chi^n_i(t)\chi^n_j(t)| \sF^n_t] = 0, \quad \forall i\neq j. 
$$
Thus, we have 
\begin{align}
\E\big[V^n(t)^2 \big| \sF^n_t\big] & =  \frac{1}{n^2}\sum_{i=1}^{A^n(t)}E\big[\chi^n_i(t)^2| \sF^n_t\big] = \frac{1}{n^2}\sum_{i=1}^{A^n(t)}F(t-\tau_i^n)F^c(t-\tau_i^n) \non\\
& =  \frac{1}{n} \int_0^t F(t-s)F^c(t-s) d \bar{A}^n(s)  \non\\
& =  \frac{1}{n^{3/2}}  \int_0^t F(t-s)F^c(t-s) d \hat{M}_A^n(s) +   \frac{1}{n} \int_0^t F(t-s)F^c(t-s) d \bar{\Lambda}^n(s) \non\\
& \le  \frac{1}{n^{3/2}}  \int_0^t F(t-s)F^c(t-s) d \hat{M}_A^n(s)  + \frac{\lambda t}{n}, \non
\end{align}
where the inequality follows from \eqref{barSIn-bound} and  \eqref{barLambda-n-Lip}.
Thus
\begin{equation}\label{Vn-t-conv}
\E[ |V^n(t)|^2]  \le \frac{\lambda t}{n},
\end{equation}
and for any $\ep>0$,
\[  \P(|V^n(t)|>\ep)\le \frac{\lambda t}{n\ep^2}\to 0, \ \text{ as }n\to\infty\,.\]
We now consider $V^n(t+u) - V^n(t)$ for $t,u\ge 0$. 
By definition, we have
\begin{align}
|V^n(t+u) - V^n(t)| & = \left|  \frac{1}{n}\sum_{i=1}^{A^n(t+u)} \chi^n_i(t+u) -   \frac{1}{n}\sum_{i=1}^{A^n(t)} \chi^n_i(t) \right| \non\\
&=  \left| \frac{1}{n}\sum_{i=1}^{A^n(t)} (\chi^n_i(t+u) - \chi^n_i(t))  +   \frac{1}{n}\sum_{i=A^n(t)}^{{A^n(t+u)}} \chi^n_i(t+u) \right| \non\\
& \le  \frac{1}{n}\sum_{i=1}^{A^n(t)} \bone(t< \tau^n_i+\eta_i\le t+u)  +  \int_0^{t+u} (F^c(t-s) -F^c(t+u-s)) d \bar{A}^n(s) \non\\
&\qquad +   \frac{1}{n}\sum_{i=A^n(t)}^{{A^n(t+u)}} | \chi^n_i(t+u)| .  \non
\end{align}
Observing that the first and second terms on the right hand  are increasing in $u$, and that $|\chi^n_i(t)| \le 1$, we obtain
\begin{align} \label{deltaVn-bound}
\sup_{u\in [0,\delta]}|V^n(t+u) - V^n(t)| 
&\le  \frac{1}{n}\sum_{i=1}^{A^n(t)} \bone(t< \tau^n_i+\eta_i\le t+\delta)  \\
& \qquad +  \int_0^{t+\delta} (F^c(t-s) -F^c(t+\delta-s)) d \bar{A}^n(s)  + \big( \bar{A}^n(t+\delta) - \bar{A}^n(t) \big). \non
\end{align}
Thus, for any $\ep>0$, 
\begin{align} \label{deltaVn-bound-p1}
& \P \left( \sup_{u\in [0,\delta]}|V^n(t+u) - V^n(t)| > \ep\right) \non\\
&\le  \P\left( \frac{1}{n}\sum_{i=1}^{A^n(t)} \bone(t< \tau^n_i+\eta_i\le t+\delta)   > \ep/3 \right) \non\\
& \qquad +  \P\left( \int_0^{t+\delta} (F^c(t-s) -F^c(t+\delta-s)) d \bar{A}^n(s)  > \ep/3\right) +   \P\left( \bar{A}^n(t+\delta) - \bar{A}^n(t)> \ep/3\right) \non\\
& \le  \frac{9}{\ep^2}   \E \left[ \left( \frac{1}{n}\sum_{i=1}^{A^n(t)} \bone(t< \tau^n_i+\eta_i\le t+\delta)  \right)^2 \right] \non\\
& \qquad + \frac{9}{\ep^2} \E\left[ \left( \int_0^{t+\delta} (F^c(t-s) -F^c(t+\delta-s)) d \bar{A}^n(s)\right)^2 \right]  + \frac{9}{\ep^2}  \E\left[\left( \bar{A}^n(t+\delta) - \bar{A}^n(t)\right)^2 \right]. 
\end{align}

We need the following definition to treat the first term on the right hand side of  \eqref{deltaVn-bound-p1}.  

 \begin{definition} \label{def-PRM-SIR-1}
 Let $M(ds,dz,du)$ denote a Poisson random measure (PRM)   on $[0,T]\times \RR_+ \times  \RR_+$  which is the sum of the Dirac masses at the points $(\tau_i^n,  \eta_i, U^n_i)$ with  mean measure $\nu(ds, dz, du)=ds F(dz) du $, and $\overline{M}(ds,dz,du)$ denote the associated compensated PRM. 
 \end{definition}
We have 
\begin{align} \label{deltaVn-bound-p2}
&  \E \left[ \left( \frac{1}{n}\sum_{i=1}^{A^n(t)} \bone(t< \tau^n_i+\eta_i\le t+\delta)  \right)^2 \right] \non\\
&= \E \left[ \left(  \frac{1}{n}\int_0^t \int_{t-s}^{t+\delta-s} \int_0^\infty \bone(u \le \lambda n \bar{S}^n(s^-) \bar{I}^n(s^-)) M(ds,dz,du)   \right)^2 \right] \non\\
& \le  2\E \left[ \left(  \frac{1}{n}\int_0^t \int_{t-s}^{t+\delta-s} \int_0^\infty \bone(u \le \lambda n \bar{S}^n(s^-) \bar{I}^n(s^-)) \overline{M}(ds,dz,du)   \right)^2 \right] \non\\
& \qquad +  2\E\left[ \left(\int_0^{t} (F^c(t-s) -F^c(t+\delta-s)) d \bar{\Lambda}^n(s)\right)^2\right]  \non\\
&=    \frac{2}{n} \E\left[  \int_0^{t} (F^c(t-s) -F^c(t+\delta-s)) d \bar{\Lambda}^n(s) \right]  +    2\E\left[ \left(\int_0^{t} (F^c(t-s) -F^c(t+\delta-s)) d \bar{\Lambda}^n(s)\right)^2\right]  \non\\
& \le 
\frac{2}{n} \lambda \int_0^{t+\delta} (F^c(t-s) -F^c(t+\delta-s)) ds
 +  2\left(  \lambda \int_0^{t} (F^c(t-s) -F^c(t+\delta-s)) ds\right)^2\,. 
\end{align}
The last inequality follows from \eqref{barLambda-n-Lip}.
The first term on the right hand converges to zero as $n\to \infty$, and for the second term, we have
\begin{align} \label{deltaVn-bound-p3}
\frac{1}{\delta}\left(  \int_0^{t} (F^c(t-s) -F^c(t+\delta-s)) ds\right)^2
&=\frac{1}{\delta}\left(\int_t^{t+\delta}F(s)ds-\int_0^\delta F(s)ds\right)^2\non\\
&\le\delta\to0 \qasq \delta \to 0. 
\end{align}

For the second term on the right hand side of \eqref{deltaVn-bound-p1},  by \eqref{barAn-rep}, we have 
\begin{align}
& \E\left[ \left( \int_0^{t+\delta} (F^c(t-s) -F^c(t+\delta-s)) d \bar{A}^n(s)\right)^2 \right]  \non\\
& \le 2  \E\left[ \left( \frac{1}{\sqrt{n}} \int_0^{t+\delta} (F^c(t-s) -F^c(t+\delta-s)) d \hat{M}_A^n(s)\right)^2 \right]\non  \\
& \qquad +  2\E\left[ \left( \int_0^{t+\delta} (F^c(t-s) -F^c(t+\delta-s)) d \bar{\Lambda}^n(s)\right)^2 \right]. \non
\end{align}
By \eqref{hatMnA-conv}, the first term converges to zero as $n\to \infty$. By \eqref{barLambda-n-Lip}, the second term is bounded by
$$
2 \left( \lambda \int_0^{t+\delta} (F^c(t-s) -F^c(t+\delta-s)) ds \right)^2\,,
$$
to which \eqref{deltaVn-bound-p3} again applies.

By \eqref{barAn-rep} and \eqref{barLambda-n-Lip}, we have
\begin{align}
\bar{A}^n(t+\delta) - \bar{A}^n(t)\le \frac{1}{\sqrt{n}} (\hat{M}^n(t+\delta) - \hat{M}^n(t)) + \lambda \delta. \non
\end{align}
Thus, for the third term on the right hand side of \eqref{deltaVn-bound-p1}, we have
\begin{align} \label{deltaVn-bound-p4}
 \E\left[\left( \bar{A}^n(t+\delta) - \bar{A}^n(t)\right)^2 \right] &\le
2   \E\left[\left( \ \frac{1}{\sqrt{n}} (\hat{M}^n(t+\delta) - \hat{M}^n(t))\right)^2 \right] +  2 \lambda^2 \delta^2. 
\end{align}
Again, by \eqref{hatMnA-conv}, the first term converges to zero as $n\to \infty$.

By \eqref{supVn-decomp}, we have for $\delta>0$, 
\begin{align}\label{supVn-decomp-ineq}
 \P\left(\sup_{t \in [0,T]} |V^n(t)| \ge \ep\right) &  \le  \left[\frac{T}{\delta}\right] \sup_{t\in[0,T]}  \P\left( |V^n(t)| \ge \ep/2\right) \non\\
 &\qquad +  \left[\frac{T}{\delta}\right]\sup_{t\in[0,T]}  \P \left( \sup_{u\in [0,\delta]}|V^n(t+u) - V^n(t)| > \ep/2\right).
\end{align}
The first term converges to zero as $n\to \infty$ by \eqref{Vn-t-conv}. 
By \eqref{deltaVn-bound-p1}--\eqref{deltaVn-bound-p4} and the above arguments, we obtain
\begin{equation}
\lim_{\delta \to 0} \limsup_{n\to\infty}  \,  \left[\frac{T}{\delta}\right] \sup_{t\in[0,T]} \,   \P\left(\sup_{u\in [0,\delta]}|V^n(t+u) - V^n(t)|  \ge \ep \right) =0.  \non
\end{equation}
Therefore, we have shown that \eqref{supVn-conv} holds.  
\end{proof}

By the convergence of $\breve{I}_1^n$ in \eqref{breve-In1-conv} and Lemma \ref{lem-Vn-SIR-conv}, we obtain in probability 
\begin{equation} 
\bar{I}_1^n(t) \to \bar{I}_1(t) =  \int_0^t F^c(t-s) d\bar{A}(s)  \qinq D \qasq n \to\infty. \non
\end{equation}
Combining this  with \eqref{barIn0-conv}, we  have 
\begin{equation}
\bar{I}^n = \bar{I}_0^n+ \bar{I}_1^n  \to  \bar{I}:=\bar{I}_0 + \bar{I}_1 = \bar{I}(0) F_0^c(\cdot) +   \int_0^{\cdot} F^c(\cdot-s) d\bar{A}(s) \non
\end{equation}
in $ D$ in probability as $n \to\infty$.  Note that there are no common jumps in $\bar{I}_0^n$ and $\bar{I}_1^n$ and the limit of  $\bar{I}_0^n$ is in $D$ if $F_0$ is discontinuous while the limit of  $\bar{I}_1^n$ is in $C$; thus, the continuous mapping theorem can be applied for the addition.

We now show the joint convergence in probability 
\begin{equation}\label{barSn-breveIn-jointconv}
(\bar{S}^n, \bar{I}^n) \to (\bar{S}, \bar{I}) \qinq D^2 \qasq n\to\infty. 
\end{equation}
Recall that $\bar{S}^n = 1- \bar{I}^n(0) - \bar{A}^n$ as in \eqref{barSn-conv1}.
We first prove the joint convergence of $ (\bar{I}^n(0), \bar{I}^n_0) \to (\bar{I}(0), \bar{I}_0)$ in $\RR_+\times D$. This follows from the joint convergence of $ (\bar{I}^n(0), \breve{I}^n_0) \to (\bar{I}(0), \bar{I}_0)$ in $\RR_+\times D$ by independence and the asymptotic negligence of the difference $ \bar{I}^n_0 -  \breve{I}^n_0 \to 0$ as shown in \eqref{barIn0-rep} and \eqref{barIn0-breveIn0-conv}. 
We next prove the joint convergence of $(\bar{A}^n, \bar{I}^n_1) \to (\bar{A}, \bar{I}_1)$ in $D^2$. We obtain the joint convergence of  $(\bar{A}^n, \breve{I}^n_1) \to (\bar{A}, \bar{I}_1)$ in $D^2$ by applying the continuous mapping theorem  to the map $x\in D \to (x, x-\int_0^{\cdot}  x(s)dF^c(\cdot -s)) \in D^2$. Then the claim follows from Lemma \ref{lem-Vn-SIR-conv}. Since the two groups of processes $ (\bar{I}^n(0), \bar{I}^n_0)$ and
$(\bar{A}^n, \bar{I}^n_1)$ are independent,  we have the joint convergence $ (\bar{I}^n(0), \bar{A}^n, \bar{I}^n_0,  \bar{I}^n_1)$, and thus 
 conclude the joint convergence of $(\bar{S}^n, \bar{I}^n)$ in  \eqref{barSn-breveIn-jointconv} by applying the continuous mapping theorem again. 

Thus we obtain in probability
\begin{align} \label{eqn-intSnIn-conv2}
\int_0^\cdot \bar{S}^n(s) \bar{I}^n(s) ds \to \int_0^\cdot \bar{S}(s) \bar{I}(s) ds \qinq D \qasq n \to \infty. 
\end{align}
By \eqref{barAn-rep} and \eqref{hatMnA-conv}, this implies that in probability 
\begin{align}
\bar{A}^n \to \bar{A}=  \lambda \int_0^\cdot \bar{S}(s) \bar{I}(s)ds \qinq D \qasq n \to \infty. \non 
\end{align}
Therefore, the limits $\bar{S}$ and $\bar{I}$ satisfy the integral equations given in \eqref{SIR-barS} and \eqref{SIR-barI}.

We next prove uniqueness of the solution to the system of equations \eqref{SIR-barS} and \eqref{SIR-barI}. 
 The two equations \eqref{SIR-barS} and \eqref{SIR-barI} can be regarded as Volterra integral equations of the second kind for two functions. 
For uniqueness,  suppose there are two solutions $(\bar{S}_1, \bar{I}_1)$ and 
$(\bar{S}_2, \bar{I}_2)$. 
Then we have 
\begin{align}
\bar{S}_1(t) - \bar{S}_2(t) &= -\lambda \int_0^t \Big( (\bar{S}_1(s) -\bar{S}_2(s)) \bar{I}_1(s) +  \bar{S}_2(s) (\bar{I}_1(s) - \bar{I}_2(s)  ) \Big) ds , \non\\
\bar{I}_1(t) - \bar{I}_2(t) &= \lambda \int_0^t F^c(t-s)  \Big( (\bar{S}_1(s) -\bar{S}_2(s)) \bar{I}_1(s) +  \bar{S}_2(s) (\bar{I}_1(s) - \bar{I}_2(s)  ) \Big)  ds.\non
\end{align}
Hence,
\begin{align}
& |\bar{S}_1(t) - \bar{S}_2(t) | + |\bar{I}_1(t) - \bar{I}_2(t)|  \le 2 \lambda  \int_0^t \Big( |\bar{S}_1(s) -\bar{S}_2(s)|  +  |\bar{I}_1(s) - \bar{I}_2(s)  | \Big) ds, \non 
\end{align}
where we use the simple bounds $\bar{S}_i(s)\le 1$ and $\bar{I}_i(s)\le 1$. 
The uniqueness follows from applying Gronwall's inequality.

Since the system of integral equations \eqref{SIR-barS} and \eqref{SIR-barI} has a unique deterministic solution (existence is easily established by a standard Picard iteration argument, identical to the classical one for Lipschitz ODEs), the whole sequence converges, and we have convergence in probability.

\section{Proof of the FCLT for the SIR model} \label{sec-SIR-FCLT-proof}

In this section we prove Theorem~\ref{thm-FCLT-SIR}. 
Recall the definitions of the diffusion-scaled processes $(\hat{S}^n, \hat{I}^n, \hat{R}^n)$ in \eqref{SIR-diff-def}, and $\hat{M}_A^n$ defined in \eqref{hatMn-def}. 
We also define
$$
\hat{A}^n(t) :=\sqrt{n} \left(\bar{A}^n(t)  -\bar{A}(t) \right) =  \sqrt{n} \left(\bar{A}^n(t) -  \lambda \int_0^t \bar{S}(s) \bar{I}(s) ds\right). 
$$
Note that under Assumption~\ref{AS-SIR-2}, we have $\bar{I}^n(0) \RA \bar{I}(0)$ in $\RR$ as $n\to\infty$, and thus the convergence of the fluid-scaled processes holds in Theorem~\ref{thm-FLLN-SIR}. This is taken as given in the proceeding proof of the FCLT.

 By the definitions of the diffusion-scaled processes in \eqref{SIR-diff-def}, we have
  \begin{align}
 \hat{A}^n(t) &= \hat{M}_A^n(t) +   \lambda  \int_0^t ( \hat{S}^n(s) \bar{I}^n(s) + \bar{S}(s) \hat{I}^n(s) ) ds, \label{hatAn-rep-SIR}\\
  \hat{S}^n(t) &= -\hat{I}^n(0) - \hat{A}^n(t)  = -\hat{I}^n(0)-\hat{M}_A^n(t) -   \lambda  \int_0^t ( \hat{S}^n(s) \bar{I}^n(s) + \bar{S}(s) \hat{I}^n(s) ) ds, \label{hatSn-rep-SIR}
 \end{align}
\begin{align}
\hat{I}^n(t)  &=  \hat{I}^n(0) F_0^c(t)  + \hat{I}_0^n(t)  + \hat{I}_1^n(t)   +  \lambda  \int_0^t F^c(t-s) \left( \hat{S}^n(s) \bar{I}^n(s) + \bar{S}(s) \hat{I}^n(s) \right) ds, \label{hatIn-rep-SIR}
\end{align}
and
\begin{align}
\hat{R}^n(t) &= \hat{I}^n(0) F_0(t)  + \hat{R}_0^n(t)  + \hat{R}_1^n(t)   +  \lambda  \int_0^t F(t-s) ( \hat{S}^n(s) \bar{I}^n(s) + \bar{S}(s) \hat{I}^n(s) ) ds, \label{hatRn-rep-SIR}
\end{align}
where 
\begin{align}
\hat{I}_0^n(t) &:= \frac{1}{\sqrt{n}} \sum_{j=1}^{n\bar{I}^n(0)} \big( \bone(\eta^0_j > t) -  F_0^c(t)\big), \label{hatIn0-rep} \\
\hat{I}_1^n(t) &:= \frac{1}{\sqrt{n}}   \sum_{i=1}^{n\bar{A}^n(t)} \bone(\tau^n_i + \eta_i >t)  - \lambda \sqrt{n} \int_0^t F^c(t-s)  \bar{S}^n(s) \bar{I}^n(s) ds, \label{hatIn1-rep}
\end{align}
 \begin{align}
\hat{R}_0^n(t) &:= \frac{1}{\sqrt{n}} \sum_{j=1}^{n\bar{I}^n(0)} \big( \bone(\eta^0_j \le t) -  F_0(t)\big),  \label{hatRn0-rep}\\
\hat{R}_1^n(t) &:=\frac{1}{\sqrt{n}}   \sum_{i=1}^{n\bar{A}^n(t)} \bone(\tau^n_i + \eta_i \le t) - \lambda \sqrt{n} \int_0^t F(t-s)  \bar{S}^n(s) \bar{I}^n(s) ds.  \label{hatRn1-rep}
\end{align}

 We first establish the following joint convergence of the initial quantities. 
 \begin{lemma}\label{lem-hatInitials-conv-SIR}
 Under Assumption \ref{AS-SIR-2}, we have
 \begin{equation}
 (\hat{I}^n(0) F_0^c(\cdot), \hat{I}^n(0) F_0(\cdot), \hat{I}^n_0, \hat{R}^n_0)\RA  \left(\hat{I}(0) F_0^c(\cdot), \hat{I}(0) F_0(\cdot), \hat{I}_0, \hat{R}_0\right)
 \end{equation}
 in $D^4$ as $n\to\infty$, 
 where the limit processes $\hat{I}_0$ and $\hat{R}_0$ are as defined in Theorem \ref{thm-FCLT-SIR}.  
 \end{lemma}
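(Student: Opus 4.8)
The plan rests on two structural facts. First, because $\bone(\eta^0_j>t)-F^c_0(t)=-\big(\bone(\eta^0_j\le t)-F_0(t)\big)$, the defining sums \eqref{hatIn0-rep}, \eqref{hatRn0-rep} give $\hat I^n_0=-\hat R^n_0$ identically; it therefore suffices to prove the joint convergence of $(\hat I^n(0),\hat R^n_0)$ and to read off the statement with $\hat I_0:=-\hat R_0$. Second, $\hat I^n(0)$ is $\sigma\{I^n(0)\}$-measurable, hence independent of the i.i.d.\ sequence $\{\eta^0_j\}$, and the only awkward feature of $\hat R^n_0$ is that its summation runs up to the \emph{random} index $n\bar I^n(0)=I^n(0)$. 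So I will first replace that index by the deterministic $\lf n\bar I(0)\rf$, obtaining a process independent of $\hat I^n(0)$, establish the FCLT for it, and then show that the replacement costs nothing in the uniform norm.

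Set $\breve R^n_0(t):=\frac1{\sqrt n}\sum_{j=1}^{\lf n\bar I(0)\rf}\big(\bone(\eta^0_j\le t)-F_0(t)\big)=\sqrt{\lf n\bar I(0)\rf/n}\;\alpha_{\lf n\bar I(0)\rf}(t)$, where $\alpha_m$ denotes the empirical process of $\eta^0_1,\dots,\eta^0_m$. By the classical empirical-process FCLT (Theorem~14.3 in \cite{billingsley1999convergence} together with the quantile transform), $\alpha_m\RA\alpha$ in $D$, where $\alpha$ is the mean-zero Gaussian process with $\Cov(\alpha(t),\alpha(t'))=F_0(t\wedge t')-F_0(t)F_0(t')$, continuous when $F_0$ is; hence $\breve R^n_0\RA\hat R_0:=\sqrt{\bar I(0)}\,\alpha$, whose covariance is exactly $\bar I(0)\big(F_0(t\wedge t')-F_0(t)F_0(t')\big)$. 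Putting $\hat I_0:=-\hat R_0$ and using $F_0(t)+F_0(t')-F_0(t\vee t')=F_0(t\wedge t')$, one checks that $\Cov(\hat I_0(t),\hat I_0(t'))$ and $\Cov(\hat I_0(t),\hat R_0(t'))$ coincide with the formulas in Theorem~\ref{thm-FCLT-SIR}. Since $\breve R^n_0$ is $\sigma\{\eta^0_j\}$-measurable while $\hat I^n(0)$ is $\sigma\{I^n(0)\}$-measurable, and these $\sigma$-fields are independent, $\hat I^n(0)$ and $\breve R^n_0$ are independent for every $n$; combined with $\hat I^n(0)\RA\hat I(0)$ (Assumption~\ref{AS-SIR-2}) this yields $(\hat I^n(0),\breve R^n_0)\RA(\hat I(0),\hat R_0)$ with independent limit coordinates.

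The crux is to show $\|\hat R^n_0-\breve R^n_0\|_T\RA0$ (equivalently $\|\hat I^n_0-\breve I^n_0\|_T\RA0$). Conditionally on $\sF^n_0=\sigma\{I^n(0)\}$, this difference equals $\pm\frac1{\sqrt n}\sum_{j\in J_n}\big(\bone(\eta^0_j\le t)-F_0(t)\big)$, where $J_n$ is the block of indices lying between $\lf n\bar I(0)\rf$ and $n\bar I^n(0)$, so $|J_n|\le\sqrt n\,|\hat I^n(0)|+1$, and in law this block sum is $\sqrt{|J_n|}$ times the empirical process of $|J_n|$ i.i.d.\ samples. Using the bound $\sup_m\E[\|\alpha_m\|_T^2]<\infty$ (from the Dvoretzky--Kiefer--Wolfowitz inequality, or from tightness in Donsker's theorem), we obtain on the event $\{|\hat I^n(0)|\le M\}$ that $\E\big[\|\hat R^n_0-\breve R^n_0\|_T^2\,\big|\,\sF^n_0\big]\le C\,(\sqrt n\,M+1)/n$; taking expectations, letting $n\to\infty$ and then $M\to\infty$ while bounding $\P(|\hat I^n(0)|>M)\le M^{-2}\sup_n\E[\hat I^n(0)^2]$ by Chebyshev, the claim follows. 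This is the main obstacle: $n\bar I^n(0)$ differs from $n\bar I(0)$ by $O(\sqrt n)$, so $O(\sqrt n)$ extra $O(1)$ summands are in play, and only their centering together with a modulus control uniform in the number of summands and in $t$ makes the $1/\sqrt n$-scaled difference vanish (at rate $n^{-1/4}$).

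Combining the two preceding steps gives $(\hat I^n(0),\hat I^n_0,\hat R^n_0)\RA(\hat I(0),\hat I_0,\hat R_0)$ with $(\hat I_0,\hat R_0)$ independent of $\hat I(0)$. Finally, $F^c_0$ and $F_0$ are fixed elements of $D$, so $(c,x,y)\mapsto(cF^c_0,cF_0,x,y)$ is continuous from $\RR\times D^2$ to $D^4$, and the continuous mapping theorem upgrades the above to $(\hat I^n(0)F^c_0(\cdot),\hat I^n(0)F_0(\cdot),\hat I^n_0,\hat R^n_0)\RA(\hat I(0)F^c_0(\cdot),\hat I(0)F_0(\cdot),\hat I_0,\hat R_0)$ in $D^4$. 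Continuity of $\hat I_0$ and $\hat R_0$ when $F_0$ is continuous is inherited from that of $\alpha$.
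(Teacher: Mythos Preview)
Your proof is correct and follows the same overall strategy as the paper: replace the random upper index $n\bar I^n(0)$ by the deterministic $\lfloor n\bar I(0)\rfloor$, invoke the empirical-process FCLT and independence from $\hat I^n(0)$ for the resulting deterministic-index process, and show the remainder is asymptotically negligible. Your observation that $\hat I^n_0=-\hat R^n_0$ identically is a clean simplification that the paper does not make explicit.

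The one tactical difference is in how the remainder $\hat R^n_0-\breve R^n_0$ is controlled. The paper argues pointwise $L^2$-convergence and then separately establishes tightness by writing the difference as $|\hat I^n(0)|F_0(\cdot)$ minus a monotone process and checking an increment condition. You instead recognize the remainder as $\sqrt{|J_n|/n}$ times an empirical process of $|J_n|$ samples and bound its sup-norm in one stroke via the uniform moment bound $\sup_m\E[\|\alpha_m\|_T^2]<\infty$ from Dvoretzky--Kiefer--Wolfowitz, combined with a truncation on $|\hat I^n(0)|$. Your route is more direct; the paper's route avoids DKW here (though it uses DKW later, in Lemma~\ref{lem-hatSn-t-2-bound}) at the cost of a two-step pointwise-plus-tightness argument. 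Both are valid and yield the same conclusion.
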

 
 \begin{proof}
 We define 
 \begin{align*}
\widetilde{I}_0^n(t) &:= \frac{1}{\sqrt{n}} \sum_{j=1}^{n\bar{I}(0)} \big( \bone(\eta^0_j > t) -  F_0^c(t)\big), \quad
\widetilde{R}_0^n(t) := \frac{1}{\sqrt{n}} \sum_{j=1}^{n\bar{I}(0)} \big( \bone(\eta^0_j \le t) -  F_0(t)\big). 
\end{align*}
By the FCLT for empirical processes, see, e.g., \cite[Theorem 14.3]{billingsley1999convergence},  we have the joint convergence 
 \begin{equation*}
 (\hat{I}^n(0) F_0^c(\cdot), \hat{I}^n(0) F_0(\cdot), \widetilde{I}^n_0, \widetilde{R}^n_0)\RA  \left(\hat{I}(0) F_0^c(\cdot), \hat{I}(0) F_0(\cdot), \hat{I}_0, \hat{R}_0\right) 
 \end{equation*}
 in $D^4$ as $n\to\infty$. 
   The claim then follows by showing that $\widetilde{I}^n_0 - \hat{I}^n_0\RA 0$ in $D$ as $n\to\infty$,  and $\widetilde{R}^n_0 - \hat{R}^n_0\RA 0$ in $D$ as $n\to\infty$.  We focus on $\widetilde{I}^n_0 - \hat{I}^n_0\RA 0$.  We have    for each $t\ge 0$, 
   $\E[\widetilde{I}^n_0(t) - \hat{I}^n_0(t)]=0$ and 
$$
\E[|\widetilde{I}^n_0(t) - \hat{I}^n_0(t)|^2] = F_0^c(t) F_0(t) E[|\bar{I}^n(0) - \bar{I}(0)|] \to 0 \qasq n \to \infty,
$$
where the convergence follows from Assumption~\ref{AS-SIR-2}. 
It then suffices to show that $\{\widetilde{I}^n_0 - \hat{I}^n_0: n \ge 1\}$ is tight.
 We have 
 \begin{align*}
\text{sign}(\bar{I}(0)-\bar{I}^n(0))\left( \widetilde{I}^n_0(t) - \hat{I}^n_0(t)\right) & = \frac{1}{\sqrt{n}} \sum_{j=n(\bar{I}^n(0) \wedge \bar{I}(0)}^{n(\bar{I}^n(0) \vee \bar{I}(0)} \big( \bone(\eta^0_j > t) -  F_0^c(t)\big)  \non\\
 & = |\hat{I}^n(0)| F_0(t) 
- \frac{1}{\sqrt{n}} \sum_{j=n(\bar{I}^n(0) \wedge \bar{I}(0)}^{n(\bar{I}^n(0) \vee \bar{I}(0)} \bone(\eta^0_j \le t). 
 \end{align*}
 By Assumption \ref{AS-SIR-2}, the first term on the right hand side is tight. 
 Denoting the second term by $\Theta_{0}^n(t)$, since it is increasing in $t$, 
 by the Corollary on page 83 in \cite{billingsley1999convergence}, see also the use of \eqref{supVn-decomp} in the proof of Lemma \ref{lem-Vn-SIR-conv} above, its tightness will follow from the fact that  
for any $\ep>0$, 
\begin{equation*}\label{Theta-n-tight}
\limsup_{n\to\infty} \frac{1}{\delta}\P\big( \big|\Theta_{0}^n(t+\delta) - \Theta_{0}^n(t) \big| \ge \ep \big) \to 0\qasq \delta \to 0.
\end{equation*}
This is immediate since by Assumption~\ref{AS-SIR-2}, 
\begin{align*}
\E\left[  \big|\Theta_{0}^n(t+\delta) - \Theta_{0}^n(t) \big|^2\right]  =E[|\bar{I}^n(0) - \bar{I}(0)|]  |F_0(t+\delta) -F_0(t) | \to 0 \qasq n \to \infty.
\end{align*}
This completes the proof. 
 \end{proof}

 Recall the PRM $M(ds,dz,du)$ and the compensated PRM   $\overline{M}(ds,dz,du)$ in Definition \ref{def-PRM-SIR-1}. 
 \begin{definition} \label{def-PRM-SIR-2}
 Let  $M_1(ds,dz,du)$ be the PRM on $[0,T]\times \RR_+ \times  \RR_+$  with mean measure $\tilde\nu(ds,dz,du) = ds F_s(dz) du $, where $F_s((a,b]) = F((a+s, b+s])$.  Denote the associated compensated PRM by $\widetilde{M}(ds,dz,du)$. 
 \end{definition}
 
 We can rewrite the processes $\hat{I}^n_1$ and $\hat{R}^n_1$ as
 \begin{align*}
 \hat{I}^n_1(t) & = \frac{1}{\sqrt{n}} \int_0^t \int_{t-s}^{\infty} \int_0^\infty \varphi_n(s,u)  \overline{M}(ds,dz,du) = \frac{1}{\sqrt{n}} \int_0^t \int_{t}^{\infty} \int_0^\infty \varphi_n(s,u)  \widetilde{M}(ds,dz,du) ,\\
  \hat{R}^n_1(t) & = \frac{1}{\sqrt{n}} \int_0^t \int_0^{t-s} \int_0^\infty \varphi_n(s,u)  \overline{M}(ds,dz,du)  = \frac{1}{\sqrt{n}} \int_0^t \int_{0}^{t} \int_0^\infty \varphi_n(s,u)  \widetilde{M}(ds,dz,du), 
 \end{align*}
 where
 $$
 \varphi_n(s,u) = \bone\left(u \le n \lambda \bar{S}^n(s^-)\bar{I}^n(s^-)\right).  
 $$
 We also observe that the process $\hat{M}^n_A$ can also be represented by the same PRMs:
 $$
 \hat{M}^n_A(t) = \frac{1}{\sqrt{n}} \int_0^t \int_0^{\infty} \int_0^\infty \varphi_n(s,u)  \overline{M}(ds,dz,du)  = \frac{1}{\sqrt{n}} \int_0^t \int_{0}^{\infty} \int_0^\infty \varphi_n(s,u)  \widetilde{M}(ds,dz,du), 
 $$
 and that 
 $$
  \hat{M}^n_A(t) =   \hat{I}^n_1(t)  +  \hat{R}^n_1(t), \quad t \ge 0. 
 $$ 
 We define the auxiliary processes 
 $\widetilde{I}^n_1$ and $\widetilde{R}^n_1$ by
 \begin{align*}
 \widetilde{I}^n_1(t) & = \frac{1}{\sqrt{n}} \int_0^t \int_{t}^{\infty} \int_0^\infty \widetilde\varphi_n(s,u)  \widetilde{M}(ds,dz,du) ,\non\\
  \widetilde{R}^n_1(t) & = \frac{1}{\sqrt{n}} \int_0^t \int_0^{t} \int_0^\infty \widetilde\varphi_n(s,u) \widetilde{M}(ds,dz,du), \non \\
  \widetilde{M}^n_A(t) &= \frac{1}{\sqrt{n}} \int_0^t \int_0^{\infty}  \int_0^\infty \widetilde\varphi_n(s,u) \widetilde{M}(ds,dz,du), 
 \end{align*}
 where
 $$
 \widetilde\varphi_n(s,u) = \bone\left(u \le n \lambda \bar{S}(s^-)\bar{I}(s^-)\right).  
 $$
Note that in the definitions of $ \widetilde{I}^n_1(t)$ and $ \widetilde{R}^n_1(t)$, we have replaced $\bar{S}^n(s)$ and $\bar{I}^n(s)$ in the integrands $\varphi_n(s,u)$ by the deterministic fluid functions $\bar{S}(s)$ and $\bar{I}(s)$. 
Also, it is clear that 
$$
  \widetilde{M}^n_A(t)  =  \widetilde{I}^n_1(t) +   \widetilde{R}^n_1(t), \quad t \ge 0. 
$$

We first prove the following result.
\begin{lemma} \label{lem-hatSn-t-2-bound}
$$
\sup_n\E\left[\sup_{t \in [0,T]}|\hat{S}^n(t)|^2\right] <\infty, \quad \sup_n\E\left[\sup_{t \in [0,T]}|\hat{I}^n(t)|^2\right] <\infty, \quad \sup_n\E\left[\sup_{t \in [0,T]}|\hat{R}^n(t)|^2\right] <\infty. 
$$
\end{lemma}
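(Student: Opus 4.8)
The plan is to derive uniform‑in‑$n$ moment bounds for the ``input'' terms in the representations \eqref{hatSn-rep-SIR}--\eqref{hatRn-rep-SIR} and then to close via Gronwall's inequality. Since $\hat R^n=-\hat S^n-\hat I^n$ by the balance relation, it suffices to bound $\hat S^n$ and $\hat I^n$, so only $\hat I^n(0)$, $\hat M_A^n$, $\hat I_0^n$ and $\hat I_1^n$ enter. Using $\bar I^n(s)\le1$, $\bar S(s)\le1$ (from \eqref{barSIn-bound}) and $F^c,F_0^c,F_0\le1$ in \eqref{hatSn-rep-SIR} and \eqref{hatIn-rep-SIR}, one gets pathwise, with $g^n(t):=\sup_{r\le t}\bigl(|\hat S^n(r)|+|\hat I^n(r)|\bigr)$,
\[
g^n(t)\le D^n+2\lambda\int_0^t g^n(s)\,ds,\qquad D^n:=2|\hat I^n(0)|+\|\hat M_A^n\|_T+\|\hat I_0^n\|_T+\|\hat I_1^n\|_T,
\]
whence $g^n(T)\le D^n e^{2\lambda T}$ by Gronwall, and it remains to show $\sup_n\E[(D^n)^2]<\infty$, i.e.\ a uniform $L^2$ bound on each of the four terms.

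Three of these are standard. The term $\hat I^n(0)$ is controlled by Assumption \ref{AS-SIR-2}. The process $\hat M_A^n$ is the square‑integrable $\{\sF_t^n\}$‑martingale \eqref{hatMn-def} with $\langle\hat M_A^n\rangle(t)=\bar\Lambda^n(t)\le\lambda t$ by \eqref{hatMn-QV} and \eqref{barLambda-n-Lip}, so Doob's $L^2$ maximal inequality gives $\E[\|\hat M_A^n\|_T^2]\le4\E[\bar\Lambda^n(T)]\le4\lambda T$. For $\hat I_0^n$ I would compare with the empirical process $\widetilde I_0^n$ appearing in the proof of Lemma \ref{lem-hatInitials-conv-SIR}: a Dvoretzky--Kiefer--Wolfowitz (equivalently Kolmogorov--Smirnov) maximal inequality makes $\E[\|\widetilde I_0^n\|_T^2]$ uniformly bounded in $n$, while the identity derived there gives $\|\hat I_0^n-\widetilde I_0^n\|_T\le2|\hat I^n(0)|+n^{-1/2}$, whose square has uniformly bounded mean by Assumption \ref{AS-SIR-2}.

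The decisive point is $\sup_n\E[\|\hat I_1^n\|_T^2]<\infty$, which I regard as the main obstacle. For a \emph{fixed} $t$ it is immediate from the PRM representation of Definition \ref{def-PRM-SIR-1}: the process $r\mapsto\frac1{\sqrt n}\int_0^r\int_{t-s}^\infty\int_0^\infty\varphi_n(s,u)\,\overline M(ds,dz,du)$ is an $\{\sF_r^n\}$‑martingale with predictable quadratic variation $\int_0^r F^c(t-s)\lambda\bar S^n(s)\bar I^n(s)\,ds\le\lambda r$, so $\E[\hat I_1^n(t)^2]\le\lambda t$. The difficulty is that $\hat I_1^n$ is \emph{not} a martingale in $t$, the inner integration region $[t-s,\infty)$ depending on the outer time $t$ (this is precisely why the shifted PRM $\widetilde M$ of Definition \ref{def-PRM-SIR-2} is introduced). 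To upgrade the pointwise bound to a bound on the supremum I would fix a coarse partition $0=t_0<\dots<t_m=T$, write
\[
\|\hat I_1^n\|_T\le\max_{1\le k\le m}|\hat I_1^n(t_k)|+\max_{1\le k\le m}\sup_{0\le u\le t_k-t_{k-1}}\bigl|\hat I_1^n(t_{k-1}+u)-\hat I_1^n(t_{k-1})\bigr|,
\]
bound the first maximum by summing the pointwise estimate over the finitely many $k$, and bound the block oscillations by an argument parallel to the proof of Lemma \ref{lem-Vn-SIR-conv}, carried out with the \emph{compensated} integrals against $\overline M$ (or $\widetilde M$) so that the would‑be uncompensated mean terms of order $\sqrt n\,\delta$ cancel; the diffusion‑scale analogues of the estimates \eqref{deltaVn-bound-p2}--\eqref{deltaVn-bound-p4} then make $\E\bigl[\sup_{u\le\delta}|\hat I_1^n(t_{k-1}+u)-\hat I_1^n(t_{k-1})|^2\bigr]$ uniformly bounded in $n$ for a fixed block length $\delta$. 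Summing over $k$ gives the desired uniform bound, and substituting the four bounds into the Gronwall estimate completes the proof; the hard part, again, is this last transfer from an elementary pointwise second‑moment bound on $\hat I_1^n$ to a uniform‑in‑$n$ bound on its supremum.
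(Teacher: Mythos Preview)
Your overall architecture and your handling of $\hat I^n(0)$, $\hat M_A^n$, and $\hat I_0^n$ match the paper (Assumption~\ref{AS-SIR-2}, Doob on $\hat M_A^n$, and DKW for $\tilde I_0^n$ together with $\|\hat I_0^n-\tilde I_0^n\|_T\lesssim|\hat I^n(0)|$). The divergence is in the $\hat I_1^n$ term, and there your sketch has a real gap.

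The paper does \emph{not} try to estimate the oscillations of $\hat I_1^n$ directly. Instead it uses a two–step structure: first a pointwise Gronwall gives $\sup_n\sup_{t\le T}\E[|\hat S^n(t)|^2+|\hat I^n(t)|^2]<\infty$ (your fixed--$t$ bound $\E[\hat I_1^n(t)^2]\le\lambda t$ is the needed input); only then is the sup moved inside. For $\sup_t$ on $\hat I_1^n$ the paper introduces the auxiliary $\widetilde I_1^n$ obtained by replacing $\varphi_n(s,u)=\bone(u\le n\lambda\bar S^n\bar I^n)$ by the deterministic $\widetilde\varphi_n(s,u)=\bone(u\le n\lambda\bar S\,\bar I)$. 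Because the integrand is now deterministic, $\widetilde I_1^n=\widetilde M_A^n-\widetilde R_1^n$ is a difference of two genuine martingales (with respect to the filtrations $\sG^A$ and $\sG^R$), and Doob controls each. The remainder $\hat I_1^n-\widetilde I_1^n$ is then bounded crudely by the uncompensated PRM plus its compensator over the enlarged region $[0,t]\times[0,\infty)\times\RR_+$; both pieces are monotone in $t$, and their second moments at $T$ reduce to $\E[(\int_0^T(|\hat S^n|+|\hat I^n|)ds)^2]$, which is finite by Step~1.

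Your proposed ``parallel to Lemma~\ref{lem-Vn-SIR-conv}'' does not transfer. That proof controls $\sup_{u\le\delta}$ by exploiting the monotonicity in $u$ of each \emph{uncompensated} piece in \eqref{deltaVn-bound}, so that the sup equals the endpoint value. Once you pass to compensated integrals (which, as you correctly say, is forced to kill the $\sqrt n\,\delta$ mean terms), that monotonicity is lost; you would then need a martingale in $u$, but $\hat R_1^n$ (hence $\hat I_1^n$) is \emph{not} a martingale in $t$ precisely because $\varphi_n$ depends on $\bar I^n(s)$, which is not $\sG^R$--adapted. The deterministic replacement $\widetilde\varphi_n$ is exactly what restores the martingale property, and the preliminary pointwise step is exactly what absorbs the cost of that replacement. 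Without these two ideas your oscillation bound is not established.
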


\begin{proof}
The proof will be split in two steps. In step 1, we shall prove the estimates with $\sup_{t \in [0,T]}$ outside the expectations, and in step 2 we shall prove the result.

{\sc Step 1}
We have 
$$
\sup_{t \in [0,T]} \E[\hat{M}_A^n(t)^2] \le \lambda T . 
$$
It is clear that there exists a constant $C$ such that for all $n\ge1$,
$$
\sup_{t \in [0,T]} \E[(\hat{I}^n(0) F_0^c(t))^2]  \le \E[\hat{I}^n(0)^2] \le C,
$$
$$
\sup_{t \in [0,T]} \E[(\hat{I}_0^n(t))^2]  = \sup_{t \in [0,T]} \E[\bar{I}^n(0)] F_0(t) F_0^c(t) \le \E[\bar{I}^n(0)]  \le C,
$$
and 
$$
\sup_{t \in [0,T]} \E[(\hat{I}_1^n(t))^2] =  \sup_{t \in [0,T]}  \lambda \int_0^t F^c(t-s) \bar{S}^n(s) \bar{I}^n(s) ds  \le \lambda T. 
$$
Then by taking the square of the representations of $\hat{S}^n(t)$ in \eqref{hatSn-rep-SIR} and  $\hat{I}^n(t)$ in \eqref{hatIn-rep-SIR}, then using Cauchy-Schwartz inequality and the simple bounds $\bar{I}^n(t) \le 1$ and $\bar{S}(t) \le 1$, we can apply Gronwall's inequality to conclude the claim. 

{\sc Step 2} It follows from Step 1 and Doob's inequality that $\E\big[\sup_{t\in[0,T]}|\hat{M}_A^n(t)|^2\big]<\infty$,
from which the result concerning $\hat{S}^n$ follows readily. Concerning $\hat{I}^n$, we need to establish both
 \begin{align*}
  \sup_n\E\bigg[\sup_{0\le t\le T}\big(\hat{I}^n_0(t)\big)^2\bigg]<\infty,\qandq 
   \sup_n\E\bigg[\sup_{0\le t\le T}\big(\hat{I}^n_1(t)\big)^2\bigg]<\infty\,. 
  \end{align*}
Let us first consider the second term. We use the decomposition 
$\hat{I}^n_1=\widetilde{I}_1^n+\big[\hat{I}^n_1-\widetilde{I}_1^n\big]$. Concerning $\widetilde{I}_1^n$, we exploit the fact that $\widetilde{I}_1^n=\widetilde{M}_A^n-\widetilde{R}_1^n$, which is a difference of two martingales, to each of which we can apply Doob's inequality, which yields that 
$\sup_n\E\Big[\sup_{0\le t\le T}\big(\widetilde{I}^n_1(t)\big)^2\Big]<\infty$. The difference 
$\hat{I}^n_1-\widetilde{I}_1^n$ is easy to treat. Indeed, 
 \begin{align*}
 \hat{I}^n_1(t)-\widetilde{I}_1^n(t)=\int_0^t\int_t^\infty\int_0^\infty\rho_n(s,u)\widetilde{M}(ds,du),
 \end{align*}
 where 
 \begin{align*}
  \rho_n(s,u)&=\frac{\varphi_n(s,u)-\tilde{\varphi}_n(s,u)}{\sqrt{n}}\\
  &=n^{-1/2}{\bf1}(n\lambda\bar{S}(s^-)\bar{I}(s^-)\wedge\bar{S}^n(s^-)\bar{I}^n(s^-)<u\le n\lambda\bar{S}(s^-)\bar{I}(s^-)\vee\bar{S}^n(s^-)\bar{I}^n(s^-)). 
  \end{align*}
  As a consequence,
  \begin{align*} 
  \int_0^\infty \rho_n(s,u)du&=\sqrt{n}\lambda|\bar{S}(s^-)\bar{I}(s^-)-\bar{S}^n(s^-)\bar{I}^n(s^-)|\\
  &\le\lambda(|\hat{S}^n(s^-)|+|\hat{I}^n(s^-)|)\,.
  \end{align*}
Now we shall upper bound the absolute value of the integral with respect to the compensated PRM by the sum of two positive terms, the integral w.r.t. the PRM, and the integral w.r.t. the mean measure. Next in each one we upper bound  by replacing the second integral from $t$ to $\infty$ by the same integral from $0$ to $\infty$.
Finally, we shall upper bound the second moment of the sup on $t$ of 
$\hat{I}^n_1(t)-\widetilde{I}_1^n(t)$ by expectations of integrals involving the square of $|\hat{S}^n(s)|+|\hat{I}^n(s)|$, so thanks to step 1, we are done.

It remains to show that $\sup_n\E\Big[\sup_{0\le t\le T}\big(\hat{I}^n_0(t)\big)^2\Big]<\infty$.
Again we have the decomposition
 \[ \hat{I}^n_0(t)=\tilde{I}^n_0(t)+\hat{I}^n_0(t)-\tilde{I}^n_0(t)\,.\]
 The result concerning the second term follows easily from Assumption \ref{AS-SIR-2}, since 
 $\sup_{t\ge0} |\hat{I}^n_1(t)-\tilde{I}^n_1(t)|\le |\hat{I}^n(0)|$.
   It remains to consider $\tilde{I}^n_0$. Assuming for simplicity that $n\bar{I}(0)$ is an integer, we have
 \begin{align*}
  \frac{1}{\sqrt{\bar{I}(0)}}\tilde{I}^n_0(t)&=\frac{1}{\sqrt{n \bar{I}(0)}}\sum_{j=1}^{n\bar{I}(0)}\left(\bone(\eta^0_j>t)-F_0^c(t)\right)\\
  &=-\frac{1}{\sqrt{n \bar{I}(0)}}\sum_{j=1}^{n\bar{I}(0)}\left(\bone(\eta^0_j\le t)-F_0(t)\right)\\
  &=-\mathbb{F}_n(t).
  \end{align*}
  But from the well--known Dvoretsky--Kiefer--Wolfowitz inequality (with Massart's optimal constant, see \cite{Massart90}), we have
  \begin{align*}
  \P\bigg(\sup_{t\ge0}|\mathbb{F}_n(t)|>x\bigg)&\le 2\exp(-2x^2),\\
  \E\left[\sup_{t\ge0}|\mathbb{F}_n(t)|^2\right]&\le2\int_0^\infty \exp(-2x)dx=1,
  \end{align*}
  so that $\sup_n\E\Big[\sup_{0\le t\le T}\big(\tilde{I}^n_0(t)\big)^2\Big]\le \bar{I}(0) $. 
  By the representation of $\hat{I}^n(t)$ in \eqref{hatIn-rep-SIR}, we can apply Gronwall's inequality to conclude the claim. 
  The same kind of argument yields the estimate for $\hat{R}^n$.
\end{proof}

We next show that the differences of the processes $\hat{M}^n_A,  \hat{R}_1^n, \hat{I}_1^n$ with their corresponding $ \widetilde{M}^n_A,  \widetilde{R}_1^n,  \widetilde{I}_1^n$ are asymptotically negligible, stated in the next Lemma.
\begin{lemma} \label{lem-hat-tilde-diff-SIR}
Under Assumption \ref{AS-SIR-2}, 
$$
(\hat{M}^n_A - \widetilde{M}^n_A, \hat{R}_1^n - \widetilde{R}_1^n, \hat{I}_1^n - \widetilde{I}_1^n) \RA 0 \qinq D^3 \qasq n \to \infty. 
$$
\end{lemma}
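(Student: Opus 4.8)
The plan is to exploit the common representation of the three differences as stochastic integrals against the compensated PRM $\overline{M}$ with the single predictable integrand $\rho_n(s,u)=n^{-1/2}\big(\varphi_n(s,u)-\widetilde\varphi_n(s,u)\big)$, over the $z$-ranges $[0,\infty)$, $[0,t-s)$ and $[t-s,\infty)$ respectively, together with the identities $\hat M^n_A=\hat I^n_1+\hat R^n_1$ and $\widetilde M^n_A=\widetilde I^n_1+\widetilde R^n_1$. Since each of the three limits is the deterministic zero process, joint convergence in $D^3$ follows from the three one-dimensional convergences, and it is even enough to prove two of them and obtain the third from that identity, using that $D$-convergence to $0$ is uniform convergence. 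Two estimates will drive the argument: the elementary identity $\rho_n^2=n^{-1/2}\rho_n$ (valid since $\rho_n\in\{0,n^{-1/2}\}$), which combined with $\int_0^\infty\rho_n(s,u)\,du=\sqrt n\,\lambda\,|\bar S(s^-)\bar I(s^-)-\bar S^n(s^-)\bar I^n(s^-)|$ yields $\int_0^\infty\rho_n(s,u)^2\,du=\lambda|\bar S(s^-)\bar I(s^-)-\bar S^n(s^-)\bar I^n(s^-)|\le\lambda n^{-1/2}(|\hat S^n(s^-)|+|\hat I^n(s^-)|)$; and the second-moment bounds of Lemma~\ref{lem-hatSn-t-2-bound}.

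First I would handle $\hat M^n_A-\widetilde M^n_A=\int_0^{\cdot}\int_0^\infty\int_0^\infty\rho_n(s,u)\,\overline M(ds,dz,du)$, which is a square-integrable $\{\sF^n_t\}$-martingale with predictable quadratic variation $\int_0^t\int_0^\infty\rho_n(s,u)^2\,du\,ds\le\lambda n^{-1/2}\int_0^t(|\hat S^n(s^-)|+|\hat I^n(s^-)|)\,ds$. By Lemma~\ref{lem-hatSn-t-2-bound} the expectation of the right-hand side is $O(n^{-1/2})$, so Doob's $L^2$ inequality gives $\E[\sup_{t\le T}(\hat M^n_A(t)-\widetilde M^n_A(t))^2]\to0$, hence $\hat M^n_A-\widetilde M^n_A\to0$ in $D$.

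Next I would treat $Y^n:=\hat R^n_1-\widetilde R^n_1$, where $Y^n(t)=\int_0^t\int_0^{t-s}\int_0^\infty\rho_n(s,u)\,\overline M(ds,dz,du)$, by the partition scheme used in the proof of Lemma~\ref{lem-Vn-SIR-conv}. For fixed $t$, the PRM isometry (for the predictable integrand $\rho_n$), the bound on $\int\rho_n^2\,du$, and Lemma~\ref{lem-hatSn-t-2-bound} give $\E[Y^n(t)^2]\le\lambda T n^{-1/2}\sup_n\E[\sup_{s\le T}(|\hat S^n(s)|+|\hat I^n(s)|)]\to0$, so $Y^n$ tends to $0$ at every finite set of times. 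For the oscillation over $[t,t+h]$, I would split $Y^n(t+h)-Y^n(t)$ into a ``$z$-range growth'' term $\int_0^t\int_{t-s}^{t+h-s}\int_0^\infty\rho_n\,\overline M$ and an ``$s$-range growth'' term $\int_t^{t+h}\int_0^{t+h-s}\int_0^\infty\rho_n\,\overline M$; writing each as an integral against $M$ minus the corresponding integral against the mean measure, both pieces are nondecreasing in $h$, so $\sup_{h\in[0,\delta]}$ of each is dominated by its value at $h=\delta$, and after enlarging the inner $z$-integral to $[0,\infty)$ the $M$-integrals reduce once more to compensated martingale increments plus their compensators. Using the isometry and $\rho_n^2=n^{-1/2}\rho_n$, the compensated parts contribute second moments of order $n^{-1/2}\delta$; the compensator parts are dominated by $\lambda\delta\sup_{s\le T}(|\hat S^n(s)|+|\hat I^n(s)|)$ and hence, by Lemma~\ref{lem-hatSn-t-2-bound}, contribute second moments of order $\delta^2$. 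Summing over the $\lceil T/\delta\rceil$ intervals of the partition gives $\sum_i\E[\sup_{h\le\delta}(Y^n(t_i+h)-Y^n(t_i))^2]=O(n^{-1/2})+O(\delta)$, so together with the finite-dimensional bound one obtains $\limsup_n\P(\sup_{t\le T}|Y^n(t)|>\ep)=O(\delta/\ep^2)$, and letting $\delta\to0$ shows $\hat R^n_1-\widetilde R^n_1\to0$ in $D$. Finally $\hat I^n_1-\widetilde I^n_1=(\hat M^n_A-\widetilde M^n_A)-(\hat R^n_1-\widetilde R^n_1)\to0$ in $D$, and the three convergences combine to the stated $D^3$ limit.

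The main obstacle is the oscillation estimate for $\hat R^n_1-\widetilde R^n_1$ (and symmetrically for $\hat I^n_1-\widetilde I^n_1$, were one to bound it directly). Unlike $\hat M^n_A-\widetilde M^n_A$, this process is not an $\{\sF^n_t\}$-martingale, because its inner integration region depends on $t$; one must therefore arrange the increment decomposition so that the supremum over a short window is squeezed between monotone endpoint quantities, and one must retain the $n^{-1/2}$ gain coming from $\int_0^\infty\rho_n^2\,du$: the compensator contribution alone is only of order $\delta$ per block and would not survive multiplication by the roughly $T/\delta$ blocks, whereas the full second-moment bound $O(n^{-1/2}\delta)+O(\delta^2)$ does.
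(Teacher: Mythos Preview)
Your strategy is the paper's strategy: reduce to $\hat M^n_A-\widetilde M^n_A$ and $\hat R^n_1-\widetilde R^n_1$, then use the partition scheme of Lemma~\ref{lem-Vn-SIR-conv} together with the PRM isometry and Lemma~\ref{lem-hatSn-t-2-bound}. The martingale treatment of $\hat M^n_A-\widetilde M^n_A$ via Doob's inequality is in fact a bit cleaner than the paper's indirect remark.

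There is, however, a slip that breaks your oscillation step as written. The integrand $\rho_n(s,u)=n^{-1/2}(\varphi_n-\widetilde\varphi_n)$ takes values in $\{-n^{-1/2},0,n^{-1/2}\}$, not $\{0,n^{-1/2}\}$: when $\bar S^n(s^-)\bar I^n(s^-)<\bar S(s^-)\bar I(s^-)$ one has $\varphi_n-\widetilde\varphi_n=-\bone\big(n\lambda\bar S^n\bar I^n<u\le n\lambda\bar S\bar I\big)$. So $\rho_n^2=n^{-1/2}|\rho_n|$, not $n^{-1/2}\rho_n$; this is harmless for the isometry computations, but it invalidates your monotonicity claim. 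Writing $\int\rho_n\,\overline M=\int\rho_n\,dM-\int\rho_n\,d\nu$ does \emph{not} produce two pieces nondecreasing in $h$, because $\rho_n$ is signed. Hence your bound on $\sup_{h\le\delta}|Y^n(t+h)-Y^n(t)|$ by the endpoint value is not justified.

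The paper repairs exactly this point: it splits $\widetilde\Xi^n=\hat R^n_1-\widetilde R^n_1$ according to the sign of $\bar S^n\bar I^n-\bar S\bar I$, so that each of the four resulting pieces has a nonnegative integrand and is genuinely monotone in $t$; it then dominates them by the processes $\Xi^n_1,\Xi^n_2$ built with $|\rho_n|$ and applies the increasing-process tightness criterion. Equivalently, you can bound $|Y^n(t+h)-Y^n(t)|$ by the $M$- and $\nu$-integrals of $|\rho_n|$ over the enlarged region; those \emph{are} monotone in $h$, and your subsequent estimates (compensated part $O(n^{-1/2}\delta)$, compensator part $O(\delta^2)$, summed over $\lceil T/\delta\rceil$ blocks) then go through as you wrote. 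With that correction your argument coincides with the paper's.
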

\begin{proof}
It suffices to prove the convergence of each coordinate separately. We focus on the convergence $ \hat{R}_1^n - \widetilde{R}_1^n\RA 0$, since the convergence $\hat{M}^n_A - \widetilde{M}^n_A$ follows similarly, and then the convergence $ \hat{I}_1^n - \widetilde{I}_1^n\RA 0$ follows by the facts that $
   \hat{M}^n_A(t) =   \hat{I}^n_1(t)  +  \hat{R}^n_1(t)$ and    $
   \widetilde{M}^n_A(t) =   \widetilde{I}^n_1(t)  +  \widetilde{R}^n_1(t),
   $ for each $t\ge 0$.

Let $\widetilde{\Xi}^n:= \hat{R}_1^n - \widetilde{R}_1^n$. 
It is easy to see that for each $t\ge 0$,
$
\E[\widetilde\Xi^n(t)]  = 0,
$
and 
$$
\E\left[\widetilde\Xi^n(t)^2\right] = \lambda \int_0^t F(t-s) \E\left[  |\bar{S}^n(s) \bar{I}^n(s) -  \bar{S}(s) \bar{I}(s)| \right] ds \to 0 \qasq n \to \infty,
$$
where the convergence holds by Theorem \ref{thm-FLLN-SIR} and the dominated convergence theorem.  
Then it suffices to show that the sequence $\{\widetilde\Xi^n: n \ge 1\}$ is tight. 
Note that $\widetilde\Xi^n$ can be written as $\widetilde\Xi^n(t) =\widetilde \Xi_1^n(t)- \widetilde\Xi_2^n(t)$, where 
\begin{align*}
 \widetilde\Xi_1^n(t) &:=  \frac{1}{\sqrt{n}} \int_0^t \int_{0}^{t} \int_{n\lambda ( \bar{S}^n(s^-) \bar{I}^n(s^-) \wedge \bar{S}(s) \bar{I}(s))}^{n\lambda ( \bar{S}^n(s^-) \bar{I}^n(s^-) \vee \bar{S}(s) \bar{I}(s))} \text{sign}(\bar{S}^n(s^-) \bar{I}^n(s^-)- \bar{S}(s) \bar{I}(s)) M_1(ds,dz,du),  \non\\
\widetilde\Xi_2^n(t) &:= \lambda \sqrt{n} \int_0^t F(t-s)  \big(\bar{S}^n(s) \bar{I}^n(s) - \bar{S}(s) \bar{I}(s)\big) ds . 
\end{align*}
Both processes $ \widetilde\Xi_1^n(t) $ and $ \widetilde\Xi_2^n(t) $ are differences of two processes, each increasing in $t$, that is,
\begin{align}
\widetilde \Xi_1^n(t) & =  \frac{1}{\sqrt{n}} \int_0^t \int_{0}^{t} \int_{n\lambda ( \bar{S}^n(s^-) \bar{I}^n(s^-) \wedge \bar{S}(s) \bar{I}(s))}^{n\lambda ( \bar{S}^n(s^-) \bar{I}^n(s^-) \vee \bar{S}(s) \bar{I}(s))} \bone(\bar{S}^n(s^-) \bar{I}^n(s^-)- \bar{S}(s) \bar{I}(s)>0) M_1(ds,dz,du)  \non\\
 & \quad -  \frac{1}{\sqrt{n}} \int_0^t \int_{0}^{t} \int_{n\lambda ( \bar{S}^n(s^-) \bar{I}^n(s^-) \wedge \bar{S}(s) \bar{I}(s))}^{n\lambda ( \bar{S}^n(s^-) \bar{I}^n(s^-) \vee \bar{S}(s) \bar{I}(s))} \bone(\bar{S}^n(s^-) \bar{I}^n(s^-)- \bar{S}(s) \bar{I}(s) < 0)  M_1(ds,dz,du),  \non
\end{align}
and
\begin{align*}
\widetilde\Xi_2^n(t) = \lambda \sqrt{n} \int_0^t F(t-s)  \big(\bar{S}^n(s) \bar{I}^n(s) - \bar{S}(s) \bar{I}(s)\big)^+  ds 
 - \lambda \sqrt{n} \int_0^t F(t-s)  \big(\bar{S}^n(s) \bar{I}^n(s) - \bar{S}(s) \bar{I}(s)\big)^{-} ds . 
\end{align*}
Define $\Xi^n_1$ and $\Xi^n_2$ by
$$
\Xi^n_1(t) :=  \frac{1}{\sqrt{n}} \int_0^t \int_{0}^{t} \int_{n\lambda ( \bar{S}^n(s^-) \bar{I}^n(s^-) \wedge \bar{S}(s) \bar{I}(s))}^{n\lambda ( \bar{S}^n(s^-) \bar{I}^n(s^-) \vee \bar{S}(s) \bar{I}(s))}M_1(ds,dz,du), 
$$
and
$$
\Xi^n_2(t) :=\lambda \sqrt{n} \int_0^t F(t-s)  \big|\bar{S}^n(s) \bar{I}^n(s) - \bar{S}(s) \bar{I}(s)\big| ds. 
$$
Since the integrand in the integral $\widetilde \Xi_1^n(t)$ (resp. $\widetilde\Xi_2^n(t)$) is nonnegative and bounded by that in $\Xi_1^n(t)$ (resp. $\Xi_2^n(t)$),  tightness of $\Xi_1^n(t) $ and $\Xi_2^n(t) $  implies 
 tightness of the four components in the above expressions of $ \widetilde\Xi_1^n(t) $ and $ \widetilde\Xi_2^n(t)$. 
By the increasing property of $\Xi_1^n(t) $ and $\Xi_2^n(t) $, we only need to verify the following (see the Corollary on page 83 in \cite{billingsley1999convergence} or the use of \eqref{supVn-decomp} in the proof of Lemma \ref{lem-Vn-SIR-conv}): 
for any $\ep>0$, and $i=1,2$, 
\begin{equation}\label{Xi-n-tight-p}
\limsup_{n\to\infty} \frac{1}{\delta}\P\big( \big|\Xi_i^n(t+\delta) - \Xi_i^n(t) \big| \ge \ep \big) \to 0\qasq \delta \to 0.
\end{equation}
For  $ \Xi_2^n(t) $, we have
\begin{align*} 
 \Xi_2^n(t+\delta) - \Xi_2^n(t) 
&=  \lambda \int_t^{t+\delta} F(t+\delta-s) \Delta^n(s) ds + \lambda\int_0^t (F(t+\delta-s) - F(t-s)) \Delta^n(s) ds  \\
&= \Xi_{2,1}^n(t,\delta)+\Xi_{2,2}^n(t,\delta),
\end{align*}
where
\begin{equation}\label{eqn-Delta-n}
\Delta^n(s) :=  \sqrt{n} \left|\bar{S}^n(s) \bar{I}^n(s) - \bar{S}(s)\bar{I}(s)\right| =  |\hat{S}^n(s)\bar{I}^n(s) + \bar{S}(s) \hat{I}^n(s)| \le |\hat{S}^n(s)|  +  | \hat{I}^n(s)|. 
\end{equation}
We have 
\begin{align*}
\E[(\Xi_{2,1}^n(t,\delta))^2]&\le\lambda^2\delta^2\sup_{0\le t\le T}\E[(\Delta^n(t))^2]\\
&\le C\lambda^2\delta^2,
\end{align*}
thanks to Lemma \ref{lem-hatSn-t-2-bound}.  Hence
\begin{equation*}
\limsup_{n\to\infty} \frac{1}{\delta}\P\big( \big|\Xi_{2,1}^n(t,\delta) \big| \ge \ep \big)\le C\frac{\delta}{\ep^2} \to 0\qasq \delta \to 0.
\end{equation*}
Next
\begin{align*}
\Xi_{2,2}^n(t,\delta))&\le\lambda\sup_{0\le t\le T}\Delta^n(t) \int_0^t (F(t+\delta-s) - F(t-s))  ds \\
&\le\lambda\delta\sup_{0\le t\le T}\Delta^n(t),
\end{align*} 
where the second inequality follows from the same argument in \eqref{deltaVn-bound-p3} for the integral, 
and
\begin{equation}\label{Xi22}
\frac{1}{\delta}\P\big( \big|\Xi_{2,2}^n(t,\delta) \big| \ge \ep \big)\le \frac{\lambda^2\delta}{\ep^2}\E\left[\sup_{0\le t\le T}[\Delta^n(t)|^2\right].
\end{equation}
So the wished result follows from Lemma \ref{lem-hatSn-t-2-bound}, and
  \eqref{Xi-n-tight-p} holds for  $\Xi^n_2(t)$.

\smallskip

For the process $\Xi^n_1(t)$, we have
\begin{align}
& \E\big[|\Xi_1^n(t+\delta) - \Xi_1^n(t)|^2\big]  \non\\ 
&=  \E\Bigg[ \Bigg( \frac{1}{\sqrt{n}} \int_t^{t+\delta} \int_{0}^{t+\delta} \int_{n\lambda ( \bar{S}^n(s^-) \bar{I}^n(s^-) \wedge \bar{S}(s) \bar{I}(s))}^{n\lambda ( \bar{S}^n(s^-) \bar{I}^n(s^-) \vee \bar{S}(s) \bar{I}(s))}  M_1(ds,dz,du)  \non\\
& \quad + \frac{1}{\sqrt{n}} \int_0^{t} \int_{t}^{t+\delta} \int_{n\lambda ( \bar{S}^n(s^-) \bar{I}^n(s^-) \wedge \bar{S}(s) \bar{I}(s))}^{n\lambda ( \bar{S}^n(s^-) \bar{I}^n(s^-) \vee \bar{S}(s) \bar{I}(s))}  M_1(ds,dz,du)  \Bigg)^2  \Bigg]  \non\\
& \le 
2 \E\Bigg[ \Bigg( \frac{1}{\sqrt{n}} \int_t^{t+\delta} \int_{0}^{t+\delta} \int_{n\lambda ( \bar{S}^n(s^-) \bar{I}^n(s^-) \wedge \bar{S}(s) \bar{I}(s))}^{n\lambda ( \bar{S}^n(s^-) \bar{I}^n(s^-) \vee \bar{S}(s) \bar{I}(s))}  M_1(ds,dz,du)   \Bigg)^2  \Bigg]  \non\\
 &\quad + 2
  \E\Bigg[ \Bigg(  \frac{1}{\sqrt{n}} \int_0^{t} \int_{t}^{t+\delta} \int_{n\lambda ( \bar{S}^n(s^-) \bar{I}^n(s^-) \wedge \bar{S}(s) \bar{I}(s))}^{n\lambda ( \bar{S}^n(s^-) \bar{I}^n(s^-) \vee \bar{S}(s) \bar{I}(s))}  M_1(ds,dz,du)  \Bigg)^2  \Bigg]  \non\\
  &=: B^n_1 +B^n_2.  \non
\end{align}
Note that we can write
\begin{align}
&  \frac{1}{\sqrt{n}} \int_t^{t+\delta} \int_{0}^{t+\delta} \int_{n\lambda ( \bar{S}^n(s^-) \bar{I}^n(s^-) \wedge \bar{S}(s) \bar{I}(s))}^{n\lambda ( \bar{S}^n(s^-) \bar{I}^n(s^-) \vee \bar{S}(s) \bar{I}(s))}  M_1(ds,dz,du) \non\\
& =  \frac{1}{\sqrt{n}} \int_t^{t+\delta} \int_{0}^{t+\delta} \int_{n\lambda ( \bar{S}^n(s^-) \bar{I}^n(s^-) \wedge \bar{S}(s) \bar{I}(s))}^{n\lambda ( \bar{S}^n(s^-) \bar{I}^n(s^-) \vee \bar{S}(s) \bar{I}(s))}  \widetilde{M}(ds,dz,du)  + \lambda  \int_t^{t+\delta}  F(t+\delta -s) \Delta^n(s) d s. \non 
\end{align}
Thus, we have the following bound
\begin{align}\label{eqn-An-bound}
B^n_1 &\le 2 \E \left[ \left( \frac{1}{\sqrt{n}} \int_t^{t+\delta} \int_{0}^{t+\delta} \int_{n\lambda ( \bar{S}^n(s^-) \bar{I}^n(s^-) \wedge \bar{S}(s) \bar{I}(s))}^{n\lambda ( \bar{S}^n(s^-) \bar{I}^n(s^-) \vee \bar{S}(s) \bar{I}(s))}  \widetilde{M}(ds,dz,du)\right)^2\right]  \non \\
& \quad + 2 \E\left[\left(  \lambda \int_t^{t+\delta}  F(t+\delta -s) \Delta^n(s)  d s  \right)^2\right] \non\\
& \le 2 \lambda  \int_t^{t+\delta}  F(t+\delta -s) \E \left[ |\bar{S}^n(s) \bar{I}^n(s) - \bar{S}(s) \bar{I}(s) |\right] ds  + 2 \lambda^2 \delta^2 \sup_{s \in [0,T]} \E[|\Delta^n(s)|^2]. 
 \end{align}
Similarly, we have
\begin{align}\label{eqn-Bn-bound}
B^n_2 & \le 2 \E \left[ \Bigg(  \frac{1}{\sqrt{n}} \int_0^{t} \int_{t}^{t+\delta} \int_{n\lambda ( \bar{S}^n(s^-) \bar{I}^n(s^-) \wedge \bar{S}(s) \bar{I}(s))}^{n\lambda ( \bar{S}^n(s^-) \bar{I}^n(s^-) \vee \bar{S}(s) \bar{I}(s))} d \widetilde{M}(ds,dz,du)  \Bigg)^2 \right]  \non\\
& \quad + 2 \E \left[ \left(\lambda \int_0^t (F(t+\delta -s) - F(t-s)) \Delta^n(s) ds \right)^2\right]  \non\\
& \le 2  \lambda \int_0^t (F(t+\delta -s) - F(t-s)) \E \left[ |\bar{S}^n(s) \bar{I}^n(s) - \bar{S}(s) \bar{I}(s) |\right] ds + 2  \E \left[ \Xi_{2,2}^n(t,\delta)^2\right] . 
\end{align}
It is straightforward that the first terms on the right hand sides of \eqref{eqn-An-bound} and \eqref{eqn-Bn-bound} converge to zero as $n\to\infty$ since 
$\E \left[ |\bar{S}^n(s) \bar{I}^n(s) - \bar{S}(s) \bar{I}(s) |\right] \to 0$ as $n\to \infty$ by Theorem \ref{thm-FLLN-SIR}, and by the dominated convergence theorem. 
Thus,  by \eqref{Xi22}, we have shown \eqref{Xi-n-tight-p} for  $\Xi^n_1(t)$.  This completes the proof. 
\end{proof}

Let 
$$\sG^{A}_t := \sigma \left\{\widetilde{M}([0,u]\times\RR_+^2):   0 \le u \le t \right\}, \quad t \ge 0, 
$$
and 
$$\sG^{R}_t := \sigma \left\{\widetilde{M}([0,u]\times[0,u]\times\RR_+):   0 \le u \le t \right\}, \quad t \ge 0. 
$$
Then $\widetilde{M}_A^n$ is a $\{\sG^{A}_t: t \ge0\}$-martingale with quadratic variation
 $$
\langle\widetilde{M}^n_A  \rangle(t) = \lambda \int_0^t \bar{S}(s) \bar{I}(s) ds, \quad t\ge 0,
$$
and  $\widetilde{R}_1^n$ is a 
 $\{\sG^{R}_t: t \ge0\}$-martingale, with quadratic variation
$$
\langle\widetilde{R}^n_1  \rangle(t) = \lambda \int_0^t F(t-s) \bar{S}(s) \bar{I}(s) ds, \quad t\ge 0. 
$$
Note that  we do not have a martingale property for $\widetilde{I}^n_1$. 
It is important to observe that the joint process $(\widetilde{M}_A^n, \widetilde{R}^n_1)$ is not a martingale with respect to a common filtration, and therefore we cannot prove the joint convergence of them using FCLT of martingales. However, they play the role of establishing tightness of the processes $\{\hat{M}^n_A\}$, $\{\hat{I}_1^n\}$, and $\{ \hat{R}^n_1\}$. Moreover, while $\{\hat{M}^n_A\}$ is a $\sF^n_t$ martingale,  $\{ \hat{R}^n_1\}$ is not a martingale, the point being that the intensity $\lambda n \bar{S}^n(t) \bar{I}^n(t)$ is not $\sG^R$--adapted. In fact, for the sake of establishing tightness, one can exploit the martingale property of $\{\hat{M}^n_A\}$, so that the introduction of $\widetilde{M}_A^n$ is not necessary. And since the tightness of $\hat{I}^n_A$ follows from those of both $\hat{M}_A^n$ and $\hat{R}^n_1$, only $ \widetilde{R}^n_1$ really needs to be introduced for proving tightness. However, in the proof of Lemma \ref{lem-hatMARI1-conv}, we shall now need the full strength of Lemma \ref{lem-hat-tilde-diff-SIR}.

\begin{lemma}\label{lem-hatMARI1-conv}
Under Assumption \ref{AS-SIR-2}, 
\begin{equation*} 
( \hat{M}_A^n, \hat{I}^n_1, \hat{R}_1^n) \RA (\hat{M}_A,  \hat{I}_1, \hat{R}_1) \qinq D^3 \qasq n \to\infty, 
\end{equation*}
where $ (\hat{M}_A,  \hat{I}_1, \hat{R}_1)$ are given in Theorem~\ref{thm-FCLT-SIR}. 
\end{lemma}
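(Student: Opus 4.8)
The plan is to pass from the processes in the statement to the auxiliary processes $\widetilde{M}^n_A,\widetilde{I}^n_1,\widetilde{R}^n_1$, whose integrands $\widetilde\varphi_n$ are \emph{deterministic}, so that the whole triple is a vector of linear functionals of the single compensated Poisson random measure $\widetilde{M}$. By Lemma~\ref{lem-hat-tilde-diff-SIR} it suffices to prove
$$(\widetilde{M}^n_A,\widetilde{I}^n_1,\widetilde{R}^n_1)\RA(\hat{M}_A,\hat{I}_1,\hat{R}_1)\qinq D^3\qasq n\to\infty.$$
Moreover $\widetilde{I}^n_1=\widetilde{M}^n_A-\widetilde{R}^n_1$, and by the white noise representation in Theorem~\ref{thm-FCLT-SIR} one has $\hat{I}_1=\hat{M}_A-\hat{R}_1$ with $\hat{M}_A$ and $\hat{R}_1$ continuous; since $J_1$-convergence to a continuous limit is uniform on compacts, it is enough to establish the \emph{joint} convergence $(\widetilde{M}^n_A,\widetilde{R}^n_1)\RA(\hat{M}_A,\hat{R}_1)$ in $D^2$, after which the subtraction map transfers the conclusion to the triple.

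For tightness I would argue coordinatewise, tightness in $D^2$ being equivalent to tightness of the two marginals. Each of $\widetilde{M}^n_A$ and $\widetilde{R}^n_1$ is a square-integrable martingale --- with respect to $\{\sG^A_t\}$, resp. $\{\sG^R_t\}$ --- with jumps bounded by $n^{-1/2}\to0$ and predictable quadratic variation equal to the deterministic continuous function $\lambda\int_0^\cdot\bar{S}(s)\bar{I}(s)\,ds$, resp. $\lambda\int_0^\cdot F(\cdot-s)\bar{S}(s)\bar{I}(s)\,ds$. The standard criterion for $C$-tightness of martingales with asymptotically negligible jumps and convergent predictable quadratic variation (see, e.g., \cite{whitt2002stochastic,pang2007martingale}) then shows each sequence is $C$-tight, hence $\{(\widetilde{M}^n_A,\widetilde{R}^n_1)\}$ is tight in $D^2$. (One could also redo the increment moment estimates used in Lemma~\ref{lem-hat-tilde-diff-SIR}.)

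The substantive step is the convergence of the finite-dimensional distributions to a \emph{jointly} Gaussian limit; here the martingale FCLT is of no use, because, as stressed just before the statement, $(\widetilde{M}^n_A,\widetilde{R}^n_1)$ is not a martingale for any common filtration. Instead, fix $0\le t_1<\cdots<t_k\le T$: the $\RR^{2k}$-vector $\bigl(\widetilde{M}^n_A(t_j),\widetilde{R}^n_1(t_j)\bigr)_{j\le k}$ is a linear functional, with deterministic coefficients, of $\widetilde{M}$, so by the Cram\'er--Wold device it suffices to show that each scalar combination $Z_n:=\sum_j\bigl(a_j\widetilde{M}^n_A(t_j)+b_j\widetilde{R}^n_1(t_j)\bigr)$ is asymptotically Gaussian. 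Each $Z_n$ can be written as $n^{-1/2}\int h_n\,d\widetilde{M}$ for a deterministic integrand $h_n$ which is uniformly bounded by $C:=\sum_j(|a_j|+|b_j|)$ (a bounded combination of indicators of the relevant regions in $(s,z,u)$) and which, after expanding the square, satisfies $n^{-1}\int h_n^2\,d\tilde\nu\to\sigma^2$, where $\sigma^2$ is the corresponding quadratic form in the covariances of $(\hat{M}_A,\hat{R}_1)$ recorded in the Remark following Theorem~\ref{thm-FCLT-SIR}. Since $Z_n$ is a centered Poisson integral,
$$\log\E\bigl[e^{i\theta Z_n}\bigr]=\int\Bigl(e^{i\theta h_n/\sqrt n}-1-\tfrac{i\theta h_n}{\sqrt n}\Bigr)\,d\tilde\nu\longrightarrow-\tfrac{\theta^2}{2}\,\sigma^2,$$
the limit following from $|h_n|\le C$ together with $n^{-1}\int h_n^2\,d\tilde\nu\to\sigma^2$ (the third-order remainder is $O(Cn^{-1/2}\cdot n^{-1}\int h_n^2\,d\tilde\nu)\to0$), so $Z_n\RA N(0,\sigma^2)$.

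Combining tightness with this finite-dimensional convergence yields $(\widetilde{M}^n_A,\widetilde{R}^n_1)\RA(\hat{M}_A,\hat{R}_1)$ in $D^2$; the limit lies in $C^2$, so the convergence is uniform and $\widetilde{I}^n_1=\widetilde{M}^n_A-\widetilde{R}^n_1\RA\hat{M}_A-\hat{R}_1=\hat{I}_1$ jointly with the other two, whereupon Lemma~\ref{lem-hat-tilde-diff-SIR} upgrades this to the claimed convergence of $(\hat{M}^n_A,\hat{I}^n_1,\hat{R}^n_1)$. The one genuinely delicate point --- and the reason the auxiliary $\widetilde{\cdot}$ processes were introduced --- is exactly the absence of a common martingale structure for $(\widetilde{M}^n_A,\widetilde{R}^n_1)$; the device that circumvents it is that freezing the intensity at its fluid value $n\lambda\bar{S}(s)\bar{I}(s)$ makes the integrands deterministic, so the joint law is Poissonian and the Cram\'er--Wold reduction to a one-dimensional Poisson central limit theorem applies. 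Matching the limiting variances $\sigma^2$ with the $W_F$-covariances is then a routine bookkeeping check, given the computations already displayed after Theorem~\ref{thm-FCLT-SIR}.
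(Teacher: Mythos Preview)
Your argument is correct and follows essentially the same route as the paper: reduce to the auxiliary processes via Lemma~\ref{lem-hat-tilde-diff-SIR}, obtain tightness of $\widetilde{M}^n_A$ and $\widetilde{R}^n_1$ from their (separate) martingale structures, and identify the joint finite-dimensional limits by a characteristic-function computation for deterministic integrands against the compensated PRM. Two small organizational differences are worth noting. First, for the finite-dimensional distributions the paper decomposes $[0,t]\times[0,\infty)$ into disjoint rectangles and invokes independence of the PRM on disjoint sets, computing the characteristic function on each rectangle; your Cram\'er--Wold reduction to a single Poisson integral $n^{-1/2}\!\int h_n\,d\widetilde{M}$ is the same computation packaged more compactly (and, as you implicitly use, $n^{-1}\!\int h_n^2\,d\tilde\nu$ is in fact \emph{equal} to $\sigma^2$, not merely convergent, since integrating out $u$ exactly cancels the factor $n$). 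Second, the paper establishes continuity of the limit $(\hat{M}_A,\hat{R}_1)$ separately via a Kolmogorov fourth-moment bound on the Gaussian limit, whereas you get it for free from $C$-tightness of the prelimit martingales; your route is slightly more economical here.
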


\begin{proof} In view of Lemma \ref{lem-hat-tilde-diff-SIR}, all we need to show is that
\begin{equation}  \label{eqn-conv-wt-MA-I1-R1}
(\widetilde{M}_A^n, \widetilde{I}_1^n, \widetilde{R}_1^n)  \RA (\hat{M}_A, \hat{I}_1,\hat{R}_1)  \qinq D^3 \qasq n \to\infty\,.
\end{equation}
Exploiting the martingale property of both $\widetilde{M}^n_A$ and $ \widetilde{R}^n_1$, we can show that each of these two processes is tight in $D$. 
 In fact, by the FCLT for square-integral martingales (see, e.g., Theorem 1.4 in Chapter 7 of \cite{ethier-kurtz}),  we have $\widetilde{M}_A^n \RA \hat{M}_A$ in $D$ as $n\to \infty$, where 
$$
 \hat{M}_A(t) = B_A\left(\lambda \int_0^{t} \bar{S}(s) \bar{I}(s) ds\right), \quad t\ge 0,
$$
and $\widetilde{R}_1^n \RA \hat{R}_1$ in $D$ as $n\to \infty$, where 
$$\hat{R}_1(t) = B_R \left(\lambda \int_0^t F(t-s) \bar{S}(s) \bar{I}(s) ds\right), \quad t\ge 0,
$$
where $B_A$ and $B_R$ are a standard Brownian motions. 
Note that we do not obtain joint convergence as discussed above, which we do not need for this lemma. 
 It is then clear that the difference $\widetilde{I}^n_1(t)  =   \widetilde{M}^n_A(t) -\widetilde{R}^n_1(t)$ is also tight.
 Thus, by Lemma \ref{lem-hat-tilde-diff-SIR},  the sequences $\{\hat{M}^n_A\}$, $\{\hat{I}_1^n\}$, and $\{ \hat{R}^n_1\}$ are tight. 
 Therefore, to prove \eqref{eqn-conv-wt-MA-I1-R1}, 
it remains to show  (i) convergence of finite dimensional distributions of $(\widetilde{M}_A^n, \widetilde{I}_1^n, \widetilde{R}_1^n)$ to those of $(\hat{M}_A, \hat{I}_1,\hat{R}_1)$ and (ii) the limits $(\hat{M}_A, \hat{I}_1,\hat{R}_1)$ are continuous. 

To prove the convergence of finite dimensional distributions of $(\widetilde{M}_A^n, \widetilde{I}_1^n, \widetilde{R}_1^n)$ to those of $(\hat{M}_A, \hat{I}_1,\hat{R}_1)$, by the independence of the restrictions of a PRM to disjoint subsets, 
 it suffices to show that for $0 \le t' \le t$ and $0 \le a \le b < \infty$, 
\begin{align}\label{eqn-char-SIR}
& \lim_{n\to\infty} \E\left[ \exp \left(i \frac{\vartheta}{\sqrt{n}} \int_{t'}^t \int_a^b \int_0^\infty  \widetilde{\varphi}_n(s,u) d \widetilde{M}(ds, dz, du)\right)\right]  \non\\
& = \exp \left( -\frac{\vartheta^2}{2} \lambda \int_{t'}^t (F(b-s) - F(a-s)) \bar{S}(s) \bar{I}(s) ds \right) . 
\end{align}
Recall that for a compensated PRM $\bar{N}$ with mean measure $\nu$ and a deterministic function $\phi$, we have 
\begin{equation}\label{eqn-PRM-char}
\E\left[\exp(i\vartheta \bar{N}(\phi))\right] = e^{-i \vartheta \nu(\phi)} \exp\left( \nu(e^{i\vartheta \phi} -1) \right),
\end{equation}
where $\nu(\phi) := \int \phi d \nu$. 
As a consequence, the left hand side of \eqref{eqn-char-SIR} is equal to 
\begin{align}
& \exp\left( - i \frac{\vartheta}{\sqrt{n}} \int_{t'}^t (F(b-s) - F(a-s)) \lambda n \bar{S}(s) \bar{I}(s) d s  \right) \non\\
& \times \exp\left( (e^{i \vartheta/\sqrt{n}}-1) \int_{t'}^t  (F(b-s) - F(a-s)) \lambda n \bar{S}(s) \bar{I}(s) d s\right). \non
\end{align}
Then the claim  \eqref{eqn-char-SIR} is immediate by applying Taylor expansion. 

Given the consistent finite dimensional distributions of $\hat{R}_1$, to show that the limit process $\hat{R}_1$ has a continuous version, it suffices to show that
\begin{equation} \label{E-hatR1-4th}
\E\left[( \hat{R}_1(t+\delta) -  \hat{R}_1(t)))^4 \right] \le c \delta^{2}. 
\end{equation}
This is immediate since as a consequence of  \eqref{eqn-char-SIR}, 
\begin{align*}
&\E\left[( \hat{R}_1(t+\delta) -  \hat{R}_1(t)))^4 \right] = 3 \left(\E\left[( \hat{R}_1(t+\delta) -  \hat{R}_1(t)))^2 \right]\right)^2 \\
& = 3 \left( \lambda\int_t^{t+\delta} F(t+\delta -s) \bar{S}(s) \bar{I}(s) ds + \lambda \int_0^t (F(t+\delta -s) - F(t-s)) \bar{S}(s) \bar{I}(s) ds  \right)^2  \\
& \le 6 \lambda^2 \delta^2 + 6 \lambda^2 \left( \int_0^t (F(t+\delta -s) - F(t-s)) \bar{S}(s) \bar{I}(s) ds\right)^2\\
&\le 6 \lambda^2 \delta^2 + 6 \lambda^2\left( \int_0^t (F(t+\delta -s) - F(t-s))  ds\right)^2\\
&\le 12\delta^2.  
\end{align*}
 Here the two equalities are by the Gaussian property of the limit $\hat{R}_1$ and direct calculations from its covariance function. The first inequality follows from the simple bound $(x+y)^2 \le 2x^2 + 2y^2$ and the first term term bounded by $\lambda \delta$. Next $\bar{S}(s) \bar{I}(s)\le1$, and the remaining bound follows from the computation leading to \eqref{deltaVn-bound-p3}.
The same property holds analogously for the processes $\hat{M}_A$ and $\hat{I}_1$. This completes the proof. \end{proof}

\smallskip
\noindent \emph{Completing the proof of Theorem \ref{thm-FCLT-SIR}.}
By Lemmas \ref{lem-hatInitials-conv-SIR} and \ref{lem-hatMARI1-conv}, 
 we first obtain the joint convergence 
  \begin{equation*}
 (\hat{I}^n(0) F_0^c(\cdot), \hat{I}^n(0) F_0(\cdot),  \hat{I}^n_0, \hat{R}^n_0,   \hat{M}_A^n,  \hat{I}^n_1, \hat{R}^n_1)\RA  \left(\hat{I}(0) F^c_0(\cdot), \hat{I}(0) F_0(\cdot), \hat{I}_0, \hat{R}_0, \hat{M}_A, \hat{I}_1, \hat{R}_1 \right) 
  \end{equation*}
 in $D^7$ as $n\to\infty$. 
 Since the limit processes $\hat{I}_0, \hat{R}_0, \hat{M}_A, \hat{I}_1, \hat{R}_1$ are continuous, we have the convergence: 
 $$
  (-\hat{M}_A^n, \hat{I}^n(0) F_0^c(\cdot)  + \hat{I}_0^n  + \hat{I}_1^n,  \hat{I}^n(0) F_0(\cdot)  + \hat{R}_0^n  + \hat{R}_1^n)   \RA ( -\hat{M}_A, \hat{I}(0) F_0^c(\cdot)  + \hat{I}_0 + \hat{I}_1,   \hat{I}(0) F_0^c(\cdot)  + \hat{R}_0 + \hat{R}_1),  
 $$
 in $D^3$ as $n\to\infty$. 
It follows from \eqref{hatIn-rep-SIR}, \eqref{hatRn-rep-SIR}, Theorem \ref{thm-FLLN-SIR}, Lemma \ref{lem-hatInitials-conv-SIR}, \ref{lem-hatSn-t-2-bound} and \ref{lem-hatMARI1-conv} that $(\hat{I}^n, \hat{R}^n)$ is tight in $D^2$, and any limit of a converging subsequence satisfies \eqref{SIR-Ihat} and \eqref{SIR-Rhat}, where we may replace $\hat{S}$ by $-\hat{I}-\hat{R}$, since $\hat{S}^n=-\hat{I}^n - \hat{R}^n$ for all $n$. From Lemma \ref{lem-Gamma-cont}, this characterizes uniquely  the limit, hence the whole sequence converges, and finally \eqref{eqn-FCLT-conv-SIR}, \eqref{SIR-Shat} follow readily from the above, and again the fact that $\hat{S}^n=-\hat{I}^n - \hat{R}^n$ for all $n$.
\hfill $\Box$

\section{Proof of the FLLN for the SEIR model} \label{sec-SEIR-FLLN-proof}

In this section we prove Theorem~\ref{thm-FLLN-SEIR}. 
The expressions and claims in \eqref{barAn-rep}--\eqref{barSn-conv1} hold by the same arguments, which we assume from now on. 
By slightly modifying the argument as for the process $\bar{I}^n$ in the SIR model, we  obtain that  
\begin{equation}
\bar{E}^n(\cdot)  \RA  \bar{E}(0) G_0^c(\cdot) +   \int_0^{\cdot}G^c(\cdot-s) d\bar{A}(s) \qinq D \non
\end{equation}
in probability as $n \to\infty$.
Recall $I^n(t)$ in \eqref{SEIR-In}. Define 
\begin{align}
\bar{I}^n_{0,1}(t) &:= \frac{1}{n}\sum_{j=1}^{I^n(0)} \bone(\eta_j^0 > t), \quad
\bar{I}^n_{0,2}(t) := \frac{1}{n} \sum_{j=1}^{E^n(0)} \bone (\xi_j^0 \le t) \bone(\xi_j^0+ \eta_j >t ), \non\\
\bar{I}^n_{1}(t) &:= \frac{1}{n}\sum_{i=1}^{A^n(t)} \bone(\tau_i^n+ \xi_i \le t) \bone(\tau_i^n+ \xi_i + \eta_i >t). \non 
\end{align}
By the FLLN of empirical processes, and by Assumption \ref{AS-SEIR-1}, 
 we have  
 \begin{align}\label{barIn-0102-conv}
 (\bar{I}^n_{0,1}, \bar{I}^n_{0,2}) \to (\bar{I}_{0,1}, \bar{I}_{0,2}) \qinq D^2
 \end{align}
 in probability as $n\to\infty$, where $\bar{I}_{0,1}:=\bar{I}(0) G^c_0(\cdot)$ and $\bar{I}_{0,2}:= \bar{E}(0) \Psi_0(\cdot).$

For the study of the process $\bar{I}^n_1$, we first consider 
\begin{align}
\breve{I}^n_1(t) := \E[\bar{I}^n_1(t) |\sF^n_t]  
&=  \frac{1}{n}\sum_{i=1}^{A^n(t)} \Psi(t-\tau_i^n) =      \int_0^t \Psi(t-s) d \bar{A}^n(s)  =  \bar{A}^n(t) - \int_0^t \bar{A}^n(s) d \Psi(t-s). \non 
\end{align}

Applying the continuous mapping theorem  to the map $x\in D \to  x-\int_0^{\cdot}  x(s)d\Psi(\cdot -s) \in D$, we obtain  
\begin{equation}\label{tildeIn-1-conv}
\breve{I}^n_1 \to \bar{I}_1  \qinq D
\end{equation}
 in probability as $n\to \infty$, where
\begin{equation}\label{barI-1-def}
 \bar{I}_1(t) :=\bar{A}(t) - \int_0^t \bar{A}(s) d \Psi(t-s) =   \int_0^t  \Psi(t-s) d \bar{A}(s),\quad t \ge 0.
\end{equation}
We now consider the difference
$$
V^n(t):= \bar{I}^n_1(t)-\breve{I}^n_1(t) = \frac{1}{n}\sum_{i=1}^{A^n(t)} \kappa^n_i(t),
$$
where
\begin{align}
\kappa^n_i(t) 
& =  \bone(\tau_i^n+ \xi_i \le t) \bone(\tau_i^n+ \xi_i + \eta_i >t) - \Psi(t-\tau^n_i).  \non
\end{align}
We next show the following lemma.
\begin{lemma}
For any $\ep>0$, 
\begin{equation}\label{Vn-dffi-0-SEIR}
\P\left(\sup_{t\in [0,T]} |V^n(t)| > \ep \right) \to 0 \qasq n \to \infty. 
\end{equation}
\end{lemma}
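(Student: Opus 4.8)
The plan is to transcribe the proof of Lemma~\ref{lem-Vn-SIR-conv}, replacing $\chi^n_i$ by $\kappa^n_i$ throughout; the only genuinely new points are that $\kappa^n_i(t)$ is built from a \emph{product} of two indicators rather than a single one, and that the compensator $\Psi$ is not monotone (it is, however, the difference $\Psi=G-\Phi$ of the two nondecreasing functions $G$ and $\Phi$, the latter being the c.d.f.\ of $\xi_i+\eta_i$). First I would record, exactly as in the SIR case, that $\E[\kappa^n_i(t)\mid\sF^n_t]=0$ and $\E[\kappa^n_i(t)\kappa^n_j(t)\mid\sF^n_t]=0$ for $i\ne j$, since conditionally on $\sF^n_t$ the variable $\bone(\tau^n_i+\xi_i\le t)\bone(\tau^n_i+\xi_i+\eta_i>t)$ is Bernoulli with mean $\Psi(t-\tau^n_i)$ and the pairs $(\xi_i,\eta_i)$ are mutually independent. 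Hence $\E[V^n(t)^2\mid\sF^n_t]=n^{-2}\sum_{i\le A^n(t)}\Psi(t-\tau^n_i)(1-\Psi(t-\tau^n_i))$, and writing $d\bar{A}^n=n^{-1/2}d\hat{M}^n_A+d\bar\Lambda^n$ via \eqref{barAn-rep} and using \eqref{barLambda-n-Lip} exactly as in the derivation of \eqref{Vn-t-conv} gives $\E[V^n(t)^2]\le\lambda t/n$, so $\P(|V^n(t)|>\ep)\le\lambda t/(n\ep^2)\to0$.

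Second, I would estimate the oscillations via the partition of $[0,T]$ into intervals of length $\delta$ and the decomposition \eqref{supVn-decomp}. The crucial new computation is a bound on $\sup_{u\in[0,\delta]}|V^n(t+u)-V^n(t)|$. Splitting the terms $i\le A^n(t)$ from the new terms $A^n(t)<i\le A^n(t+u)$ (each of the latter contributing at most $1$ in modulus, so altogether at most $\bar{A}^n(t+\delta)-\bar{A}^n(t)$), observing that for fixed $(\xi,\eta)$ the function $r\mapsto\bone(\xi\le r<\xi+\eta)$ is the indicator of an interval, so that $|\bone(\xi\le r_1<\xi+\eta)-\bone(\xi\le r_2<\xi+\eta)|\le\bone(r_1<\xi\le r_2)+\bone(r_1<\xi+\eta\le r_2)$ for $r_1\le r_2$, and using $|\Psi(r_2)-\Psi(r_1)|\le (G(r_2)-G(r_1))+(\Phi(r_2)-\Phi(r_1))$, one arrives at the bound, valid for $u\in[0,\delta]$,
\[
|V^n(t+u)-V^n(t)|\le \frac1n\sum_{i=1}^{A^n(t)}\!\bigl(\bone(t<\tau^n_i+\xi_i\le t+\delta)+\bone(t<\tau^n_i+\xi_i+\eta_i\le t+\delta)\bigr)+\int_0^t\!\bigl(G(t+\delta-s)-G(t-s)+\Phi(t+\delta-s)-\Phi(t-s)\bigr)d\bar{A}^n(s)+\bigl(\bar{A}^n(t+\delta)-\bar{A}^n(t)\bigr),
\]
every term on the right being nondecreasing in $\delta$.

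Third, I would bound the probability that each term on the right exceeds $\ep/4$ by $16\ep^{-2}$ times its second moment, following \eqref{deltaVn-bound-p1}--\eqref{deltaVn-bound-p4}. For the two empirical sums I would use the PRM analogue of Definition~\ref{def-PRM-SIR-1}, namely a PRM $M(ds,du,dv,d\theta)$ on $[0,T]\times\RR_+^3$ consisting of the Dirac masses at $(\tau^n_i,\xi_i,\eta_i,U^n_i)$ with mean measure $ds\,H(du,dv)\,d\theta$; then $n^{-1}\sum_{i\le A^n(t)}\bone(t<\tau^n_i+\xi_i\le t+\delta)$ is the integral of $n^{-1}\bone(\theta\le\lambda n\bar{S}^n(s^-)\bar{I}^n(s^-))$ against $M$ over $\{s\le t,\ t-s<u\le t+\delta-s\}$, and bounding by twice the compensated-integral-squared plus twice the mean-part-squared, with $\E[\overline{M}(\phi)^2]$ equal to the integral of $\phi^2$ against the compensator, yields a bound $\le 2n^{-1}\lambda\int_0^t(G(t+\delta-s)-G(t-s))ds+2(\lambda\int_0^t(G(t+\delta-s)-G(t-s))ds)^2$; the first term $\to0$ and, as in \eqref{deltaVn-bound-p3}, $\delta^{-1}(\lambda\int_0^t(G(t+\delta-s)-G(t-s))ds)^2\le\lambda^2\delta\to0$. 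The $\xi+\eta$ sum is identical with $G$ replaced by $\Phi$ and the inner set by $\{t-s<u+v\le t+\delta-s\}$. The terms $\int_0^t(G(t+\delta-s)-G(t-s))d\bar{A}^n$, $\int_0^t(\Phi(t+\delta-s)-\Phi(t-s))d\bar{A}^n$ and $\bar{A}^n(t+\delta)-\bar{A}^n(t)$ are treated as in the SIR proof via \eqref{barAn-rep}: the $\hat{M}^n_A$-parts are asymptotically negligible by \eqref{hatMnA-conv}, and the $\bar\Lambda^n$-parts are controlled by \eqref{barLambda-n-Lip} and \eqref{deltaVn-bound-p3}. Summing over the $[T/\delta]$ sub-intervals as in \eqref{supVn-decomp-ineq}, the estimate $\E[V^n(t)^2]\le\lambda t/n$ gives $\lim_n[T/\delta]\sup_t\P(|V^n(t)|\ge\ep/2)=0$ for each fixed $\delta$, while the second-moment bounds give $\lim_{\delta\to0}\limsup_n[T/\delta]\sup_t\P(\sup_{u\in[0,\delta]}|V^n(t+u)-V^n(t)|\ge\ep/2)=0$, and together these yield \eqref{Vn-dffi-0-SEIR}.

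I expect the main obstacle to be the oscillation step, and within it the derivation of the displayed increment bound in a form where every term is monotone in $\delta$: this is what forces the splitting of the product indicator into the two boundary-crossing events and the use of $\Psi=G-\Phi$ to absorb the non-monotonicity of $\Psi$. Once that bound is in place, the remaining estimates are a mechanical transcription of the proof of Lemma~\ref{lem-Vn-SIR-conv}.
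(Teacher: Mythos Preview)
Your proof is correct and follows the same overall strategy as the paper (partition of $[0,T]$, pointwise bound $\E[V^n(t)^2]\le\lambda t/n$ via orthogonality, oscillation control via a monotone-in-$\delta$ bound and PRM second moments). The one genuine difference is in how you decompose the increment of the product indicator. The paper writes
\[
\bone(\xi\le r_2)\bone(\xi+\eta>r_2)-\bone(\xi\le r_1)\bone(\xi+\eta>r_1)
=\bone(\xi\le r_2)\bigl(\bone(\xi+\eta>r_1)-\bone(\xi+\eta>r_2)\bigr)+\bigl(\bone(\xi\le r_2)-\bone(\xi\le r_1)\bigr)\bone(\xi+\eta>r_1),
\]
keeping the extra indicator factors, and correspondingly splits $\Psi(r_2)-\Psi(r_1)$ through the conditional c.d.f.\ $F(\cdot|v)$; this yields five terms in \eqref{Vn-conv-SEIR-p1}--\eqref{Vn-conv-SEIR-p2} and requires the mixed estimates \eqref{Vn-conv-SEIR-p3}--\eqref{Vn-conv-SEIR-p7} involving $F^c(\cdot|v)$. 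Your decomposition $|\bone(\xi\le r_1<\xi+\eta)-\bone(\xi\le r_2<\xi+\eta)|\le\bone(r_1<\xi\le r_2)+\bone(r_1<\xi+\eta\le r_2)$ together with $|\Psi(r_2)-\Psi(r_1)|\le(G(r_2)-G(r_1))+(\Phi(r_2)-\Phi(r_1))$ is cleaner: it reduces the SEIR oscillation bound to two independent copies of the SIR computation, one with $G$ (the law of $\xi$) and one with $\Phi$ (the law of $\xi+\eta$), so the estimates \eqref{deltaVn-bound-p2}--\eqref{deltaVn-bound-p4} carry over verbatim without ever invoking the conditional law $F(\cdot|u)$. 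Both routes give the same $O(\delta)$ control after dividing by $\delta$; yours is shorter, while the paper's retains the conditional structure that reappears in the FCLT proof via Definition~\ref{def-PRM-SEIR-2}.
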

\begin{proof}
We partition $[0,T]$ into intervals of length $\delta>0$, and have the bound for $\sup_{t\in [0,T]}|V^n(t)|$ as in \eqref{supVn-decomp}. 

First, we have
\begin{align*}
\E[\kappa^n_i(t) |\sF^n_t] = 0, \quad \forall \,  i;\quad
 \E[\kappa^n_i(t)\kappa^n_j(t)  |\sF^n_t] = 0, \quad \forall \, i\neq j.
\end{align*}
Thus
\begin{align}
&\E[V^n(t)^2|\sF^n_t] = \frac{1}{n^2}\sum_{i=1}^{A^n(t)} \E[\kappa^n_i(t)^2 |\sF^n_t] \non\\
&=  \frac{1}{n^2}\sum_{i=1}^{A^n(t)} \Psi(t-\tau^n_i) (1- \Psi(t-\tau^n_i))  = \frac{1}{n} \int_0^t \Psi(t-s) \left( 1-\Psi(t-s)\right) d \bar{A}^n(s)  \non\\
& = \frac{1}{n^{3/2}} \int_0^t \Psi(t-s) \left( 1-\Psi(t-s)\right) d\hat{M}_A^n(s)  +  \frac{1}{n} \int_0^t \Psi(t-s) \left( 1-\Psi(t-s)\right) d \bar{\Lambda}^n(s) \non\\
& \le \frac{1}{n^{3/2}} \int_0^t \Psi(t-s) \left( 1-\Psi(t-s)\right) d \hat{M}_A^n(s) + \frac{\lambda t}{n}, \non
\end{align}
where  the inequality follows from \eqref{barSIn-bound} and  \eqref{barLambda-n-Lip}. Thus
\begin{equation}\label{Vn-t-conv-SEIR}
\E\big[V^n(t)^2\big]\le \frac{\lambda t}{n},\quad \P(|V^n(t)|>\ep )  \le \frac{\lambda t}{\ep^2n}. 
\end{equation}

Next we have 
\begin{align}
&  |V^n(t+u) - V^n(u)|   \non \\
&=  \left|  \frac{1}{n}\sum_{i=1}^{A^n(t+u)} \kappa^n_i(t+u) -   \frac{1}{n}\sum_{i=1}^{A^n(t)} \kappa^n_i(t)  \right| \non\\
& =  \left| \frac{1}{n}\sum_{i=1}^{A^n(t)} (\kappa^n_i(t+u) - \kappa^n_i(t))  +   \frac{1}{n}\sum_{i=A^n(t)}^{{A^n(t+u)}} \kappa^n_i(t+u) \right| \non\\
&\le    \left| \frac{1}{n}\sum_{i=1}^{A^n(t)} ( \bone(\tau_i^n+ \xi_i \le t+u) \bone(\tau_i^n+ \xi_i + \eta_i >t+u)  -  \bone(\tau_i^n+ \xi_i \le t) \bone(\tau_i^n+ \xi_i + \eta_i >t) ) \right| \non\\
& \quad + \left|  \int_0^{t} \left( \Psi(t+u-s) - \Psi(t-s) \right)   d\bar{A}^n(s) \right|   +   \frac{1}{n}\sum_{i=A^n(t)}^{{A^n(t+u)}}| \kappa^n_i(t+u)|  \non \\
& \le  \frac{1}{n}\sum_{i=1}^{A^n(t)}  \bone(\tau_i^n+ \xi_i \le t+u) ( \bone(\tau_i^n+ \xi_i + \eta_i >t) -\bone(\tau_i^n+ \xi_i + \eta_i >t+u) )  \non\\
& \quad +  \frac{1}{n}\sum_{i=1}^{A^n(t)} ( \bone(\tau_i^n+ \xi_i \le t+u) -  \bone(\tau_i^n+ \xi_i \le t))  \bone(\tau_i^n+ \xi_i + \eta_i >t)   \non\\
& \quad + \int_0^t \left( \int_0^{t-s+u} (F^c(t-s-v|v) - F^c(t+u-s-v|v))d G(v) \right) d \bar{A}^n(s)  \non\\
& \quad +  \int_0^{t} \left( \int_{t-s}^{t-s+u} F^c(t-s-v|v)d G(v) \right) d \bar{A}^n(s)   +   \frac{1}{n}\sum_{i=A^n(t)}^{{A^n(t+u)}}| \kappa^n_i(t+u)|. \non
\end{align}
Observing that the first four terms on the right hand side are all increasing in $u$, and that $| \kappa^n_i(t)| \le 1$ for all $t, i, n$, we obtain that 
\begin{align} \label{Vn-conv-SEIR-p1}
& \sup_{u\in [0,\delta]} |V^n(t+u) - V^n(u)|   \non \\
& \le  \frac{1}{n}\sum_{i=1}^{A^n(t)}  \bone(\tau_i^n+ \xi_i \le t+\delta) ( \bone(\tau_i^n+ \xi_i + \eta_i >t) -\bone(\tau_i^n+ \xi_i + \eta_i >t+\delta) )  \non\\
& \quad +  \frac{1}{n}\sum_{i=1}^{A^n(t)} ( \bone(\tau_i^n+ \xi_i \le t+\delta) -  \bone(\tau_i^n+ \xi_i \le t))  \bone(\tau_i^n+ \xi_i + \eta_i >t)   \non\\
& \quad + \int_0^t \left( \int_0^{t-s+\delta} (F^c(t-s-v|v) - F^c(t+\delta-s-v|v))d G(v) \right) d \bar{A}^n(s)  \non\\
& \quad +  \int_0^{t} \left( \int_{t-s}^{t-s+\delta} F^c(t-s-v|v)d G(v) \right) d \bar{A}^n(s)    + ( \bar{A}^n(t+\delta) -\bar{A}^n(t)). 
\end{align}
Thus, for any $\ep>0$, 
\begin{align} \label{Vn-conv-SEIR-p2}
&\P \left( \sup_{u\in [0,\delta]} |V^n(t+u) - V^n(u)|  > \ep \right)  \\
& \le \P \left(\frac{1}{n}\sum_{i=1}^{A^n(t)}  \bone(\tau_i^n+ \xi_i \le t+\delta) \bone(t< \tau_i^n+ \xi_i + \eta_i \le t+\delta)  >\ep/5\right)  \non\\
& \quad + \P \left( \frac{1}{n}\sum_{i=1}^{A^n(t)}  \bone(t < \tau_i^n+ \xi_i \le t+\delta) \bone(\tau_i^n+ \xi_i + \eta_i >t)  >\ep/5\right) \non\\
& \quad +\P \left( \int_0^t \left( \int_0^{t-s+\delta} (F^c(t-s-v|v) - F^c(t+\delta-s-v|v))d G(v) \right) d \bar{A}^n(s) >\ep/5\right) \non\\
& \quad +  \P \left(\int_0^{t} \left( \int_{t-s}^{t-s+\delta} F^c(t-s-v|v)d G(v) \right) d \bar{A}^n(s) >\ep/5\right)  +\P \left( ( \bar{A}^n(t+\delta) -\bar{A}^n(t))>\ep/5\right). \non
\end{align}
We need the following definition to treat the first two terms on the right hand side of \eqref{Vn-conv-SEIR-p2}. 
\begin{definition} \label{def-PRM-SEIR-1}
 Define a PRM  $M(ds,dy,dz,du)$ on $[0,T]\times \RR_+ \times \RR_+ \times  \RR_+$ with  mean measure $\nu(ds, dy, dz, du)=ds H(dy, dz) du $. Denote the compensated PRM by $\overline{M}(ds,dy,dz,du)$. 
 \end{definition}
 
 For the first term on the right hand side of \eqref{Vn-conv-SEIR-p2}, we have
 \begin{align}\label{Vn-conv-SEIR-p3}
 & \E\left[ \left(\frac{1}{n}\sum_{i=1}^{A^n(t)}  \bone(\tau_i^n+ \xi_i \le t+\delta) \bone(t<\tau_i^n+ \xi_i + \eta_i \le t+\delta) \right)^2 \right]  \non\\
 & = \E\left[ \left(\frac{1}{n}\int_0^t  \int_0^{t+\delta-s} \int_{t-s-y}^{t+\delta-s-y} \int_0^{n\lambda \bar{S}^n(s^-) \bar{I}^n(s^-)}  M(ds, dy, dz, du)\right)^2 \right]  \non\\
 &\le 2\E\left[ \left(\frac{1}{n}\int_0^t  \int_0^{t+\delta-s} \int_{t-s-y}^{t+\delta-s-y} \int_0^{n\lambda \bar{S}^n(s^-) \bar{I}^n(s^-)}  \overline{M}(ds, dy, dz, du)\right)^2 \right]  \non\\
 & \quad + 2 \E\left[ \left( \int_0^t \left( \int_0^{t-s+\delta} (F^c(t-s-v|v) - F^c(t+\delta-s-v|v))d G(v) \right) d \bar{\Lambda}^n(s)\right)^2 \right]  \non\\
 & = \frac{2}{n}\E\left[  \int_0^t \left( \int_0^{t-s+\delta} (F^c(t-s-v|v) - F^c(t+\delta-s-v|v))d G(v) \right) d \bar{\Lambda}^n(s) \right]  \non\\
 & \quad + 2 \E\left[ \left( \int_0^t \left( \int_0^{t-s+\delta} (F^c(t-s-v|v) - F^c(t+\delta-s-v|v))d G(v) \right) d \bar{\Lambda}^n(s)\right)^2 \right]  \non\\
 & \le \frac{2}{n} \lambda \int_0^t \left( \int_0^{t-s+\delta} (F^c(t-s-v|v) - F^c(t+\delta-s-v|v))d G(v) \right) d s \non\\
 & \quad + 2  \left( \lambda \int_0^t \left( \int_0^{t-s+\delta} (F^c(t-s-v|v) - F^c(t+\delta-s-v|v))d G(v) \right) d s\right)^2  \non\\
 & = \frac{2}{n} \lambda \int_0^{t+\delta}  \left( \int_0^{t-v+\delta}   (F^c(t-s-v|v) - F^c(t+\delta-s-v|v))  ds \right) d G(v) \non\\
 & \quad + 2  \left( \lambda  \int_0^{t+\delta}  \left( \int_0^{t-v+\delta}   (F^c(t-s-v|v) - F^c(t+\delta-s-v|v))  ds \right) d G(v) \right)^2. 
 \end{align}
 Here the second inequality uses  \eqref{barLambda-n-Lip}. 
 The first term on the right hand side of \eqref{Vn-conv-SEIR-p3} converges to zero as $n\to\infty$. 
 It is easily seen, by the same argument as that leading to \eqref{deltaVn-bound-p3}, that
 \[ \int_0^{t-v+\delta}(F^c(t-s-v|v)-F^c(t+\delta-s-v|v))ds\le \delta\,.\]
 Consequently, 
 \begin{equation}\label{Vn-conv-SEIR-p4}
 \frac{1}{\delta}\left(   \int_0^{t+\delta}  \left( \int_0^{t-v+\delta}   (F^c(t-s-v|v) - F^c(t+\delta-s-v|v))  ds \right) d G(v) \right)^2\le\delta\to 0 \qasq \delta \to 0. 
 \end{equation}

 Similarly, for the second term on the right hand side of \eqref{Vn-conv-SEIR-p2}, we have
 \begin{align}\label{Vn-conv-SEIR-p5}
 & \E\left[ \left( \frac{1}{n}\sum_{i=1}^{A^n(t)} \bone(t <\tau_i^n+ \xi_i \le t+\delta)   \bone(\tau_i^n+ \xi_i + \eta_i >t) \right)^2 \right] \non\\
 & = \E\left[ \left( \frac{1}{n} \int_0^t \int_{t-s}^{t+\delta-s} \int_{t-s-y}^\infty \int_0^{n\lambda \bar{S}^n(s^-) \bar{I}^n(s^-)}  M(ds, dy, dz, du)  \right)^2 \right] \non\\
 & \le 2 \E\left[ \left( \frac{1}{n} \int_0^t \int_{t-s}^{t+\delta-s} \int_{t-s-y}^\infty \int_0^{n\lambda \bar{S}^n(s^-) \bar{I}^n(s^-)}  \overline{M}(ds, dy, dz, du)  \right)^2 \right] \non\\
 & \quad + 2  \E\left[ \left( \int_0^{t} \left( \int_{t-s}^{t-s+\delta} F^c(t-s-v|v)d G(v) \right) d \bar{\Lambda}^n(s) \right)^2 \right] \non\\
 & = \frac{2}{n}  \E\left[ \int_0^{t} \left( \int_{t-s}^{t-s+\delta} F^c(t-s-v|v)d G(v) \right) d \bar{\Lambda}^n(s) \right] \non\\
 & \quad + 2  \E\left[ \left( \int_0^{t} \left( \int_{t-s}^{t-s+\delta} F^c(t-s-v|v)d G(v) \right) d \bar{\Lambda}^n(s) \right)^2 \right] \non\\
 & \le  \frac{2}{n} \lambda \int_0^{t} \left( \int_{t-s}^{t-s+\delta} F^c(t-s-v|v)d G(v) \right) d s \non\\
 & \quad + 2  \left( \lambda \int_0^{t} \left( \int_{t-s}^{t-s+\delta} F^c(t-s-v|v)d G(v) \right) d s \right)^2. 
 \end{align}
 Again, here the second inequality uses  \eqref{barLambda-n-Lip}. The first term on the right hand side of \eqref{Vn-conv-SEIR-p5} converges to zero as $n\to\infty$. 
We have
 \begin{align}\label{Vn-conv-SEIR-p6}
 \frac{1}{\delta} \left( \lambda \int_0^{t} \left( \int_{t-s}^{t-s+\delta} F^c(t-s-v|v)d G(v) \right) d s \right)^2
 &\le\frac{\lambda^2}{\delta}\left(\int_0^t (G(t-s+\delta)-G(t-s))ds\right)^2 \non \\
 &\le\lambda^2\delta \to 0 \qasq \delta \to 0. 
 \end{align}
 where the second inequality follows from the argument used for establishing \eqref{deltaVn-bound-p3}.
For the third term on the right hand side of \eqref{Vn-conv-SEIR-p2}, by \eqref{barAn-rep}, we have
\begin{align}\label{Vn-conv-SEIR-p7}
& \E\left[ \left( \int_0^t \left( \int_0^{t-s+\delta} (F^c(t-s-v|v) - F^c(t+\delta-s-v|v))d G(v) \right) d \bar{A}^n(s)\right)^2 \right]  \non\\
& \le 2 \E\left[ \left( \frac{1}{\sqrt{n}}\int_0^t \left( \int_0^{t-s+\delta} (F^c(t-s-v|v) - F^c(t+\delta-s-v|v))d G(v) \right) d \hat{M}_A^n(s)\right)^2 \right]  \non\\
& \quad + 2 \E\left[ \left( \int_0^t \left( \int_0^{t-s+\delta} (F^c(t-s-v|v) - F^c(t+\delta-s-v|v))d G(v) \right) d \bar{\Lambda}^n(s) \right)^2 \right]. 
\end{align}
Then by \eqref{hatMnA-conv} the first term converges to zero as $n\to\infty$, and the second term can be treated similarly as the second term in \eqref{Vn-conv-SEIR-p3}. 
The fourth term in  \eqref{Vn-conv-SEIR-p2} can be treated similarly. The last term in  \eqref{Vn-conv-SEIR-p2} 
 is the same as in \eqref{deltaVn-bound-p4}. 
 Therefore, by combining the above arguments and  \eqref{Vn-conv-SEIR-p2}-- \eqref{Vn-conv-SEIR-p7}, we obtain 
  \begin{equation}
\lim_{\delta \to 0} \limsup_{n\to\infty}  \,   \left[\frac{T}{\delta}\right] \,\sup_{0\le t\le T}   \P\left(\sup_{u\in [0,\delta]}|V^n(t+u) - V^n(t)|  \ge \ep \right) =0.  \non
\end{equation}
Then by \eqref{supVn-decomp-ineq} and \eqref{Vn-t-conv-SEIR}, we conclude that   \eqref{Vn-dffi-0-SEIR} holds. 
 \end{proof}

 By \eqref{tildeIn-1-conv} and  \eqref{Vn-dffi-0-SEIR}, we have 
$
\bar{I}^n_1 \to \bar{I}_1$ in $ D$ in probability as $ n\to\infty. 
$
Combining this with the convergences of $(\bar{I}^n_{0,1},\bar{I}^n_{0,2})$ in \eqref{barIn-0102-conv}, by independence of $(\bar{I}^n_{0,1},\bar{I}^n_{0,2})$ and $\bar{I}^n_1$, we have 
$
\bar{I}^n = \bar{I}^n_{0,1} +  \bar{I}^n_{0,2} +  \bar{I}^n_{1} \to  \bar{I} =  \bar{I}_{0,1} +  \bar{I}_{0,2} +  \bar{I}_{1}$ in $ D$ in probability as $n\to\infty. 
$

Similar to the SIR model, we can show 
 the joint convergence $(\bar{S}^n, \breve{I}^n) \to (\bar{S}, \bar{I})$ in $D^2$ in probability as $ n\to\infty$. 
Thus, using a similar argument as in the SIR model, we have shown that the limits $(\bar{S},\bar{I})$ of  $(\bar{S}^n,\bar{I}^n)$ satisfy the integral equations \eqref{SEIR-barS} and \eqref{SEIR-barI}. Similarly to the SIR model, these two equations have a unique solution. Once the solutions of  $(\bar{S},\bar{I})$ are uniquely determined, the other limits $\bar{A}, \bar{E}, \bar{L}, \bar{R}$ are also uniquely determined by the corresponding integral equations. This proves the convergence in probability.  Therefore the proof of Theorem \ref{thm-FLLN-SEIR} is complete.

\section{Proof of the FCLT for the SEIR model}  \label{sec-SEIR-FCLT-proof}

In this section we prove Theorem~\ref{thm-FCLT-SEIR}, for the 
diffusion-scaled processes $(\hat{S}^n, \hat{E}^n,  \hat{I}^n, \hat{R}^n)$ defined in \eqref{SEIR-diff-def}.
Similarly to the SIR model, under Assumption~\ref{AS-SEIR-2}, we have 
 $(\bar{I}^n(0), \bar{E}^n(0)) \RA (\bar{I}(0),  \bar{E}(0))\in \RR^2_{+}$  as $n\to\infty$, and thus the FLLN Theorem~\ref{thm-FLLN-SEIR} holds, which will be taken as given in the proof below. Recall the martingale  $\hat{M}_A^n$ defined in \eqref{hatMn-def}. 
 
 We have the following representation of the diffusion-scaled processes. 
We have the same representation of  $\hat{S}^n$ in \eqref{hatSn-rep-SIR} for the SIR model. 
For the ease of exposition, we repeat the following expression for the process $\hat{S}^n$: 
  \begin{align}
  \hat{S}^n(t) 
  & = -\hat{I}^n(0)-\hat{M}_A^n(t) -   \lambda  \int_0^t \left( \hat{S}^n(s) \bar{I}^n(s) + \bar{S}(s) \hat{I}^n(s) \right) ds. \non
 \end{align} 
For the process $\hat{E}^n$, 
\begin{align}
\hat{E}^n(t)  &=  \hat{E}^n(0) G_0^c(t)  + \hat{E}_0^n(t)  + \hat{E}_1^n(t)  +  \lambda  \int_0^t G^c(t-s) \left( \hat{S}^n(s) \bar{I}^n(s) + \bar{S}(s) \hat{I}^n(s) \right) ds, \non
\end{align}
where 
\begin{align*}
\hat{E}_0^n(t) &:= \frac{1}{\sqrt{n}} \sum_{j=1}^{n\bar{E}^n(0)} \big( \bone(\xi^0_j > t) -  G_0^c(t)\big), \\
\hat{E}_1^n(t) &:=\frac{1}{\sqrt{n}}   \sum_{i=1}^{n\bar{A}^n(t)} \bone(\tau^n_i + \xi_i >t) - \sqrt{n} \lambda \int_0^t G^c(t-s) \bar{S}^n(s) \bar{I}^n(s) ds.  \end{align*}
For the process $\hat{I}^n$, 
 \begin{align}\label{hatIn-SEIR-rep}
 \hat{I}^n(t) & = \hat{I}^n(0) F^c_0(t) +  \hat{E}^n(0) \Psi_0(t) +  \hat{I}^n_{0,1}(t)  + \hat{I}^n_{0,2}(t) +  \hat{I}^n_{1}(t) \non\\
 & \qquad    + \lambda \int_0^t \Psi(t-s)
\left ( \hat{S}^n(s) \bar{I}^n(s) + \bar{S}(s) \hat{I}^n(s) \right) ds, 
 \end{align}
 where 
 \begin{equation}
  \hat{I}^n_{0,1}(t) = \frac{1}{\sqrt{n}} \sum_{j=1}^{I^n(0)} \big( \bone(\eta_j^0 > t)- F_0^c(t)\big), \quad
     \hat{I}^n_{0,2}(t) = \frac{1}{\sqrt{n}}  \sum_{j=1}^{E^n(0)} \left(\bone (\xi_j^0 \le t) \bone(\xi_j^0+ \eta_j >t ) -\Psi_0(t) \right), \non
 \end{equation}
 and
 \begin{align}
   \hat{I}^n_{1}(t)   &= \frac{1}{\sqrt{n}}  \sum_{i=1}^{A^n(t)}  \bone(\tau_i^n+ \xi_i \le t) \bone(\tau_i^n+ \xi_i + \eta_i >t)  -  \lambda \sqrt{n} \int_0^t \Psi(t-s)
    \bar{S}^n(s) \bar{I}^n(s) ds. \non
 \end{align}
 For the process $\hat{R}^n$, 
 \begin{align}
 \hat{R}^n(t) & = \hat{I}^n(0) F_0(t) +  \hat{E}^n(0) \Phi_0(t) +  \hat{R}^n_{0,1}(t)  + \hat{R}^n_{0,2}(t) +  \hat{R}^n_{1}(t)  \non\\
 & \qquad    + \lambda \int_0^t \Phi(t-s)
\left ( \hat{S}^n(s) \bar{I}^n(s) + \bar{S}(s) \hat{I}^n(s) \right) ds, \non
 \end{align}
 where 
 \begin{equation}
  \hat{R}^n_{0,1}(t) = \frac{1}{\sqrt{n}} \sum_{j=1}^{I^n(0)} \big( \bone(\eta_j^0 \le t)- F_0(t)\big),\quad   \hat{R}^n_{0,2}(t) = \frac{1}{\sqrt{n}}  \sum_{j=1}^{E^n(0)} \left( \bone(\xi_j^0+ \eta_j \le t ) - \Phi_0(t)  \right), \non
 \end{equation}
 and
 \begin{align}
   \hat{R}^n_{1}(t)   &= \frac{1}{\sqrt{n}}  \sum_{i=1}^{A^n(t)}   \bone(\tau_i^n+ \xi_i + \eta_i \le t)  -  \lambda \sqrt{n} \int_0^t  \Phi(t-s)
   \bar{S}^n(s) \bar{I}^n(s) ds. \non 
 \end{align}

To facilitate the proof, we also define the process $\hat{L}^n$ (recall that $L^n(t)=I^n(t)+R^n(t)-I^n(0)$) :
\begin{align}
\hat{L}^n(t) &:= \sqrt{n} \left( \bar{L}^n(t) - \bar{L}(t) \right) = \sqrt{n} \left( \bar{L}^n(t) - \left(  \bar{E}(0) G_0(t) +  \lambda \int_0^t G(t-s) \bar{S}(s) \bar{I}(s) ds\right)\right). \non 
\end{align}
It has the following representation: 
\begin{align}
\hat{L}^n(t)  &=  \hat{E}^n(0) G_0(t)  + \hat{L}_0^n(t)  + \hat{L}_1^n(t)   +  \lambda  \int_0^t G(t-s) \left( \hat{S}^n(s) \bar{I}^n(s) + \bar{S}(s) \hat{I}^n(s) \right) ds, \non
\end{align}
where 
\begin{align}
\hat{L}_0^n(t) &:= \frac{1}{\sqrt{n}} \sum_{j=1}^{n\bar{E}^n(0)} \big( \bone(\xi^0_j \le t) -  G_0(t)\big), \non\\
\hat{L}_1^n(t) &:=\frac{1}{\sqrt{n}}   \sum_{i=1}^{n\bar{A}^n(t)} \bone(\tau^n_i + \xi_i \le t) - \sqrt{n} \lambda \int_0^t G(t-s) \bar{S}^n(s) \bar{I}^n(s) ds.\non
\end{align}

 We have the following joint convergence for the initial quantities similar to Lemma \ref{lem-hatInitials-conv-SIR} for the SIR model. Its proof is omitted for brevity. 
 \begin{lemma}\label{lem-hatInitials-conv-SEIR}
 Under Assumption \ref{AS-SEIR-2}, 
  \begin{align}
&  \Big(\hat{E}^n(0) G_0^c(\cdot),  \hat{E}^n_0, \hat{E}^n(0) G_0(\cdot),  \hat{L}^n_0,  \hat{I}^n(0) F^c_0(\cdot), \hat{E}^n(0) \Psi_0(\cdot),    \hat{I}^n_{0,1}, \hat{I}^n_{0,2},  \hat{I}^n(0) F_0(\cdot), \hat{E}^n(0) \Phi_0(\cdot), 
 \hat{R}^n_{0,1}, \hat{R}^n_{0,2} \Big) \non\\
 & 
 \RA
 \Big(\hat{E}(0) G_0^c(\cdot),  \hat{E}_0, \hat{E}(0) G_0(\cdot),  \hat{L}_0,  \hat{I}(0) F^c_0(\cdot), \hat{E}(0)\Psi_0(\cdot),  \hat{I}_{0,1}, \hat{I}_{0,2},  \hat{I}(0) F_0(\cdot), \hat{E}(0)\Phi_0(\cdot), 
 \hat{R}_{0,1}, \hat{R}_{0,2} \Big) \non 
 \end{align}
 in $D^{12}$ as $n\to\infty$, 
 where the limit processes $ \hat{E}_0$,  $\hat{I}_{0,1}$, $\hat{I}_{0,2}$,  $\hat{R}_{0,1}$ and $\hat{R}_{0,2}$ are given in Theorem \ref{thm-FCLT-SIR}, and $\hat{L}_0$ is  a  mean-zero Gaussian process with the covariance function
$$
\Cov(\hat{L}_0(t), \hat{L}_0(s)) = \bar{E}(0) (G_0(t\wedge s) - G_0(t) G_0(s)), \quad t, s \ge 0.
$$
In addition,
\begin{align}
\Cov( \hat{E}_{0}(t),  \hat{L}_{0}(t')) &= \bar{I}(0)\Big( (G_0(t') -F_0(t)) \bone(t'\ge t) - G_0^c(t) G_0(t') \Big), \non\\
\Cov( \hat{L}_{0}(t),  \hat{I}_{0,2}(t')) &= \bar{E}(0) \left( \int_t^{t'} \bone(t'\ge t) F_0(t'-s|s)dG_0(s) - G_0(t) \Psi_0(t') \right),  \non\\
\Cov( \hat{L}_{0}(t),  \hat{R}_{0,2}(t')) &= \bar{E}(0) \left(\int_t^{t'}F_0(t'-s|s) d G_0(s) - G_0(t)  \Phi_0(t')\right), \non
\end{align}
and $\hat{L}_0$ is independent with the other limit processes of the initial quantities. 
 If $G_0$ and $F_0$ are continuous, then these processes are continuous. 
 \end{lemma}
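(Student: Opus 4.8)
\emph{Overall plan.} The plan is to follow the proof of Lemma~\ref{lem-hatInitials-conv-SIR} line for line, indicating only the points specific to the SEIR setting. The first step is to replace every process built from a random number of i.i.d.\ summands by the analogue with $n\bar E(0)$ in place of $n\bar E^n(0)$ and $n\bar I(0)$ in place of $n\bar I^n(0)$, denoting these $\widetilde E_0^n,\widetilde L_0^n,\widetilde I_{0,1}^n,\widetilde I_{0,2}^n,\widetilde R_{0,1}^n,\widetilde R_{0,2}^n$. Exactly as in the SIR case, each difference (e.g.\ $\widetilde E_0^n-\hat E_0^n$) is bounded in absolute value by $|\hat E^n(0)|$ plus an increasing process whose increments have second moments controlled by $\E[|\bar E^n(0)-\bar E(0)|]$ times an increment of the relevant c.d.f.; combining Assumption~\ref{AS-SEIR-2} (which gives $\sup_n\E[\hat E^n(0)^2]<\infty$, $\sup_n\E[\hat I^n(0)^2]<\infty$, $\bar E^n(0)\to\bar E(0)$, $\bar I^n(0)\to\bar I(0)$) with the tightness criterion for monotone processes (the Corollary on page~83 of \cite{billingsley1999convergence}) shows that each such difference vanishes in $D$. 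Since $(E^n(0),I^n(0))$ is independent of the families $\{\eta^0_j\}$ and $\{(\xi^0_j,\eta_j)\}$, the tilde processes are independent of $(\hat I^n(0),\hat E^n(0))$, so the deterministically rescaled terms $\hat I^n(0)F_0^c(\cdot),\hat E^n(0)G_0(\cdot),\dots$ attach to the limit via $(\hat I^n(0),\hat E^n(0))\RA(\hat I(0),\hat E(0))$ and the mapping theorem; it therefore suffices to prove joint convergence of the tilde processes in $D^6$, the remaining six of the twelve coordinates being these deterministic rescalings.

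\emph{Joint convergence of the tilde processes.} The key observation is that they are linear images of a few \emph{monotone} empirical processes. Using $\bone(\xi^0\le t)\bone(\xi^0+\eta>t)=\bone(\xi^0\le t)-\bone(\xi^0+\eta\le t)$ and $\Psi_0=G_0-\Phi_0$ one gets $\widetilde I_{0,2}^n=\widetilde L_0^n-\widetilde R_{0,2}^n$, while $\bone(\xi^0>t)-G_0^c(t)=-(\bone(\xi^0\le t)-G_0(t))$ gives $\widetilde E_0^n=-\widetilde L_0^n$ and similarly $\widetilde I_{0,1}^n=-\widetilde R_{0,1}^n$. Hence everything is built from (i) the centered empirical c.d.f.\ of the i.i.d.\ sample $\{\eta^0_j\}$ (marginal $F_0$) and (ii) the two-time-parameter centered empirical process of the i.i.d.\ sample $\{(\xi^0_j,\xi^0_j+\eta_j)\}$, whose coordinates have marginals $G_0$ and $\Phi_0$. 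Each coordinate of these is monotone in its time argument, so tightness in $D$ and $D^2$ follows coordinatewise from the page~83 criterion of \cite{billingsley1999convergence}, and convergence of finite-dimensional distributions is the multivariate central limit theorem; this is just the functional CLT for i.i.d.\ elements of $D$, cf.\ \cite[Theorem~14.3]{billingsley1999convergence}. As $\{\eta^0_j\}$ and $\{(\xi^0_j,\eta_j)\}$ are independent, the limits in (i) and (ii) are independent, and the continuous mapping theorem applied to the linear maps above yields the joint convergence of all tilde processes together with the independence assertions of the statement (in particular $(\hat I_{0,1},\hat R_{0,1})$ is independent of the block coming from the initially exposed individuals, and $\hat L_0=-\hat E_0$).

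\emph{Covariances, continuity, and the main difficulty.} The covariance functions are then obtained as $\lim_n\Cov$ of the $n$-indexed versions, each of which is $n^{-1}$ times a single covariance under $H_0$ (for the exposed block) or under $F_0$ (for the infected block), evaluated with the aid of \eqref{Phi-0-def}--\eqref{Psi-0-def}; the covariances involving $\hat L_0$ are immediate from $\hat L_0=-\hat E_0$. Continuity of all limits when $G_0$ and $F_0$ are continuous is read off from the fact that the limiting (monotone Gaussian) processes have covariance functions that are jointly continuous exactly in that case, hence have no fixed discontinuities. The step demanding care---more bookkeeping than genuine obstacle---is the non-monotone summand $\bone(\xi^0\le t)\bone(\xi^0+\eta>t)$ in $\hat I_{0,2}^n$: it is handled by the reduction $\widetilde I_{0,2}^n=\widetilde L_0^n-\widetilde R_{0,2}^n$, after which one must check explicitly that the induced cross-covariances among $\widetilde E_0^n,\widetilde L_0^n,\widetilde I_{0,2}^n,\widetilde R_{0,2}^n$---which all live on the single sample $\{(\xi^0_j,\eta_j)\}$ and are therefore far from independent---match the stated formulas.
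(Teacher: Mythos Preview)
Your proposal is correct and follows essentially the same route as the paper, which omits the proof here as being analogous to Lemma~\ref{lem-hatInitials-conv-SIR}. In particular, your key reduction $\widetilde{I}^n_{0,2}=\widetilde{L}^n_0-\widetilde{R}^n_{0,2}$ (and the companion identities $\widetilde{E}^n_0=-\widetilde{L}^n_0$, $\widetilde{I}^n_{0,1}=-\widetilde{R}^n_{0,1}$) is exactly the decomposition the paper invokes in the proof of Lemma~\ref{lem-hatSn-t-2-bound-SEIR}, so your handling of the non-monotone summand in $\hat{I}^n_{0,2}$ is the intended one.
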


Recall the definition of PRM $M(ds,dy,dz,du)$ and its compensated PRM in Definition \ref{def-PRM-SEIR-1}. 
\begin{definition} \label{def-PRM-SEIR-2}
 Let $M_1(ds,dy,dz,du)$ be a PRM on $[0,T]\times \RR_+ \times \RR_+ \times  \RR_+$  with  mean measure $\tilde\nu(ds,dy,dz,du) = ds \tilde{H}_s(dy,dz) du $ such that the first marginal of $\tilde{H}_s$ is $\tilde{G}_s((a,b]) = G((a+s, b+s])$ and the conditional distribution 
 $\tilde{F}_{s}((a,b]|y) = F((a+s+y, b+s+y]|y)$. 
 Denote the compensated PRM by $\widetilde{M}(ds,dy, dz,du)$. 
 \end{definition}
 
We use again the notation  $\varphi_n(s,u) = \bone\left(u \le n \lambda \bar{S}^n(s^-)\bar{I}^n(s^-)\right)$. 
 We can rewrite 
 \begin{align*}
   \hat{I}^n_1(t) 
  &= \frac{1}{\sqrt{n}} \int_0^t \int_0^{t-s} \int_{t-s-y}^\infty  \int_0^\infty  \varphi_n(s,u)  \overline{M}(ds,dy,dz,du)  \non\\
  & = \frac{1}{\sqrt{n}} \int_0^t \int_0^{t} \int_t^\infty  \int_0^\infty  \varphi_n(s,u)  \widetilde{M}(ds,dy, dz,du), 
  \end{align*}
  and similarly for the other processes  $\hat{M}^n_A$, $\hat{E}^n_1$, $\hat{L}^n_1$,
 and $\hat{R}^n_1$ (with $\widetilde{M}$ for brevity) as 
 \begin{align} 
 \hat{M}^n_A(t)
 & = \frac{1}{\sqrt{n}} \int_0^t \int_{0}^{\infty} \int_0^\infty  \int_0^\infty  \varphi_n(s,u)  \widetilde{M}(ds,dy, dz,du),  \non\\
  \hat{E}^n_1(t) 
  &= \frac{1}{\sqrt{n}} \int_0^t \int_{t}^{\infty} \int_0^\infty  \int_0^\infty  \varphi_n(s,u)  \widetilde{M}(ds,dy, dz,du),  \non\\
    \hat{L}^n_1(t) 
    & = \frac{1}{\sqrt{n}} \int_0^t \int_0^{t} \int_0^\infty  \int_0^\infty  \varphi_n(s,u)  \widetilde{M}(ds,dy, dz,du),  \non\\
  \hat{R}^n_1(t) 
  &= \frac{1}{\sqrt{n}} \int_0^t \int_0^{t} \int_0^t  \int_0^\infty  \varphi_n(s,u)  \widetilde{M}(ds,dy, dz,du). \non
 \end{align}
 
Observe that 
 \begin{equation}\label{eqn-hatMEL}
  \hat{M}^n_A(t) =   \hat{E}^n_1(t)  +  \hat{L}^n_1(t), \quad t \ge 0,
 \end{equation}
 and
 \begin{equation} \label{eqn-hatEIR}
  \hat{L}^n_1(t)  =  \hat{I}^n_1(t)  +  \hat{R}^n_1(t), \quad t \ge 0. 
 \end{equation}
 We define the auxiliary processes $\widetilde{M}^n_A$, $\widetilde{E}^n_1$, $\widetilde{L}^n_1$,
 $\widetilde{I}^n_1$ and $\widetilde{R}^n_1$ by replacing $\varphi_n(s,u)$ by
  $$
 \widetilde\varphi_n(s,u) = \bone\left(u \le n \lambda \bar{S}(s)\bar{I}(s)\right), 
 $$
 in the corresponding processes using the compensated PRM $\widetilde{M}(ds,dy, dz,du)$. Then we have
 \begin{equation}\label{eqn-tildeMEL}
  \widetilde{M}^n_A(t) =   \widetilde{E}^n_1(t)  +  \widetilde{L}^n_1(t), \quad t \ge 0,
 \end{equation}
 and
 \begin{equation}\label{eqn-tildeEIR}
  \widetilde{L}^n_1(t)  =  \widetilde{I}^n_1(t)  +  \widetilde{R}^n_1(t), \quad t \ge 0. 
 \end{equation}

Similar to Lemma \ref{lem-hatSn-t-2-bound} for the SIR model, we have the following result. We omit its proof for brevity. 
\begin{lemma} \label{lem-hatSn-t-2-bound-SEIR}
\begin{align*}
 \sup_n \E\bigg[\sup_{t \in [0,T]}|\hat{S}^n(t)|^2\bigg] <\infty&, \quad \sup_n \E\bigg[\sup_{t \in [0,T]}|\hat{E}^n(t)|^2\bigg] <\infty, \\  \sup_n \E\bigg[ \sup_{t \in [0,T]}|\hat{I}^n(t)|^2\bigg] <\infty&, \quad   \sup_n \E\bigg[ \sup_{t \in [0,T]}|\hat{R}^n(t)|^2\bigg] <\infty. 
\end{align*}
\end{lemma}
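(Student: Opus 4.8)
The plan is to follow the two-step scheme used for the SIR model in the proof of Lemma~\ref{lem-hatSn-t-2-bound}. In \textbf{Step 1} I would establish the weaker bounds with the supremum \emph{inside} the expectation, i.e. $\sup_n\sup_{t\in[0,T]}\E[(\hat S^n(t))^2]<\infty$ and similarly for $\hat E^n,\hat I^n,\hat R^n$, and in \textbf{Step 2} I would upgrade these to the stated estimates with the supremum outside. The structural fact I would exploit is that $(\hat S^n,\hat I^n)$ solves a closed two-dimensional linear Volterra system with bounded coefficients: from \eqref{hatSn-rep-SIR} and \eqref{hatIn-SEIR-rep}, each of $\hat S^n(t)$, $\hat I^n(t)$ is a \emph{driving term} — a sum of $\hat M_A^n$, the initial variables $\hat I^n(0),\hat E^n(0)$ multiplied by bounded deterministic functions, the initial-data processes $\hat I^n_{0,1},\hat I^n_{0,2}$, and the new-infection process $\hat I^n_1$ — plus $\lambda\int_0^t k(t-s)(\hat S^n(s)\bar I^n(s)+\bar S(s)\hat I^n(s))\,ds$ with a kernel $k\in\{1,\Psi\}$ taking values in $[0,1]$; and $\hat E^n$, $\hat R^n$ are given \emph{explicitly} in terms of $(\hat S^n,\hat I^n)$ (plus analogous driving terms with kernels $G^c$, $\Phi$) by the representations displayed just before Lemma~\ref{lem-hatSn-t-2-bound-SEIR}, so bounds for $(\hat S^n,\hat I^n)$ immediately yield bounds for $\hat E^n,\hat R^n$.

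For \textbf{Step 1} I would bound $\sup_t\E[(\cdot(t))^2]$ for every driving term: $\E[(\hat M_A^n(t))^2]=\E[\bar\Lambda^n(t)]\le\lambda T$ by \eqref{hatMn-QV} and \eqref{barLambda-n-Lip}; $\sup_n\E[\hat I^n(0)^2]<\infty$ and $\sup_n\E[\hat E^n(0)^2]<\infty$ by Assumption~\ref{AS-SEIR-2}; the variances of $\hat E^n_0,\hat L^n_0,\hat I^n_{0,1},\hat I^n_{0,2},\hat R^n_{0,1},\hat R^n_{0,2}$ are the bounded expressions recorded in Lemma~\ref{lem-hatInitials-conv-SEIR} (all $\le1$ up to constants, since they are variances of indicator functionals); and the second moments of $\hat E^n_1,\hat L^n_1,\hat I^n_1,\hat R^n_1$ equal $\lambda\int_0^t k(t-s)\bar S^n(s)\bar I^n(s)\,ds\le\lambda T$, either from the orthogonality relations $\E[\kappa^n_i\kappa^n_j\mid\sF^n_t]=0$ for $i\ne j$ (used in \eqref{Vn-t-conv-SEIR}) combined with the fact that $\int_0^\cdot k(\cdot-s)\,d\hat M_A^n(s)$ is a mean-zero stochastic integral, or directly from the PRM representation using $\int_0^\infty\varphi_n(s,u)\,du=n\lambda\bar S^n(s)\bar I^n(s)$. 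Squaring the representations of $\hat S^n$ and $\hat I^n$, applying Cauchy–Schwarz to the Volterra integrals, using $\bar I^n,\bar S,\Psi\le1$, adding the two, and invoking Gronwall's inequality then gives $\sup_n\sup_{t\in[0,T]}(\E[(\hat S^n(t))^2]+\E[(\hat I^n(t))^2])<\infty$, and hence the analogous bounds for $\hat E^n,\hat R^n$.

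For \textbf{Step 2} I would control the driving terms as follows. The $\sF^n_t$-martingale $\hat M_A^n$ is handled by Doob's $L^2$ inequality together with the Step~1 bound on $\E[(\hat M_A^n(T))^2]$. For each new-infection process I use its decomposition into the deterministic-intensity tilde-analogue plus a difference: as in the SIR case, the ``cumulative'' tilde processes $\widetilde M^n_A,\widetilde L^n_1,\widetilde R^n_1$ are martingales, with respect to the filtrations generated by $\widetilde M$ on the relevant increasing domains (defined exactly as $\sG^A_t,\sG^R_t$ in the SIR section), with predictable quadratic variations bounded by $\lambda T$; Doob's inequality gives $\sup_n\E[\sup_{t\le T}(\widetilde M^n_A(t))^2]<\infty$ and likewise for $\widetilde L^n_1,\widetilde R^n_1$, and hence for $\widetilde E^n_1=\widetilde M^n_A-\widetilde L^n_1$ and $\widetilde I^n_1=\widetilde L^n_1-\widetilde R^n_1$ by \eqref{eqn-tildeMEL}–\eqref{eqn-tildeEIR}. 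The differences $\hat E^n_1-\widetilde E^n_1$, $\hat L^n_1-\widetilde L^n_1$, $\hat I^n_1-\widetilde I^n_1$, $\hat R^n_1-\widetilde R^n_1$ are treated by the device of the proof of Lemma~\ref{lem-hat-tilde-diff-SIR}: write the difference as an integral against $\widetilde M$ whose integrand $\rho_n$ has $u$-marginal $\le\lambda(|\hat S^n(s^-)|+|\hat I^n(s^-)|)$, dominate the absolute value of the compensated integral by the integral against the PRM plus the integral against the mean measure, enlarge the inner $(y,z)$-domains to their full ranges, and bound the second moment of the supremum over $t$ by an expectation of an integral over $[0,T]$ of $(|\hat S^n(s)|+|\hat I^n(s)|)^2$, which is finite by Step~1. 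Finally, the initial-data processes reduce to empirical distribution functions: using $\bone(\xi^0_j\le t)\bone(\xi^0_j+\eta_j>t)=\bone(\xi^0_j\le t)-\bone(\xi^0_j+\eta_j\le t)$ and $\bone(\eta^0_j>t)=1-\bone(\eta^0_j\le t)$, one has $\hat E^n_0=-\hat L^n_0$, $\hat I^n_{0,1}=-\hat R^n_{0,1}$ and $\hat I^n_{0,2}=\hat L^n_0-\hat R^n_{0,2}$, so everything is built from the centered empirical c.d.f.'s of the i.i.d.\ samples $\{\eta^0_j\}$, $\{\xi^0_j\}$ and $\{\xi^0_j+\eta_j\}$, each monotone in $t$. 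Splitting off the random index $n\bar E^n(0)$ (resp.\ $n\bar I^n(0)$) from the deterministic $n\bar E(0)$ (resp.\ $n\bar I(0)$), the difference has supremum $\le|\hat E^n(0)|$ (resp.\ $\le|\hat I^n(0)|$), with second moment bounded by Assumption~\ref{AS-SEIR-2}, while the deterministic-index pieces are controlled by the Dvoretzky–Kiefer–Wolfowitz inequality with Massart's constant~\cite{Massart90}, exactly as in Step~2 of the proof of Lemma~\ref{lem-hatSn-t-2-bound}. Collecting these $\E[\sup_t\cdot^2]$-bounds on the driving terms and applying Gronwall's inequality to the representations of $\hat S^n$ and $\hat I^n$ with the supremum inside (with a routine localization if desired) yields the stated estimates for $\hat S^n,\hat I^n$, and the explicit representations then deliver those for $\hat E^n$ and $\hat R^n$.

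\emph{Main obstacle.} The argument is a faithful, if longer, adaptation of the SIR proof; the genuinely new ingredient is the bivariate, possibly dependent, pair $(\xi_i,\eta_i)$ and the larger number of building blocks. The two points needing (mild) care are: the $\E[\sup_t]$ bound for the \emph{non-martingale} processes $\hat I^n_1$ and $\hat E^n_1$, resolved by passing through the martingale backbone $\widetilde M^n_A,\widetilde L^n_1,\widetilde R^n_1$ together with the PRM-difference estimate; and the treatment of $\hat I^n_{0,2}$ and $\hat R^n_{0,2}$ despite the $\xi$–$\eta$ dependence, resolved by rewriting them through the monotone empirical distribution functions of $\xi^0_j$ and $\xi^0_j+\eta_j$ so that the Dvoretzky–Kiefer–Wolfowitz inequality still applies. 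Everything else is routine bookkeeping.
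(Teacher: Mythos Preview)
Your proposal is correct and follows essentially the same approach as the paper's own proof: the two-step scheme from Lemma~\ref{lem-hatSn-t-2-bound}, the martingale backbone $\widetilde{L}^n_1,\widetilde{R}^n_1$ (so that $\widetilde{I}^n_1=\widetilde{L}^n_1-\widetilde{R}^n_1$) together with the PRM-difference estimate for $\hat{I}^n_1$, and the rewriting of $\hat{I}^n_{0,2}$ as a difference of two centered empirical c.d.f.'s in $\xi^0_j$ and $\xi^0_j+\eta_j$ to which DKW applies after splitting off the random index. The paper's proof is merely a terser version of what you wrote.
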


\begin{proof}
The proof for the processes $\hat{S}^n$ and $\hat{E}^n$ follows from the same argument as those of $\hat{S}^n$ and $\hat{I}^n$ in the SIR model. By the representation of $\hat{I}^n$ in \eqref{hatIn-SEIR-rep}, we prove the upper bounds for the processes $\hat{I}^n_{0,1}$, $\hat{I}^n_{0,2}$, and $\hat{I}^n_{1}$, and then apply Gronwall's inequality after  taking the expectation of the square of the equation and using the Cauchy--Schwartz inequality. 
The same arguments for $\hat{I}^n_{0}$ and $\hat{I}^n_{1}$ in the SIR model can be used for the process $\hat{I}^n_{0,1}$ and 
$\hat{I}^n_{1}$, respectively, where we use  the difference $    \widetilde{I}^n_1(t)  =\widetilde{L}^n_1(t)  -  \widetilde{R}^n_1(t)$ 
as shown in \eqref{eqn-tildeEIR} with both $\widetilde{L}^n_1(t)$ and $ \widetilde{R}^n_1(t)$ being martingales. 
Now for the process  $\hat{I}^n_{0,2}$, 
we define 
$$
    \tilde{I}^n_{0,2}(t) = \frac{1}{\sqrt{n}}  \sum_{j=1}^{n \bar{E}(0)} \left(\bone (\xi_j^0 \le t) \bone(\xi_j^0+ \eta_j >t ) -\Psi_0(t) \right). 
$$
We can rewrite $  \tilde{I}^n_{0,2}(t) $ as
\begin{align*}
  \tilde{I}^n_{0,2}(t) =  \frac{1}{\sqrt{n}}  \sum_{j=1}^{n \bar{E}(0)} \left(\bone (\xi_j^0 \le t)  -G_0(t) \right) -  \frac{1}{\sqrt{n}}  \sum_{j=1}^{n \bar{E}(0)} \left( \bone(\xi_j^0+ \eta_j \le t) -\Phi_0(t) \right). 
\end{align*}
Then each term can be treated in the same way as $\tilde{I}^n_{0}$ in the proof of Lemma \ref{lem-hatSn-t-2-bound}, using the Dvoretsky--Kiefer--Wolfowitz inequality. 
The difference $\hat{I}^n_{0,2}(t)  - \tilde{I}^n_{0,2}(t) $ can be also expressed as two terms similarly as the above expression, involving $\bar{E}^n(0)$ and $\bar{E}(0)$, and then each term 
 can be treated similarly as   $\hat{I}^n_{0} - \tilde{I}^n_{0}$ in the SIR model in  the proof of Lemma \ref{lem-hatSn-t-2-bound}.  Thus we obtain the result for $\hat{I}^n(t)$. The process $\hat{R}^n(t)$ can be treated analogously.  
\end{proof}

Then, following an analogous argument as in the proof of Lemma \ref{lem-hat-tilde-diff-SIR}, we obtain the following. 

\begin{lemma} \label{lem-hat-tilde-diff-SEIR}
Under Assumption \ref{AS-SEIR-2}, 
\begin{align*}
(\hat{M}^n_A - \widetilde{M}^n_A, \hat{E}_1^n - \widetilde{E}_1^n, \hat{L}_1^n - \widetilde{L}_1^n, \hat{I}_1^n - \widetilde{I}_1^n, \hat{R}_1^n - \widetilde{R}_1^n) \RA 0 \qinq D^5 \qasq n \to \infty. 
\end{align*}
\end{lemma}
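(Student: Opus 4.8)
The plan is to follow almost verbatim the proof of Lemma~\ref{lem-hat-tilde-diff-SIR}, now for five processes instead of three. First, by the identities \eqref{eqn-hatMEL}--\eqref{eqn-tildeEIR} the five differences are linearly dependent: $\hat{L}_1^n-\widetilde{L}_1^n=(\hat{I}_1^n-\widetilde{I}_1^n)+(\hat{R}_1^n-\widetilde{R}_1^n)$ and $\hat{M}_A^n-\widetilde{M}_A^n=(\hat{E}_1^n-\widetilde{E}_1^n)+(\hat{I}_1^n-\widetilde{I}_1^n)+(\hat{R}_1^n-\widetilde{R}_1^n)$. Hence it suffices to prove $\hat{E}_1^n-\widetilde{E}_1^n\RA0$, $\hat{I}_1^n-\widetilde{I}_1^n\RA0$ and $\hat{R}_1^n-\widetilde{R}_1^n\RA0$ in $D$; since the common limit is the deterministic zero function, coordinatewise convergence automatically upgrades to joint convergence to $0$ in $D^5$.

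Next, fix one coordinate, say $\hat{R}_1^n-\widetilde{R}_1^n$. From the PRM representations it equals $n^{-1/2}\int(\varphi_n(s,u)-\widetilde\varphi_n(s,u))\,\widetilde{M}(ds,dy,dz,du)$ over the relevant region, the integrand being supported, in the height variable $u$, on the band between $n\lambda(\bar{S}^n(s^-)\bar{I}^n(s^-)\wedge\bar{S}(s)\bar{I}(s))$ and $n\lambda(\bar{S}^n(s^-)\bar{I}^n(s^-)\vee\bar{S}(s)\bar{I}(s))$. For each fixed $t$ it has mean $0$, and a computation of its predictable quadratic variation gives
$$
\E\big[(\hat{R}_1^n(t)-\widetilde{R}_1^n(t))^2\big]=\lambda\int_0^t\Phi(t-s)\,\E\big[|\bar{S}^n(s)\bar{I}^n(s)-\bar{S}(s)\bar{I}(s)|\big]\,ds\longrightarrow0\qasq n\to\infty,
$$
by the FLLN Theorem~\ref{thm-FLLN-SEIR} together with dominated convergence; the same one-dimensional estimate applies to $\hat{E}_1^n-\widetilde{E}_1^n$ and $\hat{I}_1^n-\widetilde{I}_1^n$ with $\Phi$ replaced by $G^c$ and $\Psi$, respectively.

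It then remains to prove tightness of $\{\hat{R}_1^n-\widetilde{R}_1^n\}$ (and likewise of the other two). As for $\widetilde{\Xi}^n$ in the SIR proof, decompose $\hat{R}_1^n-\widetilde{R}_1^n=\widetilde\Xi_1^n-\widetilde\Xi_2^n$, where $\widetilde\Xi_1^n$ is the integral against $M_1$ over the height band carrying a factor $\text{sign}(\bar{S}^n\bar{I}^n-\bar{S}\bar{I})$, and $\widetilde\Xi_2^n(t)=\lambda\sqrt{n}\int_0^t\Phi(t-s)(\bar{S}^n(s)\bar{I}^n(s)-\bar{S}(s)\bar{I}(s))\,ds$ is its compensator; each of $\widetilde\Xi_1^n,\widetilde\Xi_2^n$ is a difference of two processes nondecreasing in $t$ whose absolute increments are dominated by those of the nondecreasing processes
\begin{align*}
\Xi_1^n(t)&=\frac{1}{\sqrt{n}}\int_0^t\int_0^t\int_0^t\int_{n\lambda(\bar{S}^n(s^-)\bar{I}^n(s^-)\wedge\bar{S}(s)\bar{I}(s))}^{n\lambda(\bar{S}^n(s^-)\bar{I}^n(s^-)\vee\bar{S}(s)\bar{I}(s))}M_1(ds,dy,dz,du),\\
\Xi_2^n(t)&=\lambda\sqrt{n}\int_0^t\Phi(t-s)\,\big|\bar{S}^n(s)\bar{I}^n(s)-\bar{S}(s)\bar{I}(s)\big|\,ds.
\end{align*}
By the corollary on page~83 of \cite{billingsley1999convergence} (used as in the proof of Lemma~\ref{lem-Vn-SIR-conv}), it suffices to show for $i=1,2$ and each $\ep>0$ that $\limsup_{n\to\infty}\tfrac{1}{\delta}\P(|\Xi_i^n(t+\delta)-\Xi_i^n(t)|\ge\ep)\to0$ as $\delta\to0$. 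For $\Xi_2^n$ one splits the increment into a part over $(t,t+\delta]$ and a part over $(0,t]$ with kernel increment $\Phi(t+\delta-s)-\Phi(t-s)$, bounds $\sqrt{n}|\bar{S}^n\bar{I}^n-\bar{S}\bar{I}|\le|\hat{S}^n|+|\hat{I}^n|$ as in \eqref{eqn-Delta-n}, uses $\sup_{[0,T]}\E[(|\hat{S}^n|+|\hat{I}^n|)^2]<\infty$ from Lemma~\ref{lem-hatSn-t-2-bound-SEIR}, and the elementary kernel estimate $\int_0^t(\Phi(t+\delta-s)-\Phi(t-s))\,ds\le C\delta$ already established in Section~\ref{sec-SEIR-FLLN-proof} (cf.\ \eqref{Vn-conv-SEIR-p4} and \eqref{Vn-conv-SEIR-p6}); this yields an $O(\delta^2)$ bound on the second moment. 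For $\Xi_1^n$ one adds and subtracts the compensator, so that the increment becomes a compensated-PRM integral over a region of mean measure $O(\delta)\cdot\int_0^t(|\hat{S}^n(s)|+|\hat{I}^n(s)|)\,ds$ plus a term of the same form as $\Xi_2^n$; squaring and again invoking Lemma~\ref{lem-hatSn-t-2-bound-SEIR} gives the required $O(\delta^2)$ bound. This establishes both criteria, hence tightness, and completes the proof.

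The step I expect to be the main obstacle is not conceptually new: it is the bookkeeping of these tightness estimates in the presence of the extra exposure-period coordinate $y$ and the conditional c.d.f.\ $F(\cdot\,|\,y)$, so that the kernels $\Psi,\Phi$ and the ``strip'' integration regions are three-dimensional rather than two-dimensional. One must verify that the monotonicity in $t$ and the $\le C\delta$ kernel bounds survive this; they do, and the necessary kernel inequalities were in fact already carried out in the SEIR FLLN proof (see \eqref{Vn-conv-SEIR-p4} and \eqref{Vn-conv-SEIR-p6}), so this lemma requires nothing beyond the SIR scheme together with the second-moment bounds of Lemma~\ref{lem-hatSn-t-2-bound-SEIR}.
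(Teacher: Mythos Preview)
Your overall strategy is correct and matches the paper's, but there is a genuine issue in your choice of which three coordinates to prove directly. You reduce to proving $\hat{E}_1^n-\widetilde{E}_1^n\RA0$, $\hat{I}_1^n-\widetilde{I}_1^n\RA0$, $\hat{R}_1^n-\widetilde{R}_1^n\RA0$ and then say ``likewise'' for $E$ and $I$ after spelling out $R$. The detailed tightness argument you give for $R$ relies on the dominating processes $\Xi_1^n,\Xi_2^n$ being nondecreasing in $t$; this holds for $R$ because the integration region $[0,t]\times[0,t]\times[0,t]$ is nested increasing and the kernel $\Phi$ is nondecreasing. For $E$ the region is $[0,t]\times[t,\infty)\times[0,\infty)$ and the kernel is $G^c$; for $I$ the region is $[0,t]\times[0,t]\times[t,\infty)$ and the kernel is $\Psi=G-\Phi$, which is not monotone. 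In both cases the analogues of $\Xi_1^n$ and $\Xi_2^n$ are \emph{not} nondecreasing in $t$, so Billingsley's increasing-process criterion does not apply as stated, and your ``likewise'' does not go through.

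The paper fixes exactly this by choosing a different triple: it proves $\hat{M}_A^n-\widetilde{M}_A^n\RA0$, $\hat{L}_1^n-\widetilde{L}_1^n\RA0$, and $\hat{R}_1^n-\widetilde{R}_1^n\RA0$ directly (kernels $1$, $G$, $\Phi$ and regions $[0,t]\times\RR_+^2$, $[0,t]^2\times\RR_+$, $[0,t]^3$, all monotone), and then \emph{deduces} the $E$ and $I$ statements from the linear relations \eqref{eqn-hatMEL}--\eqref{eqn-tildeEIR}. That organizational choice is the missing idea; once you make it, everything you wrote for the $R$ coordinate (including the appeal to Lemma~\ref{lem-hatSn-t-2-bound-SEIR} and the kernel estimate for $\Phi(t+\delta-s)-\Phi(t-s)$ via the decomposition used around \eqref{Vn-conv-SEIR-p4}--\eqref{Vn-conv-SEIR-p6}) goes through verbatim, and the $M_A$ and $L$ cases are literally the SIR argument with $F$ replaced by $1$ and $G$.
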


\begin{proof}
By the same argument as in the proof for the SIR model, we obtain the convergence  
$\hat{M}^n_A - \widetilde{M}^n_A \RA 0$, and $ \hat{L}^n_1 - \widetilde{L}^n_1\RA 0$, and thus, by \eqref{eqn-hatMEL} and \eqref{eqn-tildeMEL}, we have 
 $ \hat{E}^n_1 - \widetilde{E}^n_1\RA 0$. 
 We then show that  $ \hat{R}^n_1 - \widetilde{R}^n_1\RA 0$, which will imply 
  $ \hat{I}^n_1 - \widetilde{I}^n_1\RA 0$ by \eqref{eqn-hatEIR} and \eqref{eqn-tildeEIR}. 
 On the other hand, the proof of $ \hat{R}^n_1 - \widetilde{R}^n_1\RA 0$ follows essentially the same argument as that in the SIR model, if we replace the infectious periods by the sum of the exposing and infectious periods. In the analysis we simply replace the distribution function $F$ by the convolution of $F$ and $G$. In particular,  the difference process $\Xi^n= \hat{R}_1^n - \widetilde{R}_1^n$, has
$
\E[\Xi_1^n(t)]  = 0,
$
and 
$$
\E\left[\Xi^n(t)^2\right] =  \int_0^t 
 \Phi(t-s) \E\left[  |\bar{S}^n(s) \bar{I}^n(s) -  \bar{S}(s) \bar{I}(s)| \right] ds,$$
for each $t\ge 0$. 
To show that the sequence $\{\Xi^n: n \ge 1\}$ is tight,  as in the proof of the SIR model, 
it suffices to show the tightness of the processes $ \Xi_1^n(t) $ and $ \Xi_2^n(t) $: 
\begin{align}
 \Xi_1^n(t) &=  \frac{1}{\sqrt{n}} \int_0^t \int_{0}^{t} \int_0^t \int_{n\lambda ( \bar{S}^n(s^-) \bar{I}^n(s^-) \wedge \bar{S}(s) \bar{I}(s))}^{n\lambda ( \bar{S}^n(s^-) \bar{I}^n(s^-) \vee \bar{S}(s) \bar{I}(s))}  M_1(ds,dy,dz,du),  \non\\
\Xi_2^n(t) &= \lambda \sqrt{n}
\int_0^t   \Phi(t-s) 
 \big|\bar{S}^n(s) \bar{I}^n(s) - \bar{S}(s) \bar{I}(s)\big| ds. \non  
\end{align}
It suffices to show that \eqref{Xi-n-tight-p} holds for each process. 
Both processes $ \Xi_1^n(t) $ and $ \Xi_2^n(t) $ are increasing in $t$. 
The proof then follows step by step and it requires the condition:
\begin{equation}\label{3stars}
\limsup_{n\to\infty} \frac{1}{\delta}  \E\left[ \left(\int_0^t 
(\Phi(t+\delta -s) - \Phi(t-s))  \Delta^n(s) ds \right)^2 \right] \to 0  
\end{equation}
as $\delta \to 0$. 
We observe that 
\begin{align}
 & \Phi(t+\delta -s) - \Phi(t-s)  \non\\
 &= \int_0^{t+\delta-s} F(t+\delta-s-u|u)dG(u)  -  \int_0^{t-s} F(t-s-u|u)dG(u)\non\\
& = \int_{t-s}^{t+\delta-s} F(t+\delta-s-u|u)dG(u)  +  \int_0^{t-s} (F(t+\delta-s-u|u)- F(t-s-u|u))dG(u). \non
\end{align}
Thus, we have
\begin{align}
&\E\left[ \left(\int_0^t 
(\Phi(t+\delta -s) - \Phi(t-s))  \Delta^n(s) ds \right)^2 \right] \non\\
& \le 2\E\left[ \left(\int_0^t 
 \int_{t-s}^{t+\delta-s} F(t+\delta-s-u|u)dG(u) \Delta^n(s) ds \right)^2 \right] \non\\
& \quad + 2\E\left[ \left(\int_0^t 
\int_0^{t-s} (F(t+\delta-s-u|u)- F(t-s-u|u))dG(u)  \Delta^n(s) ds \right)^2 \right]. \non 
\end{align}  
The first term can be bounded by
$$
2 \E\left[ \left(\int_0^t  (G(t+\delta-s) -G(t-s))\Delta^n(s) ds \right)^2 \right]
$$
which can be dealt with in the same way as was done for the SIR model. 
Concerning the second term, by interchanging the order of integration and using Jensen's inequality, we have
\begin{align}
& \E\left[ \left(\int_0^t 
\int_0^{t-s} (F(t+\delta-s-u|u)- F(t-s-u|u)) \Delta^n(s) ds dG(u)   \right)^2 \right] \non\\
& \le \E\left[ \int_0^t \left(
\int_0^{t-u} (F(t+\delta-s-u|u)- F(t-s-u|u)) \Delta^n(s) ds \right)^2 dG(u)   \right].  \non
\end{align}
Exploiting Lemma \ref{lem-hatSn-t-2-bound-SEIR}, we can show that this term is at most of the order of  $o(\delta)$ as in the SIR model. This completes the proof.
\end{proof}

Let 
$$\sG^{A}_t := \sigma \left\{\widetilde{M}([0,u]\times\RR_+^3):   0 \le u \le t \right\}, \quad t \ge 0, 
$$
$$\sG^{L}_t := \sigma \left\{\widetilde{M}([0,u]\times[0,u]\times\RR_+^2):   0 \le u \le t \right\}, \quad t \ge 0, 
$$
and 
$$\sG^{R}_t := \sigma \left\{\widetilde{M}([0,u]\times[0,u]\times[0,u]\times\RR_+):   0 \le u \le t \right\}, \quad t \ge 0. 
$$
It is clear that $\widetilde{M}_A^n$ is a $\{\sG^{A,n}_t: t \ge0\}$-martingale with quadratic variation
 $$
\langle\widetilde{M}^n_A  \rangle(t) = \lambda \int_0^t \bar{S}(s) \bar{I}(s) ds, \quad t\ge 0,
$$
 $\widetilde{L}_1^n$ is a $\{\sG^{L,n}_t: t \ge0\}$-martingale with quadratic variation
 $$
\langle\widetilde{L}^n_1  \rangle(t) = \lambda \int_0^t G(t-s) \bar{S}(s) \bar{I}(s) ds, \quad t\ge 0,
$$
and  $\widetilde{R}_1^n$ is a 
 $\{\sG^{R,n}_t: t \ge0\}$-martingale with quadratic variation
$$
\langle\widetilde{R}^n_1  \rangle(t) = \lambda \int_0^t \Phi(t-s) \bar{S}(s) \bar{I}(s) ds, \quad t\ge 0. 
$$
Note that  we do not have a martingale property for $\widetilde{E}^n$ nor $\widetilde{I}^n$, and
like in the SIR model, it is important to observe that the joint process $(\widetilde{M}_A^n, \widetilde{L}_A^n, \widetilde{R}^n_1)$ is not a martingale with respect to a common filtration, and we only use their individual martingale property to conclude their tightness.

\begin{lemma}\label{lem-hatMARI1-conv-SEIR}
Under Assumption \ref{AS-SEIR-2}, 
\begin{equation} 
( \hat{M}_A^n, \hat{E}^n_1, \hat{L}^n_1, \hat{I}^n_1, \hat{R}_1^n) \RA (\hat{M}_1,  \hat{E}_1, \hat{L}_1, \hat{I}^n,\hat{R}_1) \qinq D^5 \qasq n \to\infty, \non
\end{equation}
where $ (\hat{M}_A, \hat{E}_1, \hat{I}_1, \hat{R}_1)$ are given in Theorem~\ref{thm-FCLT-SIR}, and  $\hat{L}_1$ is a continuous Gaussian process with covariance function: for $t, t'\ge 0$, 
$$
\Cov(\hat{L}_1(t), \hat{L}_1(t')) =  \lambda \int_0^{t\wedge t'} G(t\vee t'-s) \bar{S}(s) \bar{I}(s) ds, 
$$
and it has covariance functions with the other processes: for $t, t'\ge 0$, 
\begin{align}
\Cov(\hat{M}_A(t), \hat{L}_1(t')) & = \lambda \int_0^{t\wedge t'} G(t'-s) \bar{S}(s) \bar{I}(s) ds,\non\\
\Cov(\hat{E}_1(t), \hat{L}_1(t')) & = \lambda \int_0^{t\wedge t'}  (G(t'-s) - G(t-s))\bone(t'\ge t)  \bar{S}(s) \bar{I}(s) ds,\non\\
\Cov(\hat{L}_1(t), \hat{I}_1(t')) & = \lambda \int_0^{t\wedge t'}  (G(t-s) - \Psi(t'-s))\bone(t'\ge t)  \bar{S}(s) \bar{I}(s) ds,\non\\
\Cov(\hat{L}_1(t), \hat{I}_R(t')) & = \lambda \int_0^{t\wedge t'}(G(t-s) - \Phi(t'-s))\bone(t'\ge t) \bar{S}(s) \bar{I}(s) ds. \non 
\end{align}

\end{lemma}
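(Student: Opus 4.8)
The plan is to mimic the proof of Lemma~\ref{lem-hatMARI1-conv} for the SIR model. By Lemma~\ref{lem-hat-tilde-diff-SEIR} it suffices to prove the convergence for the tilde--processes, namely that $(\widetilde{M}_A^n, \widetilde{E}_1^n, \widetilde{L}_1^n, \widetilde{I}_1^n, \widetilde{R}_1^n) \RA (\hat{M}_A, \hat{E}_1, \hat{L}_1, \hat{I}_1, \hat{R}_1)$ in $D^5$ as $n\to\infty$, and then transfer the statement to the hat--processes using that lemma.

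First I would establish tightness. The processes $\widetilde{M}_A^n$, $\widetilde{L}_1^n$ and $\widetilde{R}_1^n$ are square--integrable martingales with respect to $\{\sG^A_t\}$, $\{\sG^L_t\}$ and $\{\sG^R_t\}$ respectively, with the explicit predictable quadratic variations displayed above, each dominated by $\lambda t$. Hence, by the FCLT for square--integrable martingales (Theorem~1.4 in Chapter~7 of \cite{ethier-kurtz}), each of these three sequences converges in $D$ to a time--changed Brownian motion and, in particular, is tight. Since $\widetilde{E}_1^n = \widetilde{M}_A^n - \widetilde{L}_1^n$ by \eqref{eqn-tildeMEL} and $\widetilde{I}_1^n = \widetilde{L}_1^n - \widetilde{R}_1^n$ by \eqref{eqn-tildeEIR}, the two remaining sequences are tight as well, and by Lemma~\ref{lem-hat-tilde-diff-SEIR} so are $\{\hat{M}^n_A\}$, $\{\hat{E}^n_1\}$, $\{\hat{L}^n_1\}$, $\{\hat{I}^n_1\}$ and $\{\hat{R}^n_1\}$. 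As in the SIR case, the joint process of these martingales is not a martingale for a single filtration, so the limiting finite--dimensional distributions must be identified by a separate argument.

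Next I would prove convergence of the finite--dimensional distributions of $(\widetilde{M}_A^n, \widetilde{E}_1^n, \widetilde{L}_1^n, \widetilde{I}_1^n, \widetilde{R}_1^n)$ to those of the Gaussian vector defined via the white noise $W_H$. Each increment over a time interval is $n^{-1/2}$ times an integral of $\widetilde\varphi_n$ against $\widetilde{M}(ds,dy,dz,du)$ over a product region in the $(y,z)$--variables: the whole plane for $\hat{M}_A$, the half--plane $\{y\ge t\}$ for $\hat{E}_1$, the half--plane $\{y<t\}$ for $\hat{L}_1$, the region $\{y<t\le y+z\}$ for $\hat{I}_1$, and $\{y<t,\ y+z<t\}$ for $\hat{R}_1$. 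One decomposes $[0,T]\times\RR_+\times\RR_+$ into finitely many disjoint rectangles, taken as the common refinement over all the time points appearing in the finite--dimensional vector, so that each coordinate is, on each rectangle, an integral over that full rectangle (up to sign); by independence of the restrictions of a PRM to disjoint sets the joint characteristic function factorizes over the rectangles, and on each rectangle the characteristic function of $n^{-1/2}\int\widetilde\varphi_n\,d\widetilde{M}$ converges to the appropriate Gaussian factor exactly as in \eqref{eqn-PRM-char}--\eqref{eqn-char-SIR} by a Taylor expansion. Matching the resulting limiting covariances against the stated formulas for $\Cov(\hat{L}_1,\hat{L}_1)$, $\Cov(\hat{M}_A,\hat{L}_1)$, $\Cov(\hat{E}_1,\hat{L}_1)$, $\Cov(\hat{L}_1,\hat{I}_1)$, $\Cov(\hat{L}_1,\hat{R}_1)$ and the remaining covariances from the Remark following Theorem~\ref{thm-FCLT-SEIR}, which are precisely the $W_H$--measures of the corresponding rectangles, completes this step. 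Finally, continuity of the limits follows from a fourth--moment bound as in \eqref{E-hatR1-4th}: using the Gaussian property, $\E\big[(\hat{R}_1(t+\delta)-\hat{R}_1(t))^4\big] = 3\big(\E[(\hat{R}_1(t+\delta)-\hat{R}_1(t))^2]\big)^2$, and the second moment, equal to $\lambda\int_t^{t+\delta}\Phi(t+\delta-s)\bar{S}(s)\bar{I}(s)\,ds+\lambda\int_0^t(\Phi(t+\delta-s)-\Phi(t-s))\bar{S}(s)\bar{I}(s)\,ds$, is $O(\delta)$ by the estimate leading to \eqref{deltaVn-bound-p3} applied to $\Phi$, splitting the increment of $\Phi$ into an increment of $G$ plus an increment of the conditional c.d.f.\ as in the proof of Lemma~\ref{lem-hat-tilde-diff-SEIR}; the same holds for $\hat{M}_A$, $\hat{E}_1$, $\hat{L}_1$ and $\hat{I}_1$. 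Together with tightness this yields the joint convergence.

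The main obstacle will be the combinatorial bookkeeping in the finite--dimensional--distribution step: the five coordinates of the vector live on genuinely different subsets of the three--dimensional space $[0,T]\times\RR_+\times\RR_+$, so one must set up the common disjoint refinement over all the time points in the vector, keep track of the signs coming from $\widetilde{M}$, and verify that the limiting covariance matrix is exactly the one induced by $W_H$ (including the cross--covariances involving $\hat{I}_1$ and $\hat{R}_1$, which feature the conditional c.d.f.'s $F(\cdot|y)$). The martingale--based tightness and the fourth--moment continuity estimate are routine adaptations of the SIR arguments, the only new point in the latter being the handling of increments of $\Phi$, which split exactly as in Lemma~\ref{lem-hat-tilde-diff-SEIR}.
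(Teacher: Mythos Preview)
Your proposal is correct and follows essentially the same approach as the paper's proof: reduce to the tilde--processes via Lemma~\ref{lem-hat-tilde-diff-SEIR}, obtain tightness from the martingale property of $\widetilde{M}_A^n$, $\widetilde{L}_1^n$, $\widetilde{R}_1^n$ (and the differences \eqref{eqn-tildeMEL}, \eqref{eqn-tildeEIR}), identify the finite--dimensional distributions by the characteristic--function computation over disjoint rectangles exactly as in \eqref{eqn-char-SEIR}, and check continuity of the limits via the Gaussian fourth--moment bound with the $\Phi$--increment split as in \eqref{3stars}. Your description of the ``common refinement into rectangles'' is precisely what the paper's formula \eqref{eqn-char-SEIR} encodes.
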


\begin{proof} In view of Lemma \ref{lem-hat-tilde-diff-SEIR}, it suffices to prove that
\begin{equation} \label{wt-MARI1-conv}
(\widetilde{M}_A^n,  \widetilde{E}_1^n, \widetilde{L}_1^n, \widetilde{I}_1^n, \widetilde{R}^n_1)  \RA (\hat{M}_A,  \hat{E}_1, \hat{L}_1, \hat{I}_1,\hat{R}_1)  \qinq D^5 \qasq n \to\infty.
\end{equation}
Using the martingale property of   $\widetilde{M}^n_A$,  $\widetilde{L}^n_1$ and $ \widetilde{R}^n_1$, we establish tightness of each of these processes in $D$. 
Moreover each of the possible limit being continuous, the differences $\widetilde{I}^n_1(t)  =
   \widetilde{L}^n_1(t) -\widetilde{R}^n_1(t)$, and  $\widetilde{E}^n_1(t)  =
   \widetilde{M}^n_A(t) -\widetilde{L}^n_1(t)$ are tight. 
  Lemma \ref{lem-hat-tilde-diff-SEIR} now implies that $\{\hat{M}^n_A\}$,  $\{\hat{E}^n_A\}$, , $\{\hat{I}_1^n\}$, and $\{ \hat{R}^n_1\}$ are tight. 
 We next show \eqref{wt-MARI1-conv} by proving  (i) convergence of finite dimensional distributions of  $(\widetilde{M}_A^n,  \widetilde{E}_1^n, \widetilde{L}_1^n, \widetilde{I}_1^n, \widetilde{R}^n_1) $ and (ii) the limits are continuous. 

To prove the convergence of finite dimensional distributions, by the independence of the restrictions of a PRM to disjoint subsets, 
 it suffices to show that for $0 \le t' \le t$, $0 \le a \le b < \infty$ and $0 \le c \le d < \infty$, 
\begin{align}\label{eqn-char-SEIR}
& \lim_{n\to\infty} \E\left[ \exp \left(i \frac{\vartheta}{\sqrt{n}} \int_{t'}^t \int_a^b \int_c^d\int_0^\infty  \widetilde{\varphi}^n(s)  \widetilde{M}(ds, dy, dz, du)\right)\right]  \non\\
& = \exp \left( -\frac{\vartheta^2}{2} \lambda \int_{t'}^t \left( \int_a^b \int_c^d \tilde{H}_s(dy,dz) \right) \bar{S}(s) \bar{I}(s) ds \right) \,,
\end{align}
where 
$$
 \int_a^b \int_c^d \tilde{H}_s(dy,dz) = \int_{a-s}^{b-s} (F(d-y-s|y) - F(c-y-s|y)) G(dy). 
$$
By \eqref{eqn-PRM-char},  the left hand side of \eqref{eqn-char-SEIR} is equal to 
\begin{align}
& \exp\left( - i \frac{\vartheta}{\sqrt{n}} \int_{t'}^t \left( \int_a^b \int_c^d \tilde{H}_s(dy,dz) \right)  \lambda n \bar{S}(s) \bar{I}(s) d s  \right) \non\\
& \times \exp\left( (e^{i \vartheta/\sqrt{n}}-1) \int_{t'}^t   \left( \int_a^b \int_c^d \tilde{H}_s(dy,dz) \right) \lambda n \bar{S}(s) \bar{I}(s) d s\right). \non
\end{align}
Then the claim in \eqref{eqn-char-SIR} is immediate by applying Taylor expansion. 

We next show that there exists a continuous version of the limit processes $\hat{M}_A$, $\hat{E}_1$, $\hat{I}_1$ and $\hat{R}_1$ in $C$. Taking  $\hat{R}_1$ as an example, we need to show \eqref{E-hatR1-4th} holds. 
By  \eqref{eqn-char-SEIR},  we have 
\begin{align}
&\E\left[( \hat{R}_1(t+\delta) -  \hat{R}_1(t)))^4 \right] = 3 \left(E\left[( \hat{R}_1(t+\delta) -  \hat{R}_1(t)))^2 \right]\right)^2 \non\\
& = 3 \left( \lambda\int_t^{t+\delta} \Phi(t+\delta -s) \bar{S}(s) \bar{I}(s) ds + \lambda \int_0^t (\Phi(t+\delta -s) - \Phi(t-s)) \bar{S}(s) \bar{I}(s) ds  \right)^2   \non\\
& \le 6 \lambda \delta^2 + 6 \lambda \left( \int_0^t (\Phi(t+\delta -s) - \Phi(t-s)) \bar{S}(s) \bar{I}(s) ds\right)^2. \non
\end{align}
This implies that \eqref{E-hatR1-4th} holds, see the computations for the proof of \eqref{3stars} above. 
This completes the proof. \end{proof}

\smallskip
\noindent \emph{Completing the proof of Theorem \ref{thm-FCLT-SIR}.}
By Lemmas \ref{lem-hatInitials-conv-SEIR} and \ref{lem-hatMARI1-conv-SEIR}, 
 we first obtain the joint convergence 
 \begin{align}
 &\Big(-\hat{I}^n(0)-\hat{M}_A^n,  \hat{E}^n(0) G_0^c(\cdot)  + \hat{E}_0^n  + \hat{E}_1^n, 
  \hat{I}^n(0) F^c_0(\cdot) +  \hat{E}^n(0) \Psi_0(\cdot)   +  \hat{I}^n_{0,1} + \hat{I}^n_{0,2} +  \hat{I}^n_{1}, \non\\
  & \qquad  \hat{I}^n(0) F_0(\cdot) +  \hat{E}^n(0) \Phi_0(\cdot) +  \hat{R}^n_{0,1}  + \hat{R}^n_{0,2} +  \hat{R}^n_{1} \Big) \non\\
  & \RA \Big(-\hat{I}(0)-\hat{M}_A,  \hat{E}(0) G_0^c(\cdot)  + \hat{E}_0  + \hat{E}_1, 
  \hat{I}(0) F^c_0(\cdot) +  \hat{E}(0) \Psi_0(\cdot)   +  \hat{I}_{0,1} + \hat{I}_{0,2} +  \hat{I}_{1}, \non\\
  & \qquad  \hat{I}(0) F_0(\cdot) +  \hat{E}(0) \Phi_0(\cdot) +  \hat{R}_{0,1}  + \hat{R}_{0,2} +  \hat{R}_{1} \Big) \non
 \end{align}
 in $D^4$ as $n\to\infty$. 
Then by Lemma \ref{lem-Gamma-cont} and  the continuous mapping theorem, we obtain \eqref{eqn-FCLT-conv-SEIR}.  \hfill $\Box$

As a consequence of the above proof, we also obtain the convergence $\hat{L}^n\RA \hat{L}$ in $D$ as $n\to\infty$, jointly with the processes in \eqref{eqn-FCLT-conv-SEIR}, where
\begin{align}
\hat{L}(t)  &=  \hat{E}(0) G_0(t)  + \hat{L}_0(t)  + \hat{L}_1(t)   +  \lambda  \int_0^t G(t-s) \left( \hat{S}(s) \bar{I}(s) + \bar{S}(s) \hat{I}(s) \right) ds,  \quad t\ge 0. \non 
\end{align}

\medskip

\section{Appendix}
\subsection{A system of two linear Volterra integral equations}

Define the mapping $\Gamma: (a, x,y,z) \to (\phi,\psi)$  by the integral equations:
\begin{align}\label{eqn-phi-psi}
\phi(t) &= a + x(t) + c \int_0^t (\phi(s) z(s) + w(s) \psi(s) )ds,  \non\\
\psi(t) &=  y(t) + c \int_0^t K(t-s)(\phi(s) z(s) + w(s) \psi(s) )ds, 
\end{align}
where $(a, x,y,z)  \in \RR\times D^3$, and $c>0$ and $w\in C$. (Here $c$ and $w$ are given and fixed.) 
We study the existence and uniqueness of its solution and the continuity property in the Skorohod $J_1$ topology.

\begin{lemma}\label{lem-Gamma-cont}
Assume that $K(0) =0$ and $K(\cdot)$ is measurable,  bounded and continuous, and let $c>0$ and $w\in C$ be given.  
There exists a unique solution $(\phi,\psi) \in D^2$ to the integral equations \eqref{eqn-phi-psi}. 
The mapping $\Gamma$ is continuous in the Skorohod topology, that is, if  $a^n\to a$ in $\RR$ and $(x^n,y^n, z^n) \to (x, y, z)$ in $D^3$ as $n\to\infty$ with $(x,z) \in C^2$ and $y\in D$, then $(\phi^n,\psi^n) \to (\phi,\psi)$ in $D^2$ as $n\to\infty$. 
In addition, if $y \in C$, then $(\phi,\psi) \in C^2$, and the mapping is continuous uniformly on compact sets in $[0,T]$. 
\end{lemma}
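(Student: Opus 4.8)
The plan is to treat \eqref{eqn-phi-psi}, for fixed data $(a,x,y,z)$, as a fixed-point equation $(\phi,\psi)=\Theta(\phi,\psi)$ on the Banach space $D^2=D([0,T],\RR^2)$ equipped with the uniform norm $\|(\phi,\psi)\|:=\|\phi\|_T+\|\psi\|_T$, where $\Theta$ is given by the right-hand sides of \eqref{eqn-phi-psi}. Since a c\`adl\`ag function on $[0,T]$ is bounded, $w\in C$ is bounded, and $K$ is bounded, the map $t\mapsto\int_0^t(\phi z+w\psi)\,ds$ is Lipschitz and $t\mapsto\int_0^t K(t-s)(\phi z+w\psi)\,ds$ is continuous (by uniform continuity of $K$ on $[0,T]$ together with dominated convergence), so $\Theta$ maps $D^2$ into itself. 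A routine induction shows the $m$-th iterate $\Theta^m$ has Lipschitz constant at most $(LT)^m/m!$ with $L$ depending only on $c,\|w\|_T,\|z\|_T,\|K\|_T$, so some iterate is a contraction; this gives the unique solution $(\phi,\psi)\in D^2$. The representations $\phi=a+x+(\text{continuous})$ and $\psi=y+(\text{continuous})$ then follow at once, and in particular yield $(\phi,\psi)\in C^2$ whenever $x,y\in C$.

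For the continuity statement, suppose $a^n\to a$ and $(x^n,y^n,z^n)\to(x,y,z)$ in $D^3$ with $(x,z)\in C^2$ and $y\in D$. Because $x$ and $z$ are continuous, $x^n\to x$ and $z^n\to z$ uniformly on $[0,T]$, so $\sup_n\|z^n\|_T<\infty$. First I would run Gronwall on $\|\phi^n\|_T+\|\psi^n\|_T$ to get a uniform bound $M:=\sup_n(\|\phi^n\|_T+\|\psi^n\|_T)<\infty$. The main idea is then to peel off the discontinuous part: set $\widetilde\psi^n(t):=\psi^n(t)-y^n(t)=c\int_0^t K(t-s)(\phi^n z^n+w\psi^n)\,ds$ and $\widetilde\psi:=\psi-y$, both continuous. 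The crucial observation is that, although $y^n\to y$ only in $J_1$ (so $y^n-y$ need not be small in sup norm), one still has $\sup_{t\le T}\big|\int_0^t w(s)(y^n(s)-y(s))\,ds\big|\to0$ and $\sup_{t\le T}\big|\int_0^t K(t-s)w(s)(y^n(s)-y(s))\,ds\big|\to0$: indeed $y^n(s)\to y(s)$ off a countable set and $\sup_n\|y^n\|_T<\infty$, so these integrals tend to $0$ pointwise by dominated convergence, while uniform continuity of $K$ makes the family equicontinuous in $t$, upgrading this to uniform convergence.

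Using $\psi^n-\psi=(y^n-y)+(\widetilde\psi^n-\widetilde\psi)$, $z^n=z+(z^n-z)$, etc., and subtracting the equations \eqref{eqn-phi-psi} for the $n$-th data from those for the limit, the contributions of $a^n-a$, $\|x^n-x\|_T$, $\|z^n-z\|_T$ (times $M$) and the two vanishing averages above all collect into a quantity $\varepsilon_n\to0$, leaving the coupled bound
\begin{equation*}
|\phi^n(t)-\phi(t)|+|\widetilde\psi^n(t)-\widetilde\psi(t)|\le\varepsilon_n+C\int_0^t\big(|\phi^n(s)-\phi(s)|+|\widetilde\psi^n(s)-\widetilde\psi(s)|\big)\,ds,
\end{equation*}
with $C$ depending only on $c,\|w\|_T,\|K\|_T$ and $\sup_n\|z^n\|_T$. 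Gronwall's inequality then gives $\phi^n\to\phi$ and $\widetilde\psi^n\to\widetilde\psi$ uniformly on $[0,T]$. Finally, to conclude $\psi^n=y^n+\widetilde\psi^n\to y+\widetilde\psi=\psi$ in $D$, I would take time changes $\mu_n\to e$ uniformly with $\|y^n\circ\mu_n-y\|_T\to0$ and note that $\widetilde\psi^n\circ\mu_n\to\widetilde\psi$ uniformly as well (since $\widetilde\psi^n\to\widetilde\psi$ uniformly and the limit $\widetilde\psi$ is continuous), hence $\psi^n\circ\mu_n\to\psi$ uniformly; together with $\phi^n\to\phi$ uniformly this is convergence in $D^2$. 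When moreover $y\in C$, then $\psi\in C$, and $J_1$-convergence to a continuous limit is uniform convergence, so $\|\phi^n-\phi\|_T+\|\psi^n-\psi\|_T\to0$, which is the asserted uniform-on-compacts continuity.

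The step I expect to be the main obstacle is exactly this continuity argument in the regime where $y$ (hence $\psi$) is discontinuous: since $\psi^n$ reappears inside the integral operators but converges only in the Skorohod sense, a naive sup-norm Gronwall fails, and addition is not $J_1$-continuous in general. The resolution --- isolating $y^n$ as the carrier of the moving jumps, proving that the remaining convolution part $\widetilde\psi^n$ converges \emph{uniformly}, and checking that the $y^n$-averages $\int_0^t w(y^n-y)\,ds$ and $\int_0^t K(t-\cdot)w(y^n-y)\,ds$ still vanish uniformly despite $y^n-y$ not being small --- is the heart of the proof. The existence/uniqueness part and the $C^2$ statement are routine by comparison.
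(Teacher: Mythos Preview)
Your proposal is correct and takes a genuinely different route from the paper's proof. For existence and uniqueness, the paper simply cites Miller's monograph for the continuous case and then invokes the Schauder--Tychonoff fixed point theorem to extend to $D$; your direct Picard iteration in $(D,\|\cdot\|_T)$ is more self-contained and arguably cleaner (and it is worth noting, since you rely on it, that $D([0,T],\RR)$ with the sup norm is indeed complete, as uniform limits of c\`adl\`ag functions are c\`adl\`ag).

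The more substantial divergence is in the continuity argument. The paper works directly with a single time change: since $(x,z)\in C^2$, convergence in the product $J_1$ topology upgrades to convergence in $D([0,T],\RR^3)$, so one may choose absolutely continuous homeomorphisms $\lambda^n$ with $\|\lambda^n-e\|_T\to0$ and $\|\dot\lambda^n-1\|_T\to0$ that synchronize all three inputs. The paper then bounds $|\phi^n(t)-\phi(\lambda^n(t))|+|\psi^n(t)-\psi(\lambda^n(t))|$ by changing variables $s\mapsto\lambda^n(s)$ in the limiting integrals (this is where $\dot\lambda^n$ enters) and applying Gronwall. Your approach instead peels off the discontinuous carrier $y^n$ from $\psi^n$, proves that the continuous remainder $\widetilde\psi^n$ converges \emph{uniformly} via Gronwall, and handles the $y^n$ contribution separately by showing that the averages $\int_0^t w(y^n-y)\,ds$ and $\int_0^t K(t-s)w(y^n-y)\,ds$ vanish uniformly (pointwise by dominated convergence, then uniformly by equicontinuity). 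What your method buys is that it never needs the absolute continuity of the time changes, which the paper asserts without proof; what the paper's method buys is that it treats $\phi^n$ and $\psi^n$ symmetrically and avoids the separate equicontinuity argument for the $K$-convolution of $y^n-y$. Both are standard strategies for Skorohod continuity of Volterra-type maps, and either would serve here.
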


\begin{proof}

By Theorems 1.2 and 2.3 in Chapter II of \cite{miller1971},  if $x,y \in C$,  we have  existence and uniqueness of a solution $(\phi,\psi) \in C^2$ to the integral equations \eqref{eqn-phi-psi}. 
The proof can be easily extended to the case where $x,y \in D$ by applying the Schauder-Tychonoff fixed point theorem.

We next show the continuity in the Skorohod $J_1$ topology. Note that the functions in $D$ are necessarily bounded.  For the given $(x,z) \in C^2$ and $y\in D$, let the interval right end point $T$ be a continuity point of $y$. 
Since $(x,z) \in C^2$,  the convergence $(x^n,y^n, z^n) \to (x, y, z)$ in $D^3$
 in the product $J_1$ topology is equivalent to convergence $(x^n,y^n, z^n) \to (x, y, z)$ in $D([0,T], \RR^3)$  in the strong $J_1$ topology. 
 Then there exist increasing homeomorphisms $\lambda^n$ on $[0,T]$ such that $\|\lambda^n-e\|_T \to 0$, $\|x^n- x\circ \lambda^n\|_T \to 0$, $\|y^n- y\circ \lambda^n\|_T \to 0$, and $\|z^n- z\circ \lambda^n\|_T \to 0$,  as $n \to \infty$. Here $e(t) := t$ for all $t\ge 0$. 
 Moreover, it suffices to consider homeomorphisms $\lambda^n$ that are absolutely continuous with resect to the Lebesgue measure on $[0,T]$ having derivatives $\dot{\lambda}^n$ satisfying $\| \dot{\lambda}^n -1 \|_T \to 0$ as $n\to \infty$. Let $\sup_{t \in [0,T]}|K(t)| \le c_K$. 
 
 We have 
 \begin{align}
 & |\phi^n(t) - \phi(\lambda^n(t))|  \non \\
 & \le |a^n-a| + \|x^n- x\circ \lambda^n\|_T   + c \left| \int_0^t (\phi^n(s) z^n(s) + w(s) \psi^n(s) )d s - \int_0^{\lambda^n(t)} (\phi(s) z(s) + w(s) \psi(s) )d s \right| \non\\
 & \le |a^n-a| + \|x^n- x\circ \lambda^n\|_T  +   c \Bigg| \int_0^t (\phi^n(s) z^n(s) + w(s) \psi^n(s) )d s  \non\\
 & \qquad\qquad \qquad \qquad\qquad \qquad \qquad  - \int_0^{t} (\phi(\lambda^n(s)) z(\lambda^n(s)) + w(\lambda^n(s)) \psi(\lambda^n(s)) ) \dot{ \lambda}^n(s) d s \Bigg| \non\\
& \le |a^n-a| + \|x^n- x\circ \lambda^n\|_T  + c \| \dot{\lambda}^n -1 \|_T \int_0^T  |\phi(s) z(s) + w(s) \psi(s) | d s \non \\
 & \quad +  c\int_0^t \Big(  |\phi^n(s) - \phi(\lambda^n(s))| |z^n(s)| + |\phi(\lambda^n(s))| |z^n(s) - z(\lambda^n(s))| 
 \non\\
 & \qquad \qquad \qquad + |w(s) - w(\lambda^n(s))| |\psi^n(s)| + |w(\lambda^n(s))| |\psi^n(s) - \psi(\lambda^n(s))| \Big) ds \non 
 \end{align}
 and similarly,
 \begin{align}
 &  |\psi^n(t) - \psi(\lambda^n(t))|  \non\\
 & \le \|y^n- y\circ \lambda^n\|_T   + c \Bigg| \int_0^t  K(t-s)(\phi^n(s) z^n(s) + w(s) \psi^n(s) )ds \non\\
  & \qquad \qquad \qquad\qquad \qquad   - \int_0^{\lambda^n(t)} K(\lambda^n(t)-s) (\phi(s) z(s) + w(s) \psi(s) )d s \Bigg| \non \\
  & \le  \|y^n- y\circ \lambda^n\|_T  + c \times c_K \| \dot{\lambda}^n -1 \|_T \int_0^T  |\phi(s) z(s) + w(s) \psi(s) | d s  \non\\
  & \quad +  c \times c_K \int_0^t \Big(  |\phi^n(s) - \phi(\lambda^n(s))| |z^n(s)| + |\phi(\lambda^n(s))| |z^n(s) - z(\lambda^n(s))| 
 \non\\
 & \qquad \qquad \qquad \qquad  + |w(s) - w(\lambda^n(s))| |\psi^n(s)| + |w(\lambda^n(s))| |\psi^n(s) - \psi(\lambda^n(s))| \Big) ds  \non\\
 & \quad +  c \int_0^t \big|K(t-s)- K(\lambda^n(t) -\lambda^n(s))\big| (\phi^n(s) z^n(s) + w(s) \psi^n(s) )ds.  \non
 \end{align}
By first applying Gronwall's inequality and then using the convergence of $a^n\to a$ in $\RR$ and $(x^n,y^n, z^n) \to (x, y, z)$ in $D^3$, and $w\in C$,  we obtain 
$$
 \|\phi^n - \phi\circ\lambda^n\|_T +  \|\psi^n - \psi\circ\lambda^n\|_T \to 0 \qasq n \to \infty. 
$$
This completes the proof of the continuity property in the Skorohod $J_1$ topology. If $y\in C$, the continuity property is straightforward.
\end{proof}

\subsection{Proof of Proposition~\ref{prop-SIS-equiv-M}} \label{sec-proof-equivalence}

\begin{proof}[Proof of Proposition~\ref{prop-SIS-equiv-M}]
Recall that the unique solution of the linear differential equation: $x(t) = x(0) + a\int_0^t x(s)ds + y(t)$ with $y(0)=0$, is given by the formula $x(t) = e^{at} x(0) + \int_0^t a e^{a(t-s)}y(s)ds + y(t)$, for $t\ge0$, and if $y \in C^1$, we have $x(t) = e^{at} x(0) + \int_0^t e^{a(t-s)}\dot{y}(s)ds$. 

Let $X_1(t) = \hat{I}(0) e^{-\mu t} $. 
We have 
\begin{equation} \label{eqn-X1-SIS-M}
X_1(t) = -\mu \int_0^t X_1(s) ds +  \hat{I}(0). 
\end{equation}
Let 
$$X_2(t) = \lambda \int_0^t e^{-\mu(t-s)}  (1- 2 \bar{I}(s))   \hat{I}(s) ds. $$
We have 
\begin{equation} \label{eqn-X2-SIS-M}
 X_2(t) = - \mu\int_0^t X_2(s)ds  + \lambda \int_0^t   (1- 2 \bar{I}(s))   \hat{I}(s) ds. 
\end{equation}
 For $\hat{I}_0(t)$, its covariance is 
 $$
\Cov(\hat{I}_{0}(t), \hat{I}_{0}(t')) = \bar{I}(0) (e^{-\mu(t\vee t')} - e^{-\mu t} e^{-\mu t'}) , \quad t, t' \ge 0. 
$$
It is easy to verify that 
\begin{equation} \label{eqn-I0-SIS-M}
\hat{I}_0(t) = -\mu \int_0^{t} \hat{I}_0(s) ds +  W_0(t)
\end{equation}
where $W_0(t)  =\bar{I}(0)^{1/2}  B_0(1-e^{-\mu t})$ for  a standard Brownian motion $B_0$. 
We can represent $W_0(t) = \bar{I}(0)^{1/2} \int_0^t \sqrt{\mu e^{-\mu s}} d \tilde B_0(s)$ for another Brownian motion $B_0$, and thus write 
$$
\hat{I}_0(t) = \bar{I}(0)^{1/2}\int_0^t e^{-\mu(t-s)} \sqrt{\mu e^{-\mu s}} d \tilde{B}_0(s), \quad t \ge 0,
$$
which gives the same covariance as above by It{\^o}'s isometry property. 

For $\hat{I}_1$, its covariance is 
\begin{equation} \label{SIS-cov-hatI1-M}
\Cov(\hat{I}_{1}(t), \hat{I}_{1}(t')) =  \lambda \int_0^{t\wedge t'} e^{-\mu(t\vee t'-s)}  (1- \bar{I}(s))  \bar{I}(s) ds, \quad t, t'\ge 0.
\end{equation}
We next show that 
\begin{equation} \label{eqn-I1-SIS-M}
\hat{I}_1(t) = -\mu \int_0^t \hat{I}_1(s) ds + W_1(t)
\end{equation}
where $W_1(t)$ is a  continuous Gaussian process, independent of $W_0(t)$, with the covariance function 
$$
\Cov(W_1(t), W_1(t')) = \int_0^{t\wedge t'} \theta(r) dr
$$
where 
$$
\theta(r) :=  \lambda  (1- \bar{I}(r))  \bar{I}(r) +  \mu \bar{I}(r)  -   \bar{I}(0) \mu e^{-\mu r}. 
$$
We have
$$
\hat{I}_1(t) = -\mu \int_0^t e^{-\mu (t-s)} W_1(s) d s + W_1(t), \quad t\ge 0.
$$
We compute the covariance $\Cov(\hat{I}_1(t), \hat{I}_1(s)) $ using this expression: for $t>s$, 
\begin{align}
\Cov(\hat{I}_1(t), \hat{I}_1(s)) 
&= \E\left[ W_1(t) W_1(s) \right] -\mu \E\left[ W_1(t) \int_0^s e^{-\mu (s-r)}  W_1(r) dr \right]   \non\\
& \qquad \qquad
-\mu \E\left[ W_1(s) \int_0^t e^{-\mu (t-r)} W_1(r) d r \right]  \non\\
& \qquad \qquad  + \mu^2 \E\left[ \int_0^t  \left( \int_0^s e^{-\mu (t-r)}   e^{-\mu (s-r')} W_1(r) W_1(r') d r' \right) d r ) \right].\non
\end{align}
The first term is 
$$
\E\left[ W_1(t) W_1(s) \right]  = \int_0^s \theta(u) du.
$$
The second term is 
\begin{align}
 -\mu\int_0^s e^{-\mu (s-r)} \E\left[ W_1(t) W_1(r) \right]  dr  &= -\mu   \int_0^s e^{-\mu (s-r)} \left(\int_0^r \theta(u) du\right)  dr   = -\int_0^s (1-e^{-\mu(s-r)})  \theta(r)dr.  \non
\end{align}
The third term is
\begin{align}
 -\mu  \int_0^t e^{-\mu (t-r)} \E\left[ W_1(s)  W_1(r)  \right] d r  
&  = -\mu\int_0^s e^{-\mu (t-r)} \left(  \int_0^r \theta(u) du \right) dr - \mu \int_s^t e^{-\mu (t-r)} \left( \int_0^s  \theta(u) du \right) dr \non \\
& = - e^{-\mu (t-s)} \int_0^s  (1-e^{-\mu(s-r)}) \theta(r)dr - (1-e^{-\mu (t-s)}) \int_0^s \theta(u)du \non\\
& = - \int_0^s (1- e^{-\mu (t-r)})\theta(r) dr. \non
\end{align}
The fourth term is
\begin{align}
& \mu^2 \int_0^t\left( \int_0^s e^{-\mu (t-r)} e^{-\mu (s-r')} \E[W_1(r) W_1(r')] dr' \right) dr \non \\
& =  \mu^2 \int_s^t\left( \int_0^s e^{-\mu (t-r)} e^{-\mu (s-r')} \E[W_1(r) W_1(r')] dr' \right) dr \non \\
& \quad + \mu^2 \int_0^s\left( \int_0^s e^{-\mu (t-r)} e^{-\mu (s-r')} \E[W_1(r) W_1(r')] dr' \right) dr \non \\
& =  \mu^2 \int_s^t\left( \int_0^s e^{-\mu (t-r)} e^{-\mu (s-r')} \left( \int_0^{r'} \theta(u) du \right) dr' \right) dr \non \\
& \quad +2 \mu^2 \int_0^s\left( \int_0^r e^{-\mu (t-r)} e^{-\mu (s-r')} \left( \int_0^{r'} \theta(u) du \right) dr' \right) dr  \non \\
&= (1-e^{-\mu(t-s)}) \int_0^s (1-e^{-\mu(s-r)}) \theta(r)d r \non\\
& \quad + e^{-\mu (t-s)}  \int_0^s  ( 1- 2 e^{-\mu(s-r)} + e^{-2\mu (s-r)})\theta(r)  dr \non \\
& = \int_0^s  (1- e^{-\mu(s-r)} - e^{-\mu (t-r)} + e^{-\mu (t-r) - \mu (s-r)}) \theta(r) dr. \non
\end{align}
Combining the four terms, we obtain
\begin{align}
e^{-\mu (t-s)} \int_0^s e^{-2\mu (s-r)} \theta(r) dr. \non
\end{align}
Now we check that  this is equal to the covariance of $\hat{I}_1(t)$ in \eqref{SIS-cov-hatI1-M}. Taking the difference
between the last expression and the right--hand side of \eqref{SIS-cov-hatI1-M} with $t'=s<t$, we obtain
\begin{align} \label{SIS-equiv-diff}
& e^{-\mu (t-s)} \int_0^s e^{-2\mu (s-r)} \theta(r) dr -  \lambda \int_0^{s} e^{-\mu(t-r)}  (1- \bar{I}(r))  \bar{I}(r) dr  \non \\
& = e^{-\mu (t-s)} \int_0^s e^{-2\mu (s-r)} ( \lambda  (1- \bar{I}(r))  \bar{I}(r) +  \mu \bar{I}(r) ) dr -  \lambda e^{-\mu(t-s)}\int_0^{s} e^{-\mu(s-r)}  (1- \bar{I}(r))  \bar{I}(r) dr   \non\\
& \qquad -  e^{-\mu (t-s)} \int_0^s e^{-2\mu (s-r)} \bar{I}(0) \mu e^{-\mu r} dr. 
\end{align}

Observe that the fluid equation for $I(t)$ can be written as 
$$
\bar{I}'(t) = - \mu \bar{I}(t) + \lambda \bar{I}(t) (1-\bar{I}(t)), 
$$ 
and
$$
\bar{I}'(t) = -2 \mu \bar{I}(t) + \lambda \bar{I}(t) (1-\bar{I}(t)) + \mu \bar{I}(t).
$$ 
These two equations give the following representations of $\bar{I}(t)$: 
$$
\bar{I}(t) = \bar{I}(0) e^{-\mu s} + \lambda\int_0^s e^{-\mu (s-r)} \bar{I}(r) (1- \bar{I}(r)) dr, 
$$
and
$$
\bar{I}(t) = \bar{I}(0) e^{-2\mu s} + \int_0^s e^{-2\mu (s-r)} \left(\lambda\bar{I}(r) (1- \bar{I}(r)) + \mu \bar{I}(r)\right) dr. 
$$
Also notice that $ \int_0^s e^{-2\mu (s-r)} \mu e^{-\mu r} dr  = e^{-\mu s}- e^{-2\mu s}$. Using these equations, we verify that \eqref{SIS-equiv-diff} is equal to zero, and thus the equation for $\hat{I}_1$ in \eqref{eqn-I1-SIS-M} is established. 
Therefore, by combining \eqref{eqn-X1-SIS-M}, \eqref{eqn-X2-SIS-M} \eqref{eqn-I0-SIS-M} and \eqref{eqn-I1-SIS-M}, we obtain the equivalence of the non-Markovian and Markovian representations of $\hat{I}$ for the SIS model.
\end{proof}

\paragraph{\bf Acknowledgement}
This work was mostly done during G. Pang's visit at Aix--Marseille Universit{\'e}, whose hospitality was greatly appreciated. 
G. Pang was supported in part by  the US National Science Foundation grants DMS-1715875 and DMS-2108683, and Army Research Office grant W911NF-17-1-0019. 
The authors thank the reviewers for the helpful comments that  have improved the exposition of the paper.

\bibliographystyle{plain}

\bibliography{SIRSEIR}

\end{document}